\documentclass{amsart}
\usepackage{latexsym,enumerate}
\usepackage{amsmath,amsthm,amsopn,amstext,amscd,amsfonts,amssymb}
\usepackage[ansinew]{inputenc}
\usepackage{verbatim}
\usepackage{graphicx}
\usepackage{pstricks}
\usepackage{wasysym}
\usepackage[all]{xy}
\usepackage{epsfig} 
\usepackage{graphicx,psfrag}
\usepackage{subfigure}
\usepackage{pstricks,pst-node}
\usepackage{multicol}
\usepackage{color}
\usepackage{colortbl}
\setlength{\oddsidemargin}{.2cm} \setlength{\evensidemargin}{.2cm}
\textwidth=16.5cm \textheight=21cm 

\newcommand{\NN}{\mathbb{N}}

\newcommand{\RR}{\mathbb{R}}
\newcommand{\ZZ}{\mathbb{Z}}

\newtheorem{theorem}{Theorem}[section]
\newtheorem{lemma}[theorem]{Lemma}
\newtheorem{proposition}[theorem]{Proposition}
\newtheorem{corollary}[theorem]{Corollary}
\newtheorem{definition}[theorem]{Definition}
\newtheorem{example}[theorem]{Example}
\newtheorem{remark}[theorem]{Remark}
\DeclareMathOperator{\diam}{diam}

\newcommand{\spb}[1]{\smallskip}
\newcommand{\mpb}[1]{\medskip}
\newcommand{\bpb}[1]{\bigskip}

\begin{document}
	\DeclareGraphicsExtensions{.jpg,.pdf,.mps,.png}
	\title{Vertex separators, chordality and virtually free groups}
	\author[Samuel G. Corregidor]{Samuel G. Corregidor}
	\address{ Facultad CC. Matem\'aticas, Universidad Complutense de Madrid,
		Plaza de Ciencias, 3. 28040 Madrid, Spain}
	\email{samuguti@ucm.es}
	\author[\'{A}lvaro Mart\'{\i}nez-P\'erez]{\'{A}lvaro Mart\'{\i}nez-P\'erez$^{(1)}$}
	\address{ Facultad CC. Sociales, Universidad de Castilla-La Mancha,
		Avda. Real F\'abrica de Seda, s/n. 45600 Talavera de la Reina, Toledo, Spain}
	\email{alvaro.martinezperez@uclm.es}
	\thanks{$^{(1)}$ Supported in part by a grant
		from Ministerio de Ciencia, Innovaci\'on y Universidades (PGC2018-098321-B-I00), Spain.
	}
	\date{\today}
	\begin{abstract} In this paper we consider some results obtained for graphs using minimal vertex separators and generalized chordality and translate them to the context of Geometric Group Theory.	
		Using these new tools, we are able to give two new characterizations for a group to be virtually free. Furthermore, we  prove that the Baumslag-Solitar group $BS(1,n)$ is $k$-chordal for some $k$ if and only if $|n|<3$ and we give an application of generalized chordality to the study of the word problem.
	\end{abstract}
	\maketitle{}
	{\it Keywords: vertex separator, chordality, hyperbolicity, quasi-isometry, tree, virtually free.} 
	{\it 2020 AMS Subject Classification Numbers. Primary: 20F65. Secondary: 20F10, 05C25} 
	
	\section{Introduction}


	In \cite{M2} the second author studied some relations between vertex separator sets, certain
	chordality properties that generalize being chordal and conditions for a graph to be quasi-isometric to a tree. Some of these ideas can be easily translated to the language of Geometric Group Theory. Moreover, some of the definitions can be re-written in new terms given a group presentation.
	
	Thus, we introduce herein new tools in the context of Geometric Group Theory which can be used from either a more geometric or more algebraic approach. 
	
	Let $\Gamma$ be a connected graph, we will understand it as a geodesical metric space. A subset $P\subseteq V(\Gamma)$ is an \emph{$(a,b)$-separator} in $\Gamma$ if the vertices $a$ and $b$ are in different connected components of $\Gamma\smallsetminus P$.  
	
	A graph $\Gamma$ satisfies the \emph{bottleneck property} (BP) if there exists some constant $\Delta > 0$ so that given any two distinct points $x,y \in V(\Gamma) $ and a midpoint $c \in \Gamma $ such that $d_\Gamma(x,c) = d_\Gamma(y,c) = \frac{1}{2} d_\Gamma(x,y)$, then every $xy$-path intersects $N_\Delta (c)$. This is an equivalent definition to the original which was defined in \cite{Man}, see Proposition 2 in \cite{M2}.
	
	Given $P \subseteq V(\Gamma)$, we denote:
	
	$$ \diam (P) = \sup\{ d_\Gamma (p,q) \,|\, p,q \in P \} $$
	
	Then we have the following result for graphs.
	
	\begin{theorem}[Theorem 7, \cite{M2}]\label{t:separator}Let $\Gamma$ be a uniform graph. If every minimal vertex separator in $\Gamma$ has diameter at most $m$, then $\Gamma$ satisfies (BP) (i.e., $\Gamma$ is quasi-isometric to a tree).
	\end{theorem}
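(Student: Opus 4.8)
The plan is to prove the bottleneck property (BP) directly, producing an explicit bound $\Delta=\Delta(m)$; the clause ``quasi-isometric to a tree'' then comes for free from the equivalence recorded just before the theorem (Proposition 2 in \cite{M2}). Fix distinct vertices $x,y$ and a geodesic midpoint $c$, say $d(x,c)=d(y,c)=r=\tfrac12 d(x,y)$. I would first dispose of a parity nuisance: if $d(x,y)$ is odd then $c$ is the midpoint of an edge, so replace it by one of the two central vertices of a geodesic from $x$ to $y$; this moves $c$ by at most $1$, costing only $+1$ in the final constant. Having reduced to $c\in V(\Gamma)$, fix a geodesic $\alpha=(x=v_0,v_1,\dots,v_{2r}=y)$ with $v_r=c$; then $v_0\cdots v_r$ and $v_r\cdots v_{2r}$ are themselves geodesics, from $x$ to $c$ and from $c$ to $y$.

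Now suppose, for contradiction, that some $xy$-path $\gamma$ avoids $N_n(c)$ for an integer $n$ with $2n>m$. Since $x,y\in\gamma$ avoid $N_n(c)$, this forces $r\ge n+1$, so the segments $v_0,\dots,v_{r-n-1}$ and $v_{r+n+1},\dots,v_{2r}$ of $\alpha$ lie outside $N_n(c)$. Let $D$ be the component of $\Gamma\smallsetminus N_n(c)$ containing $\gamma$ — so $D$ contains $x$, $y$ (and $v_{r-n-1}$, $v_{r+n+1}$) — and let $\partial D$ be the set of vertices outside $D$ adjacent to $D$. The key geometric fact is that $\partial D\subseteq S_n(c)$, the $n$-sphere about $c$: a vertex adjacent to $D$ is at distance $\ge n$ from $c$ (its neighbour in $D$ is at distance $\ge n+1$), and it must lie in $N_n(c)$, since otherwise it would be joined to $D$ inside $\Gamma\smallsetminus N_n(c)$ and hence belong to $D$. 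A routine frontier argument shows $\partial D$ separates $c$ from $D$; in particular $\partial D$ is a $c$-$x$-separator ($c,x$ being non-adjacent, as $d(c,x)=r\ge 2$), so it contains a \emph{minimal} $c$-$x$-separator $R\subseteq\partial D\subseteq S_n(c)$, and by hypothesis $\diam R\le m$.

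The decisive step is to see that in fact $\diam R\ge 2n$. Because $D$ is connected and disjoint from $\partial D$ (hence from $R$), all of $D$ — in particular $x$ \emph{and} $y$ — lies in a single component of $\Gamma\smallsetminus R$, while $c$ lies in a different one; thus $R$ separates $c$ from $x$ and from $y$. Consequently the geodesic $v_r,v_{r-1},\dots,v_0$ from $c$ to $x$ must meet $R$, and since $R\subseteq S_n(c)$ the meeting point can only be the unique vertex of that geodesic at distance $n$ from $c$, namely $v_{r-n}$; the symmetric argument on the geodesic $v_r,v_{r+1},\dots,v_{2r}$ gives $v_{r+n}\in R$. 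As $v_{r-n}$ and $v_{r+n}$ both lie on $\alpha$, we have $d(v_{r-n},v_{r+n})=2n$, whence $\diam R\ge 2n>m$, a contradiction. Hence every $xy$-path meets $N_n(c)$ whenever $2n>m$, so $\Gamma$ satisfies (BP) with, say, $\Delta=\lfloor m/2\rfloor+2$.

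I expect the only genuine obstacle to be choosing the right thing to separate: one should \emph{not} try to bound a minimal $xy$-separator (these can be small and far from $c$ — consider a tree), but instead separate $c$ from $x$ using precisely the frontier of the component $D$ of the ball-complement. That choice simultaneously forces the separator into a metric sphere about $c$, pinning the two relevant geodesic crossings to the single vertices $v_{r\pm n}$, and, via connectedness of $D$, upgrades ``$R$ separates $c$ from $x$'' to ``$R$ separates $c$ from $y$ as well'', which is what supplies the second vertex of $R$ far from the first. Passing from $\partial D$ to an honest minimal separator $R$ is needed only to invoke the hypothesis and is routine since $\Gamma$ is (uniformly) locally finite; uniformity is otherwise used only to convert (BP) into ``quasi-isometric to a tree'' through Proposition 2 of \cite{M2}.
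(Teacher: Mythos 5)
Your argument is correct, but note that the paper itself contains no proof of this statement: it is quoted verbatim from \cite{M2} (Theorem 7 there), so the only meaningful comparison is with the argument in that reference. That argument is different from yours and shorter: Lemma 1 of \cite{M2} (which this paper reuses to build $\tfrac12$-separating families in Section \ref{sep}) produces, for any $x,y$ with $d(x,y)\geqslant 2$ and any geodesic midpoint $c$, a \emph{minimal $xy$-separator} $P$ containing a vertex $v$ with $d(v,c)\leqslant \tfrac12$ --- essentially by refining the sphere $S_{\lceil r\rceil}(x)$, which every $xy$-path must cross and whose intersection with the chosen geodesic is the single vertex next to $c$; then $\diam P\leqslant m$ immediately gives that every $xy$-path meets $N_{m+1}(c)$. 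Your parenthetical warning against using $xy$-separators is therefore slightly off target: the danger you describe (a minimal $xy$-separator far from $c$) is real for an \emph{arbitrary} minimal $xy$-separator, but disappears once the separator is taken inside a sphere about $x$, which pins it to the midpoint exactly as your $S_n(c)$ pins $R$ to $v_{r\pm n}$. Your alternative --- separating $c$ from the component $D$ of $\Gamma\smallsetminus N_n(c)$ containing the offending path, checking $\partial D\subseteq S_n(c)$, and extracting the two antipodal geodesic vertices $v_{r-n},v_{r+n}$ from a minimal separator $R\subseteq\partial D$ --- is sound (including the Zorn-type extraction of $R$ and the parity adjustment) and buys a better constant, roughly $\Delta=\lfloor m/2\rfloor+2$ instead of $m+1$; what it loses is the explicit separating family near the midpoints of geodesics $[ex]$, which is the structure the present paper actually extracts from \cite{M2} for Theorem \ref{t:char_vfree}. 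One small caveat: your argument needs ``minimal vertex separator'' to mean ``minimal $ab$-separator for some pair $a,b$'' (the standard convention, and the one consistent with \cite{M2}), not the more restrictive reading one could take from Definition \ref{d:vertex_separator}; it is worth saying so explicitly.
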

	
	This is translated in Section \ref{sep} to Geometric Group Theory. As a result, we obtain in Theorem \ref{t:char_vfree} a new characterization for a group to be virtually free which is somehow a re-writting of (BP).
	
	\smallskip
	
	By a \emph{cycle} in a graph we mean a simple closed curve $\gamma$, i.e., a path with different vertices, except for the last one, wich is equal to the first vertex. Without loss of generality we will suppose that every cycle has length at least 3. A \emph{shortcut} in $ \gamma $ is a path $\sigma$ joining two vertices $p,q$ in $ \gamma $ such that $L(\sigma) < d_\gamma (p,q) $, where $ L(\sigma) $ denotes the length of the path $\sigma $ and $d_\gamma$ denotes the length metric on $\gamma$. A shortcut $\sigma$ in $\gamma$ between $p,q$ is \emph{strict} if $\sigma \cap \gamma = \{ p,q \}$. In this case, we say that $p,q$ are \emph{shortcut vertices} in $ \gamma $ \emph{associated} with $\sigma$. 
	
	A graph $ \Gamma $ is \emph{$(k,m)$-chordal} if for every cycle $\gamma$ in $\Gamma$ with length $L(\gamma) \geqslant k $, there exists a shortcut $\sigma $ such that $L(\sigma ) \leqslant m $. The graph $\Gamma$ is \emph{$k$-chordal} if we make $m=\infty $.

	A graph $ \Gamma $ is \emph{$\varepsilon$-densely $(k,m)$-chordal} if for every cycle $\gamma$ in $\Gamma$ with length $L(\gamma) \geqslant k $, there exist strict shortcuts $\sigma _1 , \ldots , \sigma _r $ with $L(\sigma _i) \leqslant m \, \forall\, i, $ and such that their associated shortcut vertices define an $\varepsilon$-dense subset in $(\gamma,d_\gamma)$. The graph $\Gamma$ is \emph{$\varepsilon$-densely $k$-chordal} if we make $m=\infty $. 
	
	\begin{remark}\label{r:natural_ekm_chordal}If $ \Gamma $ is $\varepsilon$-densely $(k,m)$-chordal, then $ \Gamma $ is $\left\lceil  \varepsilon  \right\rceil$-densely $(\left\lceil  k  \right\rceil,\left\lceil  m \right\rceil)$-chordal. Therefore, we can suppose always that $\varepsilon,k,m \in\NN $.
	\end{remark}
	
	We proved that this property characterizes being quasi-isometric to a tree for graphs:
	
	\begin{theorem}[Corollary 3, \cite{M2}]\label{t:qi_tree}A graph $\Gamma$ is quasi-isometric to a tree if and only if it is $\varepsilon$-densely $(k,m)$-chordal for some $\varepsilon,k,m $.
	\end{theorem}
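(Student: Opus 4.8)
The plan is to prove the two implications separately, normalising $\varepsilon,k,m\in\NN$ by Remark \ref{r:natural_ekm_chordal}. For the implication ``$\varepsilon$-densely $(k,m)$-chordal $\Rightarrow$ quasi-isometric to a tree'' I would verify the bottleneck property directly and then invoke the equivalence recalled before Theorem \ref{t:separator}. So let $x,y\in V(\Gamma)$, let $c$ be a midpoint (i.e. $d(x,c)=d(y,c)=\tfrac12 d(x,y)$), and let $\alpha$ be an $xy$-path; the goal is a constant $\Delta=\Delta(\varepsilon,k,m)$ with $\alpha\cap N_\Delta(c)\neq\emptyset$. First one may assume $\alpha$ is embedded (pass to a sub-path) and $c\notin\alpha$. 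Since $c$ realises $d(x,c)+d(c,y)=d(x,y)$ it lies on a geodesic $\beta$ from $x$ to $y$; let $w_1$ be the last vertex of $\beta[x,c]$ on $\alpha$ and $w_2$ the first vertex of $\beta[c,y]$ on $\alpha$, so that $w_1\neq w_2$, the sub-geodesic $\beta':=\beta[w_1,w_2]$ contains $c$ in its interior, and $\alpha':=\alpha[w_1,w_2]$ meets $\beta'$ only in $\{w_1,w_2\}$; hence $\gamma:=\alpha'\cup\beta'$ is a cycle. If $L(\gamma)<k$ then $d(w_1,c)\le L(\beta')<k$ and $w_1\in\alpha$, so $\Delta=k$ works. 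If $L(\gamma)\ge k$, apply $\varepsilon$-dense $(k,m)$-chordality to $\gamma$ and pick a strict shortcut $\sigma$, with endpoints $p,q$ and $L(\sigma)\le m$, whose shortcut vertex $p$ satisfies $d_\gamma(p,c)\le\varepsilon$ (possible because the shortcut vertices are $\varepsilon$-dense and $c\in\gamma$). If $p\in\alpha'$ or $p\in\{w_1,w_2\}$, then $p\in\alpha$ and $d_\Gamma(p,c)\le d_\gamma(p,c)\le\varepsilon$, and we are done. Otherwise $p$ lies in the interior of $\beta'$, and here the crucial observation is that \emph{a geodesic sub-arc of a cycle admits no interior shortcut}: two points $u,v$ of $\beta'$ would satisfy $d_\gamma(u,v)\le d_{\beta'}(u,v)=d_\Gamma(u,v)\le L(\sigma)$, contradicting $L(\sigma)<d_\gamma(u,v)$. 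So the other endpoint $q$ is not on $\beta'$, whence $q\in\alpha'\subseteq\alpha$ and $d_\Gamma(q,c)\le L(\sigma)+d_\gamma(p,c)\le m+\varepsilon$. In every case $\alpha\cap N_\Delta(c)\neq\emptyset$ with $\Delta:=\max\{k,m+\varepsilon\}$, so $\Gamma$ satisfies (BP). (Alternatively one could first show $\varepsilon$-dense $(k,m)$-chordality forces minimal vertex separators to have bounded diameter, and quote Theorem \ref{t:separator}.)

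For the converse I would argue as follows. Suppose $\Gamma$ is quasi-isometric to a tree $T$ via a $(\lambda,C)$-quasi-isometry $f$; put $\mu:=\lambda+C$ and $m:=\lambda\mu+C$, fix a large constant $k$ depending only on $\lambda,C$, and let $\gamma$ be a cycle of length $\ell\ge k$ parametrised by arc length. Fix a vertex $p=\gamma(i)$. The composite $h:=f\circ\gamma$ is a ``coarse loop'' in $T$ (consecutive values within $\mu$), and since every edge of a tree separates it, such a loop must come back past every region it enters; one deduces that there is a point $q\in\gamma$ with $d_\gamma(p,q)>m$ and $d_\Gamma(p,q)\le m$ (the cut-edge of $T$ lying between $f(p)$ and the image of the $\gamma$-antipode of $p$, chosen closest to $f(p)$, is recrossed by $h$ only after $h$ has advanced macroscopically along $\gamma$ — this is where the largeness of $k$ enters). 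Now choose such a $q$ with $d_\Gamma(p,q)$ \emph{minimal} among points of $\gamma$ at $\gamma$-distance $>m$ from $p$, and take a $\Gamma$-geodesic $\rho$ from $p$ to $q$: then $L(\rho)\le m$ and every vertex of $\rho$ on $\gamma$ other than $q$ is at $\gamma$-distance $\le m$ from $p$ (otherwise it would beat the minimality of $d_\Gamma(p,q)$). Listing the vertices of $\rho$ on $\gamma$ in order as $p=z_0,\dots,z_r=q$, we get $\sum_l L(\rho[z_l,z_{l+1}])=L(\rho)\le m<d_\gamma(p,q)\le\sum_l d_\gamma(z_l,z_{l+1})$, so some $\rho[z_l,z_{l+1}]$ with $l<r$ is a strict shortcut of length $\le m$ whose shortcut vertex $z_l$ has $d_\gamma(p,z_l)\le m$. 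Ranging $p$ over the vertices of $\gamma$ yields strict shortcuts of length $\le m$ whose shortcut vertices form an $(m+1)$-dense subset of $(\gamma,d_\gamma)$, so $\Gamma$ is $(m+1)$-densely $(k,m)$-chordal. (Equivalently, one may observe that $\varepsilon$-dense $(k,m)$-chordality is a quasi-isometry invariant among graphs, and that trees satisfy it vacuously.)

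The forward implication is essentially soft once the ``no interior shortcut on a geodesic sub-arc'' observation is isolated; the substantive work is in the converse, and specifically in the assertion that every point $p$ of a cycle $\gamma$ admits a partner $q\in\gamma$ that is simultaneously $\Gamma$-close to $p$ (within $m$) and $\gamma$-far from $p$ (by more than $m$), with $m$ uniform over all cycles. Heuristically this is forced by the tree structure — a loop traverses each cut-edge an even number of times and its first return past a fixed cut-edge occurs only after a bounded-below amount of combinatorial length — but making the bounds uniform, including for cycles that wind back near themselves repeatedly, is the delicate point, and is exactly where one must take $k$ large in terms of the quasi-isometry constants. I expect this quantitative ``backtracking'' lemma to be the main obstacle; everything else (the reduction to an embedded path and a simple cycle, the extraction of a strict shortcut from a short path, and the final density count) is routine bookkeeping.
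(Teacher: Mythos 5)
The paper itself offers no proof of this statement: it is imported as Corollary~3 of \cite{M2}, so the only thing to compare your argument with is that cited source. Your forward implication is essentially correct and matches the expected route: reduce to the simple cycle $\gamma=\alpha'\cup\beta'$, use that a geodesic sub-arc of a cycle cannot contain both endpoints of a strict shortcut, and conclude (BP) with $\Delta=\max\{k,m+\varepsilon\}$, then invoke Manning's equivalence as the paper does before Theorem~\ref{t:separator}.

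The converse, however, contains a genuine gap, and it sits exactly where you flagged it. The backtracking assertion you rely on --- that \emph{every} point $p$ of a long cycle $\gamma$ admits $q\in\gamma$ with $d_\Gamma(p,q)\le m$ and $d_\gamma(p,q)>m$ --- is false as stated, not merely unproved. Take $\Gamma$ to be the ladder $P_\infty\times K_2$ (quasi-isometric to a line, hence to a tree) and $\gamma$ the boundary of an $n\times 1$ rectangle with corner $p=(0,1)$: one checks $d_\Gamma(p,q)=d_\gamma(p,q)$ for every $q\in\gamma$, so $p$ has no partner for any $m$. Only a version quantified over an $\varepsilon$-dense set of base points can be true (in the example, the neighbour $(1,1)$ does have the partner $(1,0)$), and that weaker statement is precisely the hard content of the theorem; your tree cut-point sketch does not deliver it, because the two crossing points $u,v$ it produces on the two arcs of $\gamma$ come with a bound on $d_\Gamma(u,v)$ but with no lower bound on $d_\gamma(u,v)$ --- both may lie near $p$. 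The parenthetical fallback, that $\varepsilon$-dense $(k,m)$-chordality is a quasi-isometry invariant and trees satisfy it vacuously, cannot be used either: quasi-isometries do not respect cycles, and that invariance is essentially equivalent to the theorem being proved. The remaining bookkeeping (decomposing the geodesic $\rho$ along its intersections with $\gamma$ to extract a strict shortcut with a vertex near $p$) is fine; what is missing is the uniform backtracking estimate itself, which is the substance of \cite{M2}.
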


	A group $G$ is \emph{virtually free} if it has a free subgroup of finite index. There are many characterizations of being virtually free in the literature. Let us recall here some of them.

	
	
	
	\begin{theorem}\label{T:charac} Let $\Gamma$ be the Cayley graph of a finitely generated group $G$, then the following are equivalent:
		\begin{enumerate}
			\item[(1)] $G$ is virtually free.
			\item[(2)] The language of all words on $S^{\pm 1} $ representing the identity is context-free, where $S$ is a finite generating set. (Muller-Schupp's Theorem \cite{D,MS})
			\item[(3)] $G$ is context-free. \cite{D,MS}
			\item[(4)] There exist $k\geqslant 0$ such that every closed path in $\Gamma$ is $k$-triangulable. \cite{MS}
			\item[(5)] The ends of $\Gamma$ have finite diameter. \cite{W}
			\item[(6)] $\Gamma$ admits a uniform spanning tree. \cite{W}
			\item[(7)] $\Gamma$ is quasi-isometric to a Cayley graph of a free group. \cite{D,GH}
			\item[(8)] $\Gamma$ has finite strong tree-width. \cite{KL}
			\item[(9)] $G$ is finitely presentable and asdim$\Gamma$ = 1, where asdim$\Gamma$ is the asymptotic dimension. \cite{FW,Gen,JS}
			\item[(10)] There exists a generating set of $G$ and $k\geqslant 0$ such that every $k$-locally geodesic in $G$ is a geodesic. \cite{GHHR}
			\item[(11)] There exists a generating set of $G$ such that $G$ admits a (regular) broomlike combing with respect to the generating set. \cite{BG}
			\item[(12)] $G$ acts on a tree with finite stabilizers.\cite{A,DD,KPS}
			\item[(13)] $G$ is the fundamental group of a finite graph of finite groups. \cite{A,DD,KPS}
			\item[(14)] $\Gamma$ is quasi-isometric to a tree. \cite{D,S}
			\item[(15)] $\Gamma$ satisfies (BP). \cite{Man}
			\item[(16)] $G$ admits a finite presentation by some geodesic rewriting system. \cite{GHHR}
			\item[(17)] $G$ is the universal group of a finite pregroup. \cite{R}
			\item[(18)] The monadic second-order theory of $\Gamma$ is decidable. \cite{KL,MS2}
			\item[(19)] $G$ admits a Stallings section. \cite{SSV}
			\item[(20)] $\overline{\Gamma}$ is polygon hyperbolic. \cite{AS}
			\item[(21)] For any finite generating set, the corresponding Cayley graph is minor excluded. \cite{K}
		\end{enumerate}
	\end{theorem}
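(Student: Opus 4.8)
\emph{Proof proposal.} Since Theorem~\ref{T:charac} is a compendium of known equivalences, the ``proof'' is a matter of routing through the cited literature rather than producing new arguments, and I would present it as a hub-and-spoke argument organized around the algebraic core $(1)\Leftrightarrow(12)\Leftrightarrow(13)$. For this core I would invoke Stallings' theorem on ends of groups together with Dunwoody's accessibility theorem: a finitely generated group with infinitely many ends splits as an amalgam or HNN extension over a finite subgroup, and accessibility guarantees this can be iterated only finitely often, so $G$ is the fundamental group of a finite graph of finite groups; by Bass--Serre theory this is the same as acting on a tree with finite stabilizers, and the equivalence with being virtually free is the classical result of Karrass--Pietrowski--Solitar (see \cite{A,DD,KPS}).

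Around this hub I would attach three standard blocks. The geometric block $(7)\Leftrightarrow(14)\Leftrightarrow(15)$ rests on the fact that virtual freeness is a quasi-isometry invariant (\cite{D,S,GH}): a virtually free group is quasi-isometric to its Bass--Serre tree, giving $(1)\Rightarrow(14)\Rightarrow(7)$, while conversely a group quasi-isometric to a free group acts properly on a tree and hence, via accessibility, is virtually free; the equivalence $(14)\Leftrightarrow(15)$ is Manning's bottleneck criterion \cite{Man}. The language-theoretic block $(2)\Leftrightarrow(3)\Leftrightarrow(4)$ is the Muller--Schupp theorem \cite{MS} (again using accessibility \cite{D} for the difficult implication), with $k$-triangulability being the geometric reformulation in that paper. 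Each of the remaining items is then a single-reference spoke tied back to the hub: $(5),(6)$ from Woess's work on ends and spanning trees \cite{W}; $(8),(18)$ from Kuske--Lohrey on strong tree-width and decidability of the monadic second-order theory \cite{KL,MS2}; $(9)$ from \cite{FW,Gen,JS} on asymptotic dimension one; $(10),(16)$ from Gilman--Hermiller--Holt--Rees on local geodesics and geodesic rewriting systems \cite{GHHR}; $(11)$ from Bridson--Gilman on broomlike combings \cite{BG}; $(17)$ from \cite{R} on universal groups of pregroups; $(19)$ from \cite{SSV} on Stallings sections; $(20)$ from \cite{AS} on polygon hyperbolicity of $\overline{\Gamma}$; and $(21)$ from \cite{K} on minor-excluded Cayley graphs.

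The main obstacle here is bookkeeping rather than mathematics: one must verify that each cited equivalence is stated for \emph{finitely generated} groups, and for the Cayley graph with respect to a fixed finite generating set where relevant (as opposed to the ``for every finite generating set'' formulation in $(21)$, which is harmless since virtual freeness is generating-set independent). One should also flag item $(9)$, where finite presentability is a genuine additional hypothesis and not automatic from finite generation. Beyond assembling these references into a single cycle of implications, no new content is required.
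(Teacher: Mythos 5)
Your proposal is correct and matches what the paper does: Theorem~\ref{T:charac} is stated as a compendium with each item attributed to the cited references, and no proof is given in the text beyond those citations (plus a pointer to \cite{A} for alternative arguments), so routing every equivalence through the literature as you describe is exactly the intended justification. Your hub-and-spoke organization around $(1)\Leftrightarrow(12)\Leftrightarrow(13)$ and your caveats about finite generation versus finite presentability in item $(9)$ are sensible additions but do not diverge from the paper's approach.
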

	
	For some alternative proofs of some of these see also \cite{A,DW}.
	
	\smallskip
	
	In Section \ref{Chor} we translate the definition of being $\varepsilon$-densely $(k,m)$-chordal to the language of Group Theory, see Theorem \ref{t:ikm_chordal}. In those terms, we give a new characterization of being virtually free in Corollary \ref{c:virtually_free} which states the following:
	
	\begin{theorem}	A finitely generated group $G$ is virtually free if and only if $G$ is $(i_0,k,m)$-chordal for some $i_0,k,m$.
	\end{theorem}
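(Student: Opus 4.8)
The plan is to derive this corollary by concatenating equivalences that are already on the table, rather than by a fresh geometric argument. First I would fix a finite generating set $S$ of $G$ and let $\Gamma$ be the associated Cayley graph; then $\Gamma$ is locally finite and vertex-transitive, hence uniform, and its quasi-isometry type does not depend on $S$. By Theorem \ref{T:charac}, the equivalence of items (1) and (14), $G$ is virtually free if and only if $\Gamma$ is quasi-isometric to a tree, so it suffices to show that $\Gamma$ is quasi-isometric to a tree precisely when $G$ is $(i_0,k,m)$-chordal for some $i_0,k,m$ (which, by Remark \ref{r:natural_ekm_chordal} and its group-theoretic analogue, may be taken in $\NN$).

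Next I would quote Theorem \ref{t:qi_tree}: $\Gamma$ is quasi-isometric to a tree if and only if it is $\varepsilon$-densely $(k,m)$-chordal for some $\varepsilon,k,m\in\NN$. Then I would invoke Theorem \ref{t:ikm_chordal} from Section \ref{Chor}, which is precisely the statement that, for the Cayley graph of a finitely generated group, being $\varepsilon$-densely $(k,m)$-chordal is equivalent to $G$ being $(i_0,k,m)$-chordal; the underlying dictionary reads: a simple closed path in $\Gamma$ corresponds to a (``simple'') word representing $1$ in $G$; a strict shortcut $\sigma$ between two vertices $p,q$ of the cycle corresponds to a word of length at most $m$ which equals in $G$ the subword of the relator from $p$ to $q$ and which meets the relator only in $p$ and $q$; and $\varepsilon$-density of the shortcut vertices in $(\gamma,d_\gamma)$ corresponds to the $i_0$-spacing condition measured in the cyclic word metric. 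Chaining the three steps gives $G$ virtually free $\iff$ $\Gamma$ quasi-isometric to a tree $\iff$ $\Gamma$ is $\varepsilon$-densely $(k,m)$-chordal $\iff$ $G$ is $(i_0,k,m)$-chordal, which is the claim; note that this also shows the existence of such $i_0,k,m$ is independent of the generating set, since it is equivalent to a quasi-isometry invariant.

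I expect the only genuine difficulty to sit inside the third step, i.e.\ in Theorem \ref{t:ikm_chordal}: one must check that the geometric notion of an $\varepsilon$-dense family of strict shortcuts in a cycle can be re-expressed purely through words and relators without losing anything essential --- that the requirement ``all vertices of the path are distinct'', the strictness $\sigma\cap\gamma=\{p,q\}$, and density measured in the intrinsic metric $d_\gamma$ rather than in $d_\Gamma$ all survive the translation with the constants changing only in a controlled way, and that the resulting property does not depend on the chosen finite presentation once the constants are quantified existentially. By contrast, the first two steps are straight citations (of Theorem \ref{T:charac} and Theorem \ref{t:qi_tree}, the latter resting on Theorem \ref{t:separator}). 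A small point to pin down before anything else is the reading of ``$G$ is $(i_0,k,m)$-chordal'' in the statement: it should mean that there exist $i_0,k,m\in\NN$, and a finite generating set, for which the condition of Theorem \ref{t:ikm_chordal} holds.
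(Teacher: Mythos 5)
Your proposal is correct and follows exactly the paper's own route: the paper derives this statement (Corollary \ref{c:virtually_free}) by chaining Theorem \ref{t:ikm_chordal} with Theorem \ref{t:qi_tree} to get Corollary \ref{c:qi_tree}, and then invoking the equivalence of (1) and (14) in Theorem \ref{T:charac}. You also correctly identify that all the genuine work lives in Theorem \ref{t:ikm_chordal}, which the paper proves separately.
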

	
	
	
	
	
	
	

	\smallskip
	
	In Section \ref{BS} we analyze the example of Baumslag-Solitar groups and prove that $BS(1,n)$ is $k$-chordal for some $k$ if and only if $|n|<3.$

	\smallskip

	The word problem in the group presentation $G = \left\langle S | R \right\rangle $ consists in finding an 
	algorithm that takes as its input a word $w$ over $S$ and answers if $w$ represents the trivial element in $G$ or not. For many groups this problem is undecidable. A group has solvable word problem if such an algorithm exists. For details about the word problem, see \cite{DK,L}. 

	In Section \ref{word} we give an application of generalized chordality to the study of the word problem for groups. In particular we obtain Theorem \ref{t:word_problem} which states the following:
	
	\begin{theorem} Let $ G = \left\langle S | R \right\rangle $ be a finite group presentation. If $G$ is $k$-chordal, then $\left\langle S | R \right\rangle$ admits a recursive isoperimetric function, i.e., $\left\langle S | R \right\rangle$ has a solvable word problem.
	\end{theorem}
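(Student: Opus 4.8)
The plan is to prove the real content of the statement, namely that $k$-chordality of the Cayley graph $\Gamma=\Gamma(G,S)$ forces an \emph{exponential} isoperimetric inequality for $\langle S\mid R\rangle$; the equivalence between admitting a recursive isoperimetric function and having a solvable word problem, for a finite presentation, is a classical fact (see \cite{DK,L}), which I will only sketch at the end.

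The first step is a reduction to \emph{simple} cycles, so that $k$-chordality can be invoked. A word $w$ over $S^{\pm 1}$ with $w=1$ in $G$ is read off a closed edge-path $\gamma$ in $\Gamma$ based at $1$, and conversely every closed edge-path represents $1$; write $\mathrm{Area}(\gamma)$ for the least number of $2$-cells in a van Kampen diagram over $\langle S\mid R\rangle$ with boundary label $w$. I would show that any closed path $\gamma$ with $L(\gamma)\ge k$ can be cut into two closed paths of strictly smaller length whose areas together bound $\mathrm{Area}(\gamma)$ from above. If $\gamma$ revisits a vertex, split it there into closed paths $\gamma',\gamma''$ with $L(\gamma')+L(\gamma'')=L(\gamma)$ and $L(\gamma'),L(\gamma'')<L(\gamma)$; gluing the corresponding van Kampen diagrams (one of them up to conjugation) gives $\mathrm{Area}(\gamma)\le\mathrm{Area}(\gamma')+\mathrm{Area}(\gamma'')$. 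If instead $\gamma$ is a simple cycle, then $L(\gamma)\ge k$ provides, by $k$-chordality, a shortcut $\sigma$ joining two vertices $p,q\in\gamma$ with $L(\sigma)<d_\gamma(p,q)$; writing $\alpha,\beta$ for the two $\gamma$-arcs from $p$ to $q$ with $L(\alpha)=d_\gamma(p,q)\le L(\beta)$, the closed paths $\gamma_1=\alpha\cdot\overline{\sigma}$ and $\gamma_2=\sigma\cdot\overline{\beta}$ satisfy $L(\gamma_1)=L(\alpha)+L(\sigma)<2L(\alpha)\le L(\gamma)$ and $L(\gamma_2)=L(\sigma)+L(\beta)<L(\gamma)$, and since $[\gamma]=[\gamma_1][\gamma_2]$ in $\pi_1$ of the presentation complex, gluing the two diagrams along $\sigma$ gives $\mathrm{Area}(\gamma)\le\mathrm{Area}(\gamma_1)+\mathrm{Area}(\gamma_2)$.

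Iterating this cut produces a binary tree of depth at most $|w|$ whose leaves are closed paths of length $<k$. Since the presentation is finite there are only finitely many words over $S^{\pm 1}$ of length $<k$, each of them of finite area; let $A_0$ be the maximum of these areas. Then $\mathrm{Area}(w)\le 2^{|w|}A_0$ for every word $w$ with $w=1$ in $G$, so $f(n):=A_0\cdot 2^{n}$ is an isoperimetric function for $\langle S\mid R\rangle$, and it is a total computable function of $n$ --- note that we only need $f$ to exist and be recursive, not to know the constant $A_0$ explicitly. Finally, a finite presentation with a recursive isoperimetric function $f$ has solvable word problem: given $w$ with $|w|=n$, compute $f(n)$, observe that a van Kampen diagram with at most $f(n)$ cells has a $1$-skeleton with at most $N(n)$ edges for some $N$ computable from $n$, enumerate the finitely many such diagrams, and accept iff one of them has boundary label $w$.

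The two gluing inequalities and the length bookkeeping in the shortcut cut are routine. The only real subtlety is that in the $k$-chordal case ($m=\infty$) a shortcut is constrained only by $L(\sigma)<d_\gamma(p,q)\le L(\gamma)/2$, so the recursion does not ``halve'' cycle lengths and an exponential bound is essentially the best this argument yields --- which is nonetheless all the statement asks for.
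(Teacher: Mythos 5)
Your proof is correct and follows essentially the same route as the paper: split a long closed path either at a repeated vertex or, if it is a simple cycle, along the shortcut provided by $k$-chordality, glue the two van Kampen diagrams to get $\mathrm{Area}(\gamma)\le\mathrm{Area}(\gamma_1)+\mathrm{Area}(\gamma_2)$ with both pieces strictly shorter, and conclude an exponential isoperimetric bound (the paper phrases this as the recurrence $\mathrm{Dehn}(n)\le 2\,\mathrm{Dehn}(n-1)$ for $n>k$ rather than counting leaves of the recursion tree, and invokes Brady--Riley--Short for the final equivalence, but these are cosmetic differences). Your remark that one only needs the constant $A_0$ to exist, not to be effectively computable, is a small point handled more carefully here than in the paper.
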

	
	Moreover, we obtain that if $G$ is $k$-chordal, then there is an isoperimetric function $f$ such that $f(n)=c2^{n-k}$ for some constant $c$.
	
	Some groups are known to have solvable word problem: free groups, see \cite{DK}; finitely presented residually finite groups, see \cite{DK}; hyperbolic groups, \cite{G}, among others. In particular, it is well known that the free abelian group $ \left\langle a,b\,|\, [a,b] \right\rangle  $ has solvable word problem, see \cite{L}. We prove again this fact to illustrate an application of Theorem \ref{t:word_problem} in search of groups with solvable word problem.

	\section{Background}\label{S:back}
	
	Let $(X,d_1)$, $(Y,d_2)$ be metric spaces, $f:\, X \longrightarrow Y$ and $a,b > 0$. The function $f$ is an \emph{$(a,b)$-quasi-isometric embedding} if $$ \frac{1}{a} d_1 (x,y) - b \leqslant d_2 (f(x),f(y)) \leqslant a d_1 (x,y) + b,\,\forall\, x,y\in X. $$ 
	Two functions $f,g:\, X \longrightarrow Y$ are said to be at \emph{finite distance} if there exists $\varepsilon > 0$ such that $ d_2 (f(x) , g(x)) < \varepsilon $ for all $x\in X$. The function $f$ is a \emph{quasi-isometry} if $f$ is a quasi-isometric embedding (there exist $a,b > 0$ such that $f$ is an $(a,b)$-quasi-isometric embedding) and there exists a quasi-isometric embedding  $g:\, Y \longrightarrow X$ such that $f\circ g $ and $Id_Y$ are at finite distance and $g\circ f $ and $Id_X$ are also at finite distance. Let $\varepsilon > 0$ be a constant. A subset $D\subseteq X$ is said to be \emph{$\varepsilon$-dense} if given a point $x\in X$ there exists $p\in P$ such that $d_1( x,p ) < \varepsilon$. It is well known that $f$ is a quasi-isometry if and only if $f$ is a quasi-isometric embedding and $f(X)$ is $\varepsilon$-dense for some $\varepsilon > 0$, see \cite{L} for details. It is said that two metric spaces are \emph{quasi-isometric} if there is a quasi-isometry between both of them.
	
	\smallskip
	
	Given a metric space $(X,d)$, a \emph{geodesic} from $x$ to $y$ is an isometry  $\gamma :\, [0,L] \longrightarrow X$ such that $\gamma (0) = x$ and $\gamma (L) = y$. Abusing the notation, we will identify the geodesic $\gamma$ with its image and refer to this image as a geodesic. A metric space $(X,d)$ is \emph{geodesic} when there is a geodesic from $x$ to $y$ for every pair of points $x,y\in X$. It is important to note that geodesics do not have to be unique, for convenience, $[xy]$ will denote any such geodesic.
	
	\smallskip
	
	There are several definitions of Gromov $\delta$-hyperbolic space which are equivalent although the constant $\delta$ may appear multiplied by some constant, see \cite{BS} for details. Since we will use geodesic spaces, it is natural to use the Rips condition on the triangles. Given three points $x,y,z$ in a geodesic space $(X,d)$, a geodesic $\gamma_1$ from $x$ to $y$, a geodesic $\gamma_2$ from $y$ to $z$ and a geodesic $\gamma_3$ from $z$ to $x$, the union of these geodesics is a \emph{geodesic triangle} and will be denoted by $T=\{ x,y,z \}$ or $T=\{ \gamma_1 , \gamma_2, \gamma_3 \}$ when we want to emphasize geodesics. $T$ is \emph{$\delta$-thin} if any side (any geodesic $\gamma_i$) of $T$ is containded in the $\delta$-neighborhood of the union of the two other sides. A geodesic metric space $(X,d)$ is \emph{$\delta$-hyperbolic} if every geodesic triangle in $(X,d)$ is $\delta$-thin. A geodesic metric space $(X,d)$ is \emph{hyperbolic} if $(X,d)$ is $\delta$-hyperbolic for some $\delta \geqslant 0$. 
	
	\smallskip
	
	A connected graph $\Gamma$ can be interpreted as a metric space with a length metric $d_\Gamma$ by considering every edge $e\in E(\Gamma)$ as isometric to the interval $[0,1]$. Therefore, the distance between any pair of points $x,y\in \Gamma$ will be the length of the shortest path in $\Gamma$ joining $x$ and $y$. In this paper we will assume that every graph is locally finite and connected. Thus, every graph is a geodesic metric space.
	
	For any vertex $v\in V(\Gamma) $ and any constant $\varepsilon > 0$, let us denote:
	
	$$ S_\varepsilon (v) := \{ w\in V(\Gamma)\, |\, d(v,w) = \varepsilon \}, $$
	
	$$ B_\varepsilon (v) := \{ w\in V(\Gamma)\, |\, d(v,w) < \varepsilon \}, $$
	
	$$ N_\varepsilon (v) := \{ w\in V(\Gamma)\, |\, d(v,w) \leqslant \varepsilon \}. $$
	
	\smallskip
	
	Let $G$ be a group generated by a subset $S\subseteq G \smallsetminus \{e\}$ and let $S^{\pm 1} = S\cup S^{-1} $. $S$ is a generating set if every element of $G$ can be written as a finite product of elements in $S^{\pm 1}$. $G$ is finitely generated if there is a finite generator set. 
	The \emph{Cayley graph} of $G$ with respect to $S$ is a graph $\Gamma = Cay(G,S)$ whose set of vertices is $G$ and whose set of edges is $ \{ \{ g,gs \}\,|\, g\in G, \, s\in S^{\pm 1} \} $. Given two finite generating sets $S$ and $S'$ of $G$, then $Cay(G,S)$ and $Cay(G,S')$ are quasi-isometric (via $id_G$), see \cite{L} for details. A finitely generated group $G$ is \emph{quasi-isometric} to a metric space $(X,d)$ if there is a quasi-isometry between $\Gamma$ and $(X,d)$. This is well defined because it does not depend on the chosen generating sets. Hyperbolicity is a well-known invariant under quasi-isometries. Therefore, it can be said that a finitely generated group $G$ is \emph{hyperbolic} if some Cayley graph $\Gamma$ of $G$ is hyperbolic.
	
	\section{Vertex separators and virtually free groups}\label{sep}

	\begin{definition}\label{d:vertex_separator}Let $G$ be a group generated by $S$. We say $P\subseteq G$ is an $(a,b)$-separator with respect to $S$ if for all $s_1, \ldots , s_n \in S^{\pm 1} $ such that $ b = as_1 \cdots s_n $, then there exists $ 0 < i < n $ such that $ as_1 \cdots s_i \in P$. A vertex separator with respect to $S$ is an $(a,b)$-separator with respect to $S$ for some $a,b\in G$. A minimal vertex $(a,b)$-separator with respect to $S$ is an $(a,b)$-separator with respect to $S$ for some $a,b\in G$ such that no proper subset is an $(a,b)$-separator with respect to $S$.
	\end{definition}
	
	If $\Gamma$ is the Cayley graph of $G$ with respect to $S$ and $P\subset G=V(\Gamma)$, it is immediate that $P$ is an $(a,b)$-separator in $\Gamma$ if and only if $P$ is an $(a,b)$-separator with respect to $S$.
	
	\begin{remark}\label{r:vertex_separator}Let $G$ be a group generated by $S$. The equation $ b = as_1 \cdots s_n $ can be read as $ a^{-1}b =  s_1 \cdots s_n $. Therefore, $P$ is an $(a,b)$-separator with respect to $S$ if and only if $a^{-1} P $ is an $(e ,a^{-1}b)$-separator with respect to $S$.
	\end{remark}
	
	Let $G$ be a group generated by $S$ and $A\subseteq G$. We denote the diameter of $A$ (with respect to $S$) as $ \diam (A) = \sup\{ d_\Gamma (x,y) \,|\, x,y\in A \} $. The set of vertex separators between $e$ and the rest of elements of $G$ will be denoted as follows. 
	$$ Sep(G)=\{P \, | \, P \text{ is a minimal $(e,x)$-separator with respect to $S$ for some } x\in G\smallsetminus\{e\}\}.$$
	We denote $ \mathcal{D}(G) = \sup\{ \diam (P) \,|\, P\in Sep(G) \} $.
	
	
	\begin{theorem}\label{t:separator_groups}Let $G$ be a group generated by $S$. If $ \mathcal{D}(G) < \infty $, then $G$ is virtually free.
	\end{theorem}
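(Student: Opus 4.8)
The plan is to deduce this from the graph-theoretic Theorem~\ref{t:separator} applied to $\Gamma=\mathrm{Cay}(G,S)$, the only real work being to promote the hypothesis $\mathcal{D}(G)<\infty$ --- which a priori only bounds the minimal $ex$-separators based at the identity --- to a bound on \emph{every} minimal vertex separator of $\Gamma$, using that $G$ acts on $\Gamma$ by isometries. First I would take $S$ finite, so that $\Gamma$ is locally finite and connected, hence a geodesic metric space, and vertex-transitive, so in particular a uniform graph in the sense required by Theorem~\ref{t:separator}. By the observation following Definition~\ref{d:vertex_separator}, a subset $P\subseteq G=V(\Gamma)$ is a minimal $ab$-separator in $\Gamma$ exactly when it is a minimal $ab$-separator with respect to $S$, so it suffices to produce a constant $m$ bounding $\diam(P)$ for every minimal $ab$-separator $P$ in $\Gamma$ with $a\neq b$.

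For this I would set $m:=\mathcal{D}(G)$ and argue by translation. Given a minimal $ab$-separator $P$ with $a\neq b$, left multiplication by $a^{-1}$ is a graph automorphism of $\Gamma$, hence an isometry; by Remark~\ref{r:vertex_separator} it carries $P$ to a minimal $e(a^{-1}b)$-separator $a^{-1}P$ with $a^{-1}b\neq e$, so $a^{-1}P\in Sep(G)$ and $\diam(P)=\diam(a^{-1}P)\leqslant\mathcal{D}(G)=m$. Hence every minimal vertex separator of $\Gamma$ has diameter at most $m$, and Theorem~\ref{t:separator} yields that $\Gamma$ satisfies (BP) and is quasi-isometric to a tree. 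Finally, by the equivalence of items (1) and (14) (equivalently (15)) in Theorem~\ref{T:charac}, a finitely generated group whose Cayley graph is quasi-isometric to a tree is virtually free, which is the assertion.

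The step I expect to need the most care is the very first one: checking that $\mathrm{Cay}(G,S)$ satisfies whatever regularity condition is packaged into the word ``uniform'' in Theorem~\ref{t:separator} of \cite{M2}. Since a Cayley graph is vertex-transitive under the left action of $G$ by automorphisms, and locally finite once $S$ is finite, this should be immediate, but it is precisely here that the homogeneity of Cayley graphs and the finiteness of $S$ are used. The remaining ingredients --- the dictionary between separators in $\Gamma$ and separators with respect to $S$, and the translation-invariance of diameters via Remark~\ref{r:vertex_separator} --- are exactly what has been prepared earlier in this section.
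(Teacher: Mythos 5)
Your proposal is correct and follows essentially the same route as the paper: translate an arbitrary minimal $ab$-separator to a minimal $e(a^{-1}b)$-separator via left multiplication by $a^{-1}$ (Remark~\ref{r:vertex_separator}), bound its diameter by $\mathcal{D}(G)$, and invoke Theorem~\ref{t:separator} together with the equivalence of (BP) and virtual freeness. Your added care about the ``uniform'' hypothesis (vertex-transitivity of the Cayley graph) and the use of $\leqslant\mathcal{D}(G)$ rather than the paper's strict inequality are minor refinements of the same argument.
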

	
	\begin{proof}Suppose $ \mathcal{D}(G) < \infty $. Let $P$ be a minimal vertex separator between $a$ and $b$. By Remark \ref{r:vertex_separator}, $ a^{-1} P \in Sep(G) $ and therefore $\diam (P) = \diam (a^{-1} P) < \mathcal{D}(G) < \infty $. By Theorem \ref{t:separator}, $Cay(G,S)$ satisfies (BP), i.e., $G$ is virtually free.
	\end{proof}
	
	The following example shows that the converse is not true.
	
	\begin{example}\label{e:converse_counter}Let $G = \ZZ\times\ZZ _4$ be a group generated by $ S = \{ (1,0),(0,1) \} $. Obviously, $G$ is virtually free. However, $P= \ZZ\times\{1 \} \cup \ZZ\times\{3 \} $ is a minimal separator with infinite diameter, see Figure \ref{Fig:converse_counter}.
	\end{example}
	
	\begin{figure}[h]
		\centering
		\includegraphics[scale=0.6]{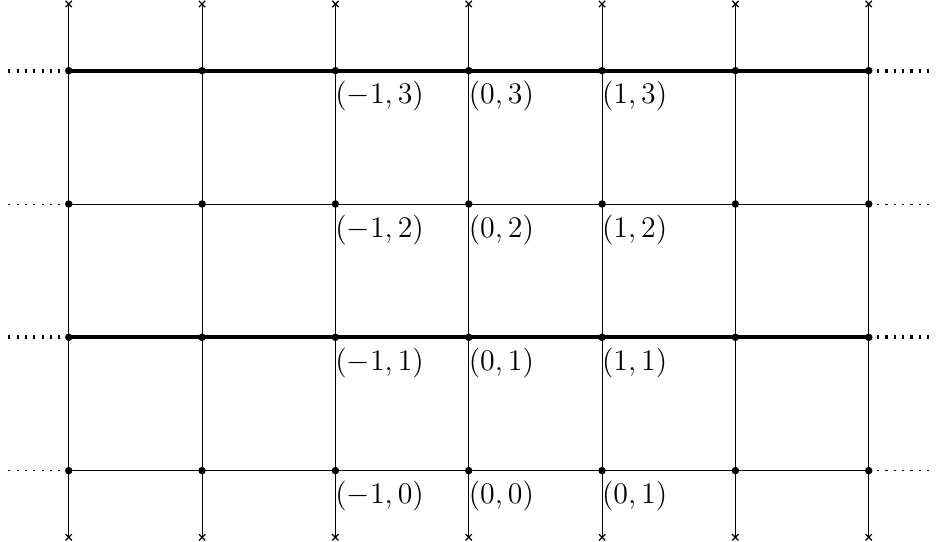}
		\caption{The Cayley graph of $G$ with respect to $S$ (the cross in vertical lines are glued). In bold the minimal vertex separator $P$.}
		\label{Fig:converse_counter}
	\end{figure}
	
	However, Theorem \ref{t:separator_groups} can be improved to obtain a characterization for being virtually free in terms of minimal vertex separators. The result, Theorem \ref{t:char_vfree} below, is quite a straightforward re-writing of Manning's Bottleneck Property.
	
	\begin{definition}\label{d:separator_BP}Let $G$ be a group generated by $S$. We
		say that $\mathcal{F} = \{ P_x \, : \, x\in G\smallsetminus N_1(e) \}$ is a \textit{$k$-separating family} with respect to $S$ if for every $x\in G\smallsetminus N_1(e)$:
		\begin{enumerate}
			\item $P_x$ is a minimal vertex $(e,x)$-separator with respect to $S$.
			\item There is $v\in P_x$ such that $d(v,c) \leqslant k$, where $c$ is some midpoint of a geodesic $[ex]$.
		\end{enumerate}
	\end{definition}
	
	From Lemma 1 in \cite{M2} it follows that for every finitely generated group there exists a $\frac{1}{2}$-separating family and, therefore, a $k$-separating family for every $k\geq \frac12$.
	
	\smallskip
	
	Given a $k$-separating family $\mathcal{F}$, we denote $ \mathcal{D}(\mathcal{F}) = \sup\{ \diam (P) \,|\, P\in \mathcal{F} \}  $ and 
	$$ \hat{\mathcal{D}}(G) = \inf \{ \mathcal{D}(\mathcal{F}) \,|\, \mathcal{F} \text{ is a } k\text{-separating family for some } k > 0 \}.$$


	\begin{theorem}\label{t:char_vfree}Let $G$ be a group generated by $S$. The group $G$ is virtually free  if and only if $ \hat{\mathcal{D}}(G) < \infty$.
	\end{theorem}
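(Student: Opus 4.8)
The plan is to prove both implications by relating $\hat{\mathcal{D}}(G)$ to Manning's Bottleneck Property (BP) of the Cayley graph $\Gamma = Cay(G,S)$, which by item (15) of Theorem \ref{T:charac} is equivalent to $G$ being virtually free. One direction is essentially Theorem \ref{t:separator_groups} sharpened: if $\hat{\mathcal{D}}(G) < \infty$, pick a $k$-separating family $\mathcal{F}$ with $\mathcal{D}(\mathcal{F}) < \infty$; for any two points $x,y \in V(\Gamma)$ and a midpoint $c$ of $[xy]$, translate by $x^{-1}$ so that (using Remark \ref{r:vertex_separator}) the relevant minimal $ex$-separator is the member $P_{x^{-1}y} \in \mathcal{F}$, which contains a vertex $v$ within $k$ of the midpoint of a geodesic $[e\,(x^{-1}y)]$. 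Any geodesic from $e$ to $x^{-1}y$ is another such geodesic, so by a standard argument its midpoint is within a bounded distance (depending only on $\delta$-hyperbolicity, which we already know) of $c' = x^{-1}c$; hence $v$ is within $k + O(\delta)$ of $c'$. Since $P_{x^{-1}y}$ is an $e(x^{-1}y)$-separator with diameter at most $\mathcal{D}(\mathcal{F})$, every path from $e$ to $x^{-1}y$ meets $P_{x^{-1}y}$, hence meets $N_\Delta(c')$ with $\Delta = k + \mathcal{D}(\mathcal{F}) + O(\delta)$. Translating back by $x$ gives (BP) with this $\Delta$, so $G$ is virtually free.

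For the converse, suppose $G$ is virtually free, so $\Gamma$ satisfies (BP) with some constant $\Delta$. For each $x \in G \smallsetminus N_1(e)$ choose a geodesic $[ex]$ and its midpoint $c$. By (BP), the ball $N_\Delta(c)$ meets every $ex$-path; in particular the (finite, since $\Gamma$ is locally finite) set $N_\Delta(c) \cap V(\Gamma)$ is an $ex$-separator. Inside it choose a minimal $ex$-separator $P_x$ — this exists because any finite separator contains a minimal one. Then $P_x \subseteq N_\Delta(c)$, so $\diam(P_x) \leqslant 2\Delta$, and $P_x$ contains some vertex (e.g. any of its vertices) within $\Delta$ of $c$; in fact the midpoint-condition (2) of Definition \ref{d:separator_BP} is met with $k = \Delta$ provided $c$ can be taken to be a vertex or half-integer point of the geodesic — and here one should be slightly careful, taking $c$ to be a genuine vertex when $d(e,x)$ is even and handling the odd case by the usual $\frac12$-shift, exactly as in the remark after Definition \ref{d:separator_BP} guaranteeing a $\frac12$-separating family. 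The family $\mathcal{F} = \{P_x\}$ is then a $k$-separating family with $\mathcal{D}(\mathcal{F}) \leqslant 2\Delta < \infty$, whence $\hat{\mathcal{D}}(G) \leqslant 2\Delta < \infty$.

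The main obstacle I anticipate is purely bookkeeping: reconciling the continuous midpoint $c \in \Gamma$ appearing in (BP) with the combinatorial requirement in Definition \ref{d:separator_BP}(2) that $c$ be a midpoint of a geodesic $[ex]$ and that $v \in P_x \subseteq V(\Gamma)$ — i.e., matching parities of word lengths and passing between "every $ex$-path meets $N_\Delta(c)$" and "there is a minimal vertex separator near $c$." Both the existence of a $\frac12$-separating family (cited from Lemma 1 in \cite{M2}) and Remark \ref{r:vertex_separator} are exactly the tools that absorb this friction, so the argument should go through cleanly once those translations are invoked. A secondary point to state carefully is why a finite separator always contains a minimal one — immediate by finiteness — and why quasi-isometry invariance lets us move freely between the geometric statement (BP) and the algebraic statement $\hat{\mathcal{D}}(G) < \infty$ without changing the class of groups described.
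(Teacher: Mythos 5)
Your overall strategy coincides with the paper's: both directions go through Manning's Bottleneck Property and item (15) of Theorem \ref{T:charac}, the forward direction reducing general pairs to pairs $(e,x^{-1}y)$ via Remark \ref{r:vertex_separator}, and the converse manufacturing the family $\{P_x\}$ from the balls $N_\Delta(c)$ provided by (BP). Your converse is in fact slightly more careful than the paper's: the paper simply sets $P_x=V(\Gamma)\cap N_{\Delta+\frac12}(v_x)$ and declares the result a $\frac12$-separating family with $\mathcal{D}(\mathcal{F})\leqslant 2\Delta+1$, without verifying minimality, whereas you correctly pass to a minimal $ex$-separator inside this finite separator; that extra step is genuinely needed to meet condition (1) of Definition \ref{d:separator_BP} and costs nothing on the diameter bound.

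The one step I would not accept as written is in the forward direction: to compare the midpoint $c'=x^{-1}c$ you are handed with the midpoint of the geodesic $[e\,(x^{-1}y)]$ implicit in Definition \ref{d:separator_BP}(2), you invoke ``$\delta$-hyperbolicity, which we already know.'' At that stage you know nothing of the sort --- hyperbolicity is a consequence of the conclusion you are trying to reach, so assuming it there is circular; and in a general graph the midpoints of two geodesics with the same endpoints can be arbitrarily far apart, so some input really is required. The paper sidesteps this by reading condition (2) as applying to the midpoint under consideration: given the midpoint $m$ of $[ex]$, there is $v\in P_x$ with $d(v,m)\leqslant k$, and then every $ex$-path meets $P_x\subseteq N_D(v)\subseteq N_{D+k}(m)$, which is exactly (BP). Under that reading your detour through hyperbolicity is unnecessary and the rest of your argument is the paper's; under the stricter reading (one fixed geodesic) you would need a non-circular way to relate the two midpoints. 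Either way, the appeal to hyperbolicity must be removed.
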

	
	\begin{proof}Suppose that $ \hat{\mathcal{D}}(G) < \infty $. Then there is a $k$-separating family, $\mathcal{F}$, such that $ D = \sup\{ \diam (P_x) \,|\, P_x\in \mathcal{F} \} < \infty $. We claim that $\Gamma = Cay(G,S)$ satisfies $(BP)$.
		
		Consider an element $x \in G\smallsetminus N_1(e) $ and a midpoint $m \in \Gamma = Cay(G,S) $ such that $d_\Gamma(x,m) = d_\Gamma(e,m) = \frac{1}{2} d_\Gamma(e,x)$.
		
		If $d_\Gamma (e,x) = 2$, then (BP) is trivial. Suppose $d_\Gamma (e,x) > 2$. Then, there is some vertex $v\in P_x$ such that $d_\Gamma(v,m) \leqslant k$. Thus, since every $ex$-path contains a vertex in $P_x$ and  $\diam (P_x) \leqslant D $, then every $ex$-path intersects $ N_D (v) \subset N_{D+k} (m)$. Hence, (BP) is satisfied and therefore $G$ is virtually free.
		
		Suppose that $G$ is virtually free. Then, (BP) is satisfied with some constant $\Delta > 0$. Thus, for every  $x\in G\smallsetminus N_1(e)$ and every midpoint $m \in \Gamma $ such that $d_\Gamma(x,m) = d_\Gamma(e,m) = \frac{1}{2} d_\Gamma(e,x)$,  every $ex$-path intersects $N_\Delta (m)$. Pick a vertex $v_x\in N_\frac{1}{2} (m) $, then $ V(\Gamma)\cap N_{\Delta+\frac12} (v_x) $ contains a minimal $(e,x)$-separator $P_x$. The family $\mathcal{F} = \{ P_x \, : \, x\in G\smallsetminus N_1(e) \}$ is a $\frac{1}{2}$-separating family with respect to $S$ such that $ \mathcal{D}(\mathcal{F}) \leqslant 2\Delta +1  $. Thus, $ \hat{\mathcal{D}}(G) \leq  \mathcal{D}(\mathcal{F}) < \infty $.
	\end{proof}

	\section{Chordality on groups}\label{Chor}

	\begin{definition}\label{d:reduced_relation}Let $G$ be a group generated by $S$, $n>2$ and $ s_1,\ldots , s_n \in S^{\pm 1} $. We say that $ s_1 \cdots s_n = e $ is a simple relation in $ S^{\pm 1} $ when $$ s_p s_{p+1} \cdots s_{q-1} s_q = e \text{ if and only if }(p,q) = (1,n)$$
	\end{definition}
	
	\begin{remark}\label{r:reduced_relation}Let $\gamma$ be a cycle in $\Gamma$ defined by $ g_0,\ldots,g_n = g_0 $, then $ u_1 \cdots u_n = e $ is a simple relation in $S^{\pm 1}$ where $ u_k = g_{k-1}^{-1} g_k ,\, k\in\{ 1,\ldots, n \}$. On the other hand, if $ s_1 \cdots s_n = e $ is a simple relation in $S^{\pm 1}$, then $ g_0 = e,\, g_k = s_1 \cdots s_k,\, k\in\{ 1,\ldots,n \} $, defines a cycle in $\Gamma$.
	\end{remark}
	
	\begin{definition}\label{d:k,m-chordal_group}Let $G$ be a group generated by $S$ and $k,m\in\NN $. We say that $G$ is $(k,m)$-chordal with respect to $S$ if for every simple relation $ s_1 \cdots s_n = e $ in $S^{\pm 1}$ with $n \geqslant k$, then there exist $s_1 ' , \ldots , s_r ' \in S^{\pm 1}$ such that $ s_i \cdots s_j = s_1 '  \cdots  s_r ' $ with $r \leqslant \min\{ m , j-i, n-j+i-2 \} $ for some $ 1\leqslant i < j \leqslant n $. We say that $G$ is $k$-chordal if we make $m=\infty $.
	\end{definition}
	
	By Remark \ref{r:reduced_relation}, it is easy to check that a group is $(k,m)$-chordal with respect to $S$ if and only if the Cayley graph $ \Gamma $ is $(k,m)$-chordal. 
	
	\begin{remark}\label{r:k1_chordal}A group $G$ is $(k,1)$-chordal if for every simple relation $ s_1 \cdots s_n = e $ in $S^{\pm 1}$ with $n \geqslant k$, then there exists $s \in S^{\pm 1}$ such that $ s_i \cdots s_j = s $ for some $ 1\leqslant i < j \leqslant n $  with $ j-i < n-2 $.
	\end{remark}
	
	\smallskip
	
	In \cite{M2} we proved the following:
	\begin{theorem}\label{T: chordal BP} If $\Gamma$ is a $(k,1)$-chordal graph, then $\Gamma$ is quasi-isometric to a tree.
	\end{theorem}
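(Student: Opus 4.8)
The plan is to deduce this from Theorem~\ref{t:qi_tree} by showing that a $(k,1)$-chordal graph $\Gamma$ is in fact $\varepsilon$-densely $(k,1)$-chordal for a suitable $\varepsilon=\varepsilon(k)$. (One could alternatively aim at bounding the diameters of minimal vertex separators and invoke Theorem~\ref{t:separator}, but passing through Theorem~\ref{t:qi_tree} sidesteps the uniformity hypothesis there.)

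So fix a cycle $\gamma$ in $\Gamma$ with $L(\gamma)\geq k$. First I would note that any shortcut of $\gamma$ of length $\leq 1$ is necessarily a single edge $\sigma=\{p,q\}$ with $d_\gamma(p,q)\geq 2$ --- call it a \emph{chord} --- and that such a chord is automatically a \emph{strict} shortcut (its only possible intersection with $\gamma$ is $\{p,q\}$, since otherwise it would be an edge of $\gamma$). A chord splits $\gamma$ into two cycles, obtained by attaching $\sigma$ to each of the two subarcs of $\gamma$ joining $p$ and $q$; both subarcs have length $\geq 2$, so both new cycles have length $\geq 3$ and $<L(\gamma)$. Applying $(k,1)$-chordality repeatedly, I would recursively chop every cycle of length $\geq k$ that appears, halting when all cycles in hand have length $<k$; this terminates because cycle lengths strictly decrease and stay $\geq 3$.

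The crux --- and the step I expect to be the main obstacle --- is to check that every chord used in this process is a strict shortcut of the \emph{original} cycle $\gamma$, not merely of the intermediate cycle from which it was extracted. I would control this with the invariant $Q(C)$: \emph{every edge of $\gamma$ whose two endpoints lie in $V(C)$ is an edge of $C$}. This invariant does two things at once. It forces each chord to be genuine: if a chord $\{p,q\}$ of a cycle $C$ satisfying $Q(C)$ were an edge of $\gamma$, then $Q(C)$ would make it an edge of $C$, contradicting $d_C(p,q)\geq 2$; hence $d_\gamma(p,q)\geq 2$ and $\{p,q\}$ is a strict shortcut of $\gamma$. And it is preserved under a chop $C\mapsto\{C_1,C_2\}$ along a chord $\sigma$: the two subarcs of $C$ meet only at the endpoints of $\sigma$, so a $\gamma$-edge with both endpoints in $V(C_i)$ that failed to lie in $C_i$ would have to equal $\sigma$ --- impossible, since $\sigma$ is not a $\gamma$-edge. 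Along the way one also records that every vertex produced is a vertex of $\gamma$ and that each edge of $\gamma$ is inherited by exactly one of the two children at every chop (never by the chord).

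Finally I would extract the density statement. When the recursion halts, the ``leaf'' cycles have length $<k$ and the edges of $\gamma$ are partitioned by the leaf they end up in; inside each leaf cycle the $\gamma$-edges form subarcs of $\gamma$ of total length $<k$, and the endpoint of such a subarc is an endpoint of a chord (the unique non-$\gamma$ edge of the leaf at that vertex), hence a shortcut vertex. Thus $\gamma$ is covered by subarcs of length $<k$ whose endpoints are shortcut vertices, so the set of shortcut vertices is $\lceil k/2\rceil$-dense in $(\gamma,d_\gamma)$. Hence $\Gamma$ is $\varepsilon$-densely $(k,1)$-chordal with $\varepsilon=\lceil k/2\rceil\in\NN$ (cf. Remark~\ref{r:natural_ekm_chordal}), and Theorem~\ref{t:qi_tree} yields that $\Gamma$ is quasi-isometric to a tree.
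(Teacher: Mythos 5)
Your argument is correct, but there is nothing in the paper to compare it against: Theorem~\ref{T: chordal BP} is imported from \cite{M2} and stated without proof here. What you give is a self-contained derivation inside the toolkit the paper does state: you upgrade $(k,1)$-chordality to $\lceil k/2\rceil$-densely $(k,1)$-chordality by recursively chopping along chords, and then quote Theorem~\ref{t:qi_tree}; as you note, this deliberately avoids the uniformity hypothesis that the vertex-separator route through Theorem~\ref{t:separator} would require. The step that carries all the weight is the one you isolate --- a chord produced for an intermediate cycle $C$ must be a strict shortcut of the \emph{original} cycle $\gamma$ --- and your invariant $Q(C)$ does settle it: it rules out the chord being a $\gamma$-edge, which is all that is needed, since a single edge between vertices of $\gamma$ that are non-adjacent in $\gamma$ automatically satisfies $L(\sigma)=1<d_\gamma(p,q)$ and meets $\gamma$ only in $\{p,q\}$; and $Q$ is preserved under chopping for exactly the reason you give. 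Two points worth making explicit in a written version, though neither is a gap: (i) consecutive $\gamma$-edges inside a leaf cycle are consecutive in $\gamma$ itself (a vertex of the simple cycle $\gamma$ lies on exactly two $\gamma$-edges), which is what legitimises calling the leaf's $\gamma$-edges ``subarcs of $\gamma$''; and (ii) since $L(\gamma)\geq k$ forces at least one chop, every leaf contains at least one chord, so each maximal $\gamma$-subarc in a leaf has length at most $L(C)-1<k-1$ and both of its endpoints genuinely are chord endpoints, i.e.\ shortcut vertices. With those remarks the bound $\varepsilon=\lceil k/2\rceil$ and the application of Theorem~\ref{t:qi_tree} go through as you claim.
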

	
	By Theorems \ref{T: chordal BP} and \ref{T:charac} it is immediate the following:
	
	\begin{corollary}\label{c:k1_chordal}Let $G$ be a finitely generated group. If $G$ is $(k,1)$-chordal, then $G$ is virtually free.
	\end{corollary}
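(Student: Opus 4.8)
The plan is to reduce the statement immediately to the graph-theoretic results already available. First I would fix a finite generating set $S$ of $G$ (which exists since $G$ is finitely generated) and put $\Gamma = Cay(G,S)$. Because $S$ is finite and generates $G$, the graph $\Gamma$ is locally finite and connected, hence a geodesic metric space of the kind to which the results of Section~\ref{S:back} and of \cite{M2} apply.

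Next I would use the translation recorded right after Definition~\ref{d:k,m-chordal_group}, which rests on Remark~\ref{r:reduced_relation}: the group $G$ is $(k,1)$-chordal with respect to $S$ if and only if the Cayley graph $\Gamma$ is a $(k,1)$-chordal graph. Thus the hypothesis of the corollary is exactly the hypothesis of Theorem~\ref{T: chordal BP} applied to $\Gamma$, and that theorem gives that $\Gamma$ is quasi-isometric to a tree. Finally, the equivalence $(1)\Leftrightarrow(14)$ in Theorem~\ref{T:charac} (equivalently $(1)\Leftrightarrow(15)$, since being quasi-isometric to a tree is the same as satisfying (BP)) yields that $G$ is virtually free. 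Since being virtually free is independent of the choice of finite generating set, and so is the quasi-isometry type of the Cayley graph, the conclusion does not depend on which $S$ witnesses the chordality hypothesis.

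I do not expect any substantial obstacle here: the argument is essentially bookkeeping, chaining Theorem~\ref{T: chordal BP} with one of the known equivalences for virtual freeness. The only point requiring a little care is the passage between the algebraic formulation of $(k,1)$-chordality (phrased via simple relations in $S^{\pm1}$, as in Definition~\ref{d:k,m-chordal_group} and Remark~\ref{r:k1_chordal}) and its geometric counterpart (phrased via cycles and strict shortcuts in $\Gamma$), but this is precisely what Remark~\ref{r:reduced_relation} supplies. Note also that the corollary only claims one implication; the converse, giving an actual characterization, is the content of the stronger Corollary~\ref{c:virtually_free} proved later, so none is attempted at this stage.
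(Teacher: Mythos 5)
Your argument is correct and is exactly the route the paper takes: translate the group-theoretic $(k,1)$-chordality hypothesis to the Cayley graph via Remark~\ref{r:reduced_relation}, apply Theorem~\ref{T: chordal BP} to conclude that the graph is quasi-isometric to a tree, and invoke the equivalence $(1)\Leftrightarrow(14)$ of Theorem~\ref{T:charac}. The additional remarks on independence of the generating set are harmless bookkeeping that the paper leaves implicit.
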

	
	\begin{definition}\label{d:ikm_chordal}Let $G$ be a group generated by $S$ and $i_0,k,m\in\NN $. We say that $G$ is $(i_0,k,m)$-chordal with respect to $S$ if for every simple relation $ s_1 \cdots s_n = e $ in $S^{\pm 1}$ with $n \geqslant k$, then there exist $s_1 ' , \ldots , s_r ' \in S^{\pm 1}$ such that $ s_i \cdots s_j = s_1 '  \cdots  s_r ' $ with $r \leqslant \min\{ m , j-i, n-j+i-2 \} $ for some $ 1\leqslant i \leqslant i_0 $ and $ i < j \leqslant n $. We say that $G$ is $[i_0 , k ]$-chordal if we make $m=\infty $.
	\end{definition}
	
	\begin{theorem}\label{t:ikm_chordal}Let $G$ be a group generated by $S$, $\Gamma = Cay(G,S)$ the Cayley graph and $k,m>0$. Then, $\Gamma$ is $\varepsilon$-densely $(k,m)$-chordal for some $\varepsilon > 0$ if and only if $G$ is $(i_0,k,m)$-chordal with respect to $S$ for some $i_0 > 0$.
	\end{theorem}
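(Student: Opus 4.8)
The plan is to prove both implications directly, passing between a simple relation $s_1\cdots s_n=e$ and the cycle $\gamma$ it determines in $\Gamma$ via Remark \ref{r:reduced_relation} (writing $g_0=e$, $g_\ell=s_1\cdots s_\ell$), and exploiting that $\Gamma$ is vertex-transitive: a cyclic rotation $s_{t+1}\cdots s_{t+n}=e$ (indices mod $n$) of a simple relation is again a simple relation, whose cycle is the translate $g_t^{-1}\gamma$. We may assume $\varepsilon,k,m$ integral (Remark \ref{r:natural_ekm_chordal}). The link used throughout is that a shortcut of $\gamma$ joining cycle vertices $g_a$ and $g_b$, taken as an edge-path, spells a word in $S^{\pm1}$ equal to $g_a^{-1}g_b$, which is exactly the combinatorial content of the chordality condition.

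\emph{$\varepsilon$-densely $(k,m)$-chordal $\Rightarrow$ $(i_0,k,m)$-chordal.} I would take $i_0=\max\{k,\,2\varepsilon+2\}$. Given a simple relation of length $n\ge k$, the hypothesis provides strict shortcuts of length $\le m$ whose shortcut vertices are $\varepsilon$-dense in $(\gamma,d_\gamma)$; in particular at least one strict shortcut exists. If $n\le i_0$, pick any such $\sigma$, joining $g_a$ and $g_b$ with $0\le a<b\le n-1$, and set $i=a+1\le n\le i_0$, $j=b$; then $s_i\cdots s_j=s_1'\cdots s_r'$ with $r=L(\sigma)\le m$, and since $r<d_\gamma(g_a,g_b)=\min\{b-a,\,n-b+a\}$ the inequalities $r\le\min\{j-i,\,n-j+i-2\}$ and $i<j$ hold automatically. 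If $n>i_0$, then $\varepsilon<n/2$, so the ball of radius $\varepsilon$ about the vertex $g_{\varepsilon}$ (index $\varepsilon$) in $(\gamma,d_\gamma)$ does not wrap around $\gamma$ and lies in $\{g_1,\dots,g_{2\varepsilon-1}\}$; by density it contains a shortcut vertex $g_c$ with $1\le c\le i_0-1$, and the strict shortcut through $g_c$ gives the required chord with left index $\le i_0$ exactly as above.

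\emph{$(i_0,k,m)$-chordal $\Rightarrow$ $\varepsilon$-densely $(k,m)$-chordal.} I would take $\varepsilon=m+i_0$. Fix a cycle $\gamma$ of length $n\ge k$ with associated simple relation, and for each $t$ apply $(i_0,k,m)$-chordality to the rotated relation $s_{t+1}\cdots s_{t+n}=e$; translating back by $g_t$ yields a shortcut $\sigma_t$ of $\gamma$ with $L(\sigma_t)\le m$, one of whose endpoints $p_t$ satisfies $d_\gamma(g_t,p_t)\le i_0-1$. Since $\sigma_t$ need not be strict, I would list the vertices $p_t=z_0,z_1,\dots,z_\ell$ of $\sigma_t\cap\gamma$ in the order met along $\sigma_t$, cut $\sigma_t$ into arcs $\tau_a=\sigma_t[z_{a-1},z_a]$, and keep those $\tau_a$ that are shortcuts (which are then automatically strict). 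Because $\sum_a L(\tau_a)=L(\sigma_t)<d_\gamma(p_t,q_t)\le\sum_a d_\gamma(z_{a-1},z_a)$, at least one $\tau_a$ is a shortcut; let $a_0$ be the least such index. Every earlier arc fails to be a shortcut, so $d_\gamma(z_{b-1},z_b)\le L(\tau_b)$ for $b<a_0$, whence $d_\gamma(p_t,z_{a_0-1})\le\sum_{b<a_0}L(\tau_b)\le L(\sigma_t)\le m$; thus $\tau_{a_0}$ is a kept strict shortcut of length $\le m$ carrying a shortcut vertex $z_{a_0-1}$ with $d_\gamma(g_t,z_{a_0-1})\le m+i_0-1$. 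Since every point of $\gamma$ lies within $\tfrac12$ of some $g_t$, the finite family of all kept strict shortcuts, over all $t$, has shortcut vertices forming an $(m+i_0)$-dense subset of $(\gamma,d_\gamma)$.

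I expect the strictness demanded in the second implication to be the main obstacle: the chord furnished by $(i_0,k,m)$-chordality is an arbitrary edge-path and may reenter $\gamma$ many times, so it cannot be used as is. The point that resolves it is that the sub-arcs one is forced to discard are, by definition, not shortcuts, so each has $d_\gamma$-span bounded by its own length; hence the total $d_\gamma$-displacement incurred before reaching a genuine strict shortcut is at most $L(\sigma_t)\le m$, which is what keeps the new shortcut vertices within the cycle metric of $p_t$. The remaining points are routine: cyclic rotations of simple relations are simple (immediate from Remark \ref{r:reduced_relation}); the chords may be assumed embedded edge-paths with $\sigma_t\cap\gamma$ finite; and the degenerate cases of very short cycles are harmless.
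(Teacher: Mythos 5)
Your proof is correct, and its overall architecture coincides with the paper's: both directions pass through the correspondence of Remark \ref{r:reduced_relation} between simple relations and cycles, the forward direction extracts a strict shortcut whose vertex lies near the start of the cycle by applying $\varepsilon$-density to a point at arc-distance $\varepsilon$ from $g_0$, and the reverse direction obtains a shortcut vertex near an arbitrary point $q\in\gamma$ by relabelling the cycle so that $q$ sits at the basepoint (your cyclic rotation of the relation is exactly the paper's relabelling $p=g_0$, $p'=g_1$). The one place where you genuinely diverge is the strictness issue in the reverse direction, and there your treatment is the more careful one. The paper simply asserts that the chord $\sigma$ from $g_{i-1}$ to $g_j$ ``contains a strict shortcut $\sigma_0$ with an associated shortcut vertex $g_{i-1}$''; as you observe, the initial sub-arc of $\sigma$ up to its first return to $\gamma$ need not be a shortcut at all, so the strict shortcut one extracts may be based at a different vertex. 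Your decomposition of $\sigma$ at the points of $\sigma\cap\gamma$, together with the observation that every discarded sub-arc satisfies $d_\gamma(z_{b-1},z_b)\leqslant L(\tau_b)$ and hence displaces the shortcut vertex by at most $L(\sigma)\leqslant m$ along $\gamma$, closes this gap cleanly; the price is the slightly larger density constant $\varepsilon=m+i_0$ in place of the paper's $\varepsilon=i_0$ (and its Remark \ref{r:ikm_chordal}), which is harmless since the statement is existential in $\varepsilon$. The differing constant $i_0=\max\{k,2\varepsilon+2\}$ versus the paper's $i_0=2\varepsilon$ in the forward direction is likewise immaterial.
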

	
	\begin{proof}Let $G$ be a group generated by $S$ and suppose that the Cayley graph $\Gamma$ is $\varepsilon$-densely $(k,m)$-chordal for some $\varepsilon,k,m \in\NN $. Let $ s_1 \cdots s_n = e $ be a simple relation in $S^{\pm 1}$ with $n \geqslant k$ and consider $i_0=2\varepsilon$. Then, define the cycle $\gamma$ by $g_0 = e$, $g_l = s_1 \cdots s_l$ $\forall \, l\in\{1,\ldots, n \}$. Since $L(\gamma) = n \geqslant k$, then there exist strict shortcuts $\sigma _1 , \ldots , \sigma _t $ with $L(\sigma _i) \leqslant m \, \forall\, i, $ and such that their associated shortcut vertices define an $\varepsilon$-dense subset in $(\gamma,d_\gamma)$. Therefore, there exists a shortcut $\sigma _\lambda$ with $r = L(\sigma_\lambda) \leqslant m$ and associated shortcut vertices $ g_{i-1} ,g_j ,\, i < j,$ such that $ 1 \leqslant i \leqslant 2\varepsilon=i_0$. Since $\sigma _\lambda$ is a path from $g_{i-1}$ to $g_j$, then there exist $s_1 ' ,\ldots , s_r ' \in S^{\pm 1} $ such that $\sigma _\lambda$ is the path defined by $ h_0 = g_{i-1} ,\, h_l = g_{i-1} s_1 ' \cdots s_l ' $, $\forall \, l\in\{1,\ldots, r \}$ where $r \leqslant \min\{ j-i, n-j+i-2 \}$. Note that $$ g_{i-1} s_i \cdots s_j = g_j = h_r = g_{i-1} s_1 ' \cdots s_r ' $$ Thus, $ s_i \cdots s_j = s_1 ' \cdots s_r ' $ with $r \leqslant \min\{ m , j-i, n-j+i-2 \} $ with $ i \leqslant i_0 $ and $G$ is $(i_0,k,m)$-chordal.\\
		
		Reciprocally, suppose that $G$ is $(i_0,k,m)$-chordal with respect to $S$ for some $i_0,k,m\in\NN$. Let $\gamma$ be a cycle in $\Gamma$ with length $L(\gamma) = n \geqslant k$ and let $\varepsilon = i_0 $. 
		
		Fix a point $q\in \gamma$ and suppose $p$, $p'$ two adjacent vertices such that $q\in [pp']$ and $q\neq p'$. Let us label the ordered vertices in $\gamma$ as $g_0,\dots,g_n=g_0$ so that $p=g_0$ and $p'=g_1$. If we define $ s_l = g_{l-1} ^{-1} g_l,\, l\in\{ 1,\ldots,n \} $, then $ s_1 \cdots s_n = e $ is a simple relation. By hypothesis, there exist $i,j,r\in\NN$, $s_1 ' , \ldots , s_r ' \in S^{\pm 1}$ such that $ s_i \cdots s_j = s_1 '  \cdots  s_r ' $ with $r \leqslant \min\{ m , j-i, n-j+i-2 \} $ for some $ 1\leqslant i \leqslant i_0 $ and $ i < j \leqslant n $. Let $\sigma$ be the path defined by the vertices $ h_0 = g_{i-1} ,\, h_l = g_{i-1} s_1 ' \cdots s_l ' ,\, l\in\{ 1,\ldots , r \}$. Since $ h_r = g_{i-1} s_1 '  \cdots  s_r ' = g_{i-1} s_i \cdots s_j = g_j $ and $L(\sigma) = r \leqslant \min\{ j-i, n-j+i-2 \} = d_\gamma (g_{i-1} , g_j) $, then $\sigma$ contains an strict shortcut $\sigma _0$ with an associated shortcut vertex $g_{i-1}$. Since $q\in [g_0g_1], d_\gamma(q,g_{i-1})  < i_0 = \varepsilon  $ and $ L(\sigma_0) \leqslant L(\sigma) = r \leqslant m $.
	\end{proof}

	\begin{remark}\label{r:ikm_chordal}In fact, from the proof we obtain that if $\Gamma$ is $\varepsilon$-densely $(k,m)$-chordal, then $G$ is $(2\varepsilon ,k,m)$-chordal. On the other hand, if $G$ is $(i_0,k,m)$-chordal, then $\Gamma$ is $i_0$-densely $(k,m)$-chordal.
	\end{remark}
	
	\begin{remark}\label{r:ikm_chordal_infty}Theorem \ref{t:ikm_chordal} remains true for $ m'=m=\infty $.
	\end{remark}

	Hence, by Theorem \ref{t:qi_tree}, we get the following corollary.
	
	\begin{corollary}\label{c:qi_tree}A finitely generated group $G$ is quasi-isometric to a tree if and only if $G$ is $(i_0,k,m)$-chordal, for some $i_0,k,m$.
	\end{corollary}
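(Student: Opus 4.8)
The plan is to combine Theorem~\ref{t:ikm_chordal} with the previously recorded characterization of being quasi-isometric to a tree, Theorem~\ref{t:qi_tree}. The statement is a biconditional, so I would prove the two implications separately, with Theorem~\ref{t:ikm_chordal} serving as the bridge that transfers each direction between the group-theoretic language and the graph-theoretic language. Throughout, let $\Gamma = Cay(G,S)$ be a Cayley graph of $G$ with respect to some finite generating set $S$.

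First suppose $G$ is $(i_0,k,m)$-chordal with respect to $S$ for some $i_0,k,m\in\NN$. By Theorem~\ref{t:ikm_chordal} (in fact by the more precise Remark~\ref{r:ikm_chordal}), the Cayley graph $\Gamma$ is $i_0$-densely $(k,m)$-chordal. Then Theorem~\ref{t:qi_tree} applies directly and yields that $\Gamma$ is quasi-isometric to a tree; since quasi-isometry type of $\Gamma$ does not depend on the chosen finite generating set (as recalled in Section~\ref{S:back}), $G$ is quasi-isometric to a tree. Conversely, suppose $G$ is quasi-isometric to a tree. Then $\Gamma$ is quasi-isometric to a tree, so by Theorem~\ref{t:qi_tree} there exist constants $\varepsilon,k,m > 0$ such that $\Gamma$ is $\varepsilon$-densely $(k,m)$-chordal; by Remark~\ref{r:natural_ekm_chordal} we may take $\varepsilon,k,m\in\NN$. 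Applying Theorem~\ref{t:ikm_chordal} (again via Remark~\ref{r:ikm_chordal}, which gives the explicit value $i_0 = 2\varepsilon$), we conclude that $G$ is $(i_0,k,m)$-chordal with respect to $S$ with $i_0 = 2\varepsilon$. This closes the cycle and proves the equivalence.

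The only genuine subtlety — and it is not really an obstacle, just a point that deserves care — is the quantifier structure and the passage between generating sets. The property ``$G$ is $(i_0,k,m)$-chordal'' in Definition~\ref{d:ikm_chordal} is stated with respect to a fixed $S$, whereas ``quasi-isometric to a tree'' is a generating-set-independent notion; so in the forward direction one should note that $(i_0,k,m)$-chordality for one finite generating set already gives $\Gamma$ quasi-isometric to a tree, and in the reverse direction one recovers $(i_0,k,m)$-chordality for \emph{any} chosen finite $S$, because Theorem~\ref{t:ikm_chordal} is phrased for an arbitrary such $S$ and $\Gamma = Cay(G,S)$ is quasi-isometric to the tree through the invariance of quasi-isometry type. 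Since all the heavy lifting is already contained in Theorem~\ref{t:qi_tree} and Theorem~\ref{t:ikm_chordal}, the proof is essentially a two-line citation of these two results once the bookkeeping of constants (via Remarks~\ref{r:natural_ekm_chordal} and~\ref{r:ikm_chordal}) is in place.
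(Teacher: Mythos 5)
Your proposal is correct and follows exactly the paper's route: the corollary is obtained by chaining Theorem~\ref{t:ikm_chordal} (the group--graph translation) with Theorem~\ref{t:qi_tree}, which is all the paper does. The extra care you take with the constants (via Remarks~\ref{r:natural_ekm_chordal} and~\ref{r:ikm_chordal}) and with the independence of the generating set is sound and slightly more explicit than the paper's one-line derivation.
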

	
	Thus, by Theorem \ref{T:charac}, we obtain the following result.
	
	\begin{corollary}\label{c:virtually_free}A finitely generated group $G$ is virtually free if and only if $G$ is $(i_0,k,m)$-chordal, for some $i_0,k,m$.
	\end{corollary}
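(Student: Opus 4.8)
The plan is to read off the corollary directly from Corollary~\ref{c:qi_tree} together with the list of equivalences collected in Theorem~\ref{T:charac}. Corollary~\ref{c:qi_tree} already asserts that a finitely generated group $G$ is quasi-isometric to a tree if and only if it is $(i_0,k,m)$-chordal (for some $i_0,k,m$, with respect to some finite generating set). Hence the only additional ingredient needed is the classical fact, recorded as item~(14) of Theorem~\ref{T:charac}, that the Cayley graph of $G$ is quasi-isometric to a tree precisely when $G$ is virtually free. Chaining these two biconditionals yields the statement.

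In slightly more detail, I would first fix a finite generating set $S$ of $G$ and set $\Gamma=Cay(G,S)$; by the discussion in Section~\ref{S:back} the quasi-isometry type of $\Gamma$ does not depend on this choice, so the phrase ``$G$ is quasi-isometric to a tree'' is well defined. If $G$ is $(i_0,k,m)$-chordal with respect to $S$, then by Theorem~\ref{t:ikm_chordal} the graph $\Gamma$ is $i_0$-densely $(k,m)$-chordal, hence quasi-isometric to a tree by Theorem~\ref{t:qi_tree}, hence virtually free by Theorem~\ref{T:charac}(14). Conversely, if $G$ is virtually free, then $\Gamma$ is quasi-isometric to a tree by Theorem~\ref{T:charac}(14), so by Theorem~\ref{t:qi_tree} it is $\varepsilon$-densely $(k,m)$-chordal for some $\varepsilon,k,m$, and then Theorem~\ref{t:ikm_chordal} (equivalently Remark~\ref{r:ikm_chordal}) shows that $G$ is $(2\varepsilon,k,m)$-chordal with respect to $S$.

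Since every step is a direct appeal to a statement already proved in the excerpt, there is essentially no obstacle to overcome. The only point requiring a moment's care is bookkeeping of the quantifiers on $i_0$, $k$, $m$ and checking that the dependence on the generating set is harmless: this is fine because $(i_0,k,m)$-chordality of $G$ with respect to $S$ is equivalent to the corresponding chordality of $Cay(G,S)$ (as noted after Definition~\ref{d:ikm_chordal} via Theorem~\ref{t:ikm_chordal}), and the relevant quasi-isometry invariance is exactly the content of Theorem~\ref{t:qi_tree}.
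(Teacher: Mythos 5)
Your proposal is correct and follows exactly the paper's route: the paper derives Corollary~\ref{c:virtually_free} by combining Corollary~\ref{c:qi_tree} (itself obtained from Theorem~\ref{t:ikm_chordal} and Theorem~\ref{t:qi_tree}) with the equivalence of virtual freeness and being quasi-isometric to a tree from Theorem~\ref{T:charac}. Your additional remarks on generating-set independence and the quantifier bookkeeping are sound but not needed beyond what the paper already records.
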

	
	Let us recall also the following results.
	
	\begin{theorem}[Theorem 4, \cite{M1}]\label{t:hyperbolicity_1}If a graph $\Gamma$ is $\varepsilon$-densely $(k,m)$-chordal, then $\Gamma$ is $\delta$-hyperbolic with $ \delta \leqslant \max\{ \frac{k}{4} , \varepsilon + m \} $.
	\end{theorem}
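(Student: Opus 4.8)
The plan is to verify the Rips thin‑triangles condition directly while keeping track of constants: I would show that an arbitrary geodesic triangle is $\delta$‑thin with $\delta=\max\{k/4,\varepsilon+m\}$, the value $k/4$ coming from the regime in which an auxiliary cycle is too short to admit shortcuts and the value $\varepsilon+m$ from the regime in which it is long enough for the hypothesis to apply. By Remark~\ref{r:natural_ekm_chordal} I may assume $\varepsilon,k,m\in\NN$, and I take $m$ finite, the statement being vacuous when $m=\infty$.

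First I would fix a geodesic triangle $T=\{x,y,z\}$, concentrate on one side $[xy]$, choose $w\in[xy]$, and set $A=[yz]\cup[zx]$, a connected subgraph containing $x$ and $y$. The key construction is the maximal sub‑arc of $[xy]$ around $w$ that avoids $A$: parametrising $[xy]$ by arclength from $x$, let $p$ be the last point of $[xy]$ weakly before $w$ lying on $A$ and $q$ the first point weakly after $w$ lying on $A$ — both exist since $A$ is closed and contains $x,y$ — and let $\a$ be the sub‑arc from $p$ to $q$, of length $\ell=d(p,q)$. By construction $\a\cap A=\{p,q\}$, and since $[xy]$ is geodesic, $d(w,p)+d(w,q)=\ell$. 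If $\min\{d(w,p),d(w,q)\}\le\varepsilon+m$, then, as $p,q\in A$, we are done; so assume the minimum exceeds $\varepsilon+m$, whence $\ell>2(\varepsilon+m)$.

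Next I would close $\a$ into a simple cycle. Pick a shortest simple path $\b$ in $A$ from $q$ to $p$; then $L(\b)\ge d(p,q)=\ell$, so $\g_0:=\a\cup\b$ is a simple cycle (its two arcs meet only at $p,q$) with $L(\g_0)=\ell+L(\b)\ge 2\ell$, and in particular $\a$ is (weakly) the shorter of the two arcs of $\g_0$ cut off by $\{p,q\}$, so $d_{\g_0}$ agrees with $d$ on $\a$. Now split on $L(\g_0)$. If $L(\g_0)<k$, then $\ell\le L(\g_0)/2<k/2$, hence $\min\{d(w,p),d(w,q)\}\le\ell/2<k/4$ and $d(w,A)<k/4$. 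If $L(\g_0)\ge k$, apply the hypothesis to $\g_0$: the shortcut vertices of the resulting strict shortcuts are $\varepsilon$‑dense in $(\g_0,d_{\g_0})$, so some shortcut vertex $u$ has $d_{\g_0}(w,u)<\varepsilon$; let $\s$ be a strict shortcut with endpoint $u$, length $L(\s)\le m$, other endpoint $u'$. A strict shortcut cannot join two points of $\a$, because there $d_{\g_0}=d$, so $L(\s)<d_{\g_0}(u,u')$ would give $L(\s)<d(u,u')$, impossible for a path from $u$ to $u'$. Hence $u$ or $u'$ lies on $\b\subseteq A$, and in either case $d(w,A)\le d(w,u)+L(\s)\le d_{\g_0}(w,u)+m<\varepsilon+m$. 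Thus $d(w,A)\le\max\{k/4,\varepsilon+m\}$; running this over the three sides gives that $T$ is $\max\{k/4,\varepsilon+m\}$‑thin, and as $\Gamma$ is geodesic, $\delta\le\max\{k/4,\varepsilon+m\}$.

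The main obstacle — really the only subtlety — is that the three geodesic sides of $T$ need not pairwise intersect only at the vertices, so the perimeter of $T$ is in general not a simple closed curve and cannot be fed directly into the definition of $\varepsilon$‑densely $(k,m)$‑chordal; this is precisely why one passes to the maximal $A$‑avoiding sub‑arc $\a$ of $[xy]$ and to an honest simple path $\b$ inside $A$, and why it is essential to check that $\a$ is the shorter arc of $\g_0$, which is what forces every strict shortcut of $\g_0$ to have an endpoint on $\b$, i.e.\ on $A$. Besides this, one must dispatch the routine degenerate cases (for instance $w\in A$, where $d(w,A)=0$, and $\ell$ too small for $\g_0$ to be a genuine cycle), all of which follow from $\min\{d(w,p),d(w,q)\}\le\ell/2$.
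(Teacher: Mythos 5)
The paper does not actually prove this statement: it is quoted from \cite{M1} (Theorem 4 there) and used as a black box, so there is no in-paper argument to compare yours against. That said, your proof is essentially correct and is the standard direct verification of the Rips condition that such a result calls for: isolate the maximal sub-arc $\a$ of $[xy]$ around $w$ avoiding $A=[yz]\cup[zx]$, close it up inside $A$ to a simple cycle $\g_0$ in which $\a$ is the short arc, and split according to whether $L(\g_0)<k$ (yielding the $k/4$ bound) or $L(\g_0)\geqslant k$ (where $\varepsilon$-density of the shortcut vertices, together with your observation that no strict shortcut of $\g_0$ can join two points of the geodesic arc $\a$, yields the $\varepsilon+m$ bound). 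The one point you pass over is that the paper's notion of cycle, and of shortcut vertex, is combinatorial: $\g_0$ must be a closed edge-path through vertices of $\Gamma$, whereas your $p$ and $q$ (and the corners $x,y,z$ and the point $w$) are a priori arbitrary points of the geodesic space $\Gamma$. This is repaired by first treating triangles whose corners are vertices and whose sides are edge-paths --- then $[xy]\cap A$ is a union of vertices and whole edges, so $p$ and $q$ are automatically vertices and $\g_0$ is a genuine cycle --- and then passing to arbitrary triangles, at the cost of a small additive perturbation if one insists on the exact constant $\max\{\frac{k}{4},\varepsilon+m\}$.
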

	
	\begin{theorem}[Theorem 8, \cite{M1}]\label{t:hyperbolicity_2}If a graph $\Gamma$ is $\delta$-hyperbolic, then $\Gamma$ is $\varepsilon$-densely $k$-chordal, with $\varepsilon = 2\delta + 1$ and $k = 4\delta + 3$.
	\end{theorem}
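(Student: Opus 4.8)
The plan is to use the Rips thin-triangle condition to manufacture, around every point of a long cycle, a genuine strict shortcut, and to keep control of where its endpoints sit on the cycle. Since the statement concerns $\varepsilon$-dense $k$-chordality with $m=\infty$, I only need to produce strict shortcuts (of arbitrary length) whose associated shortcut vertices form a $(2\delta+1)$-dense subset of $(\gamma,d_\gamma)$; the lengths of the shortcuts themselves are unconstrained. Throughout, fix a cycle $\gamma$ with $L(\gamma)=n\geqslant 4\delta+3$, so that $\lfloor n/2\rfloor\geqslant 2\delta+1$.

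First I would isolate a purely metric reduction that converts ``ambient chords'' into shortcut vertices. The claim is: if $p,z\in\gamma$ satisfy $d_\Gamma(p,z)<d_\gamma(p,z)$, then $\gamma$ has a strict shortcut with an associated shortcut vertex $u$ for which $d_\gamma(p,u)\leqslant d_\Gamma(p,z)$. Indeed, fix a geodesic $[pz]$ of $\Gamma$; it cannot lie inside $\gamma$, since any path in $\gamma$ from $p$ to $z$ has length at least $d_\gamma(p,z)>L([pz])$. Break $[pz]$ into its maximal subsegments that alternately lie on $\gamma$ and leave $\gamma$. Each on-$\gamma$ run is at once a $\Gamma$-geodesic and a path in $\gamma$, hence realises $d_\gamma=d_\Gamma$ between its endpoints, while each off-$\gamma$ excursion has interior disjoint from $\gamma$ and is therefore a candidate strict shortcut. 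If every excursion also had $d_\Gamma=d_\gamma$ between its endpoints, then summing the piece-lengths along $[pz]$ and using the triangle inequality for $d_\gamma$ would give $d_\Gamma(p,z)\geqslant d_\gamma(p,z)$, a contradiction; so some excursion is a genuine strict shortcut. Walking from $p$, an easy induction shows that all breakpoints preceding the first strict excursion are cycle-monotone and satisfy $d_\gamma(p,\cdot)=d_\Gamma(p,\cdot)$, so the starting endpoint $u$ of that first strict excursion is a shortcut vertex with $d_\gamma(p,u)=d_\Gamma(p,u)\leqslant d_\Gamma(p,z)$, as claimed.

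Granting this lemma, the theorem reduces to a single geometric statement: for every $p\in\gamma$ there is a point $z\in\gamma$ that is \emph{ambient-near but cycle-far}, i.e. $d_\Gamma(p,z)\leqslant 2\delta$ while $d_\gamma(p,z)>2\delta$. Any such $z$ automatically satisfies $d_\Gamma(p,z)\leqslant 2\delta<d_\gamma(p,z)$, so the reduction lemma furnishes a shortcut vertex within cycle-distance $2\delta<2\delta+1=\varepsilon$ of $p$; letting $p$ range over the vertices of $\gamma$ (every point of $\gamma$ lies within $\tfrac12$ of a vertex) then makes the collected shortcut vertices $(2\delta+1)$-dense. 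To produce $z$ I would use the antipode $p^\ast$, with $d_\gamma(p,p^\ast)=\lfloor n/2\rfloor\geqslant 2\delta+1$, together with the Rips condition. If some sub-arc of $\gamma$ issuing from $p$ is not an ambient geodesic, the first place where geodesity fails already supplies an ambient-near, cycle-far chord. If instead the arcs from $p$ towards $p^\ast$ are ambient geodesics, I would inscribe a geodesic triangle on $p$, $p^\ast$ and a subdivision point of one arc, and apply $\delta$-thinness to fellow-travel the two sides issuing from $p$, forcing a point of the opposite arc into $N_{2\delta}(p)$ while keeping it cycle-far.

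The main obstacle is exactly this last step: extracting the \emph{sharp} ambient bound $2\delta$, rather than the logarithmic bound produced by the naive estimate that ``a geodesic stays close to any path with the same endpoints,'' and simultaneously certifying that the witnessing point is cycle-far by more than $2\delta$. I expect the resolution to rest on the dichotomy above: non-geodesic arcs hand over near chords at small scale for free, whereas the fully-geodesic case is precisely where the \emph{sharp} $\delta$-thinness of geodesic triangles (not of general geodesic-versus-path bigons) can be invoked, and the hypothesis $k=4\delta+3$ is what guarantees that the arcs entering the antipodal triangle are long enough ($\geqslant 2\delta+1$) for it to be nondegenerate. Once the geometric statement is secured, tracking the constants gives $\varepsilon=2\delta+1$ and $k=4\delta+3$ directly.
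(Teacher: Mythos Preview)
The paper does not prove this theorem: it is quoted as Theorem~8 of \cite{M1} and invoked as a black box, with no argument supplied here. There is therefore no ``paper's own proof'' to compare your proposal against; for the actual argument you would have to consult \cite{M1}.

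On the merits of your sketch: the reduction lemma is correct and cleanly stated --- decomposing a geodesic $[pz]$ into on-cycle and off-cycle runs and taking the first genuine excursion does yield a strict shortcut with a shortcut vertex $u$ satisfying $d_\gamma(p,u)=d_\Gamma(p,u)\leqslant d_\Gamma(p,z)$. The part you yourself flag as the ``main obstacle'' is indeed where the content lies, and as written it is not closed. In the ``fully geodesic'' branch you propose inscribing a triangle on $p$, $p^\ast$ and a subdivision point of one arc and applying $\delta$-thinness; but thinness only places points of one side within $\delta$ of the \emph{union} of the other two sides, so the fellow-traveller you obtain could land on the short initial segment of the same arc near $p$ (hence cycle-near, not cycle-far), or on the third geodesic side which need not lie on $\gamma$ at all. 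Getting the sharp bound $d_\Gamma(p,z)\leqslant 2\delta$ together with $d_\gamma(p,z)>2\delta$ requires a more careful choice of triangle (or bigon) and of the reference point on it, and this is exactly what the hypothesis $k=4\delta+3$ is calibrated to make work; your dichotomy is the right shape, but the constants do not yet fall out of the sketch as stated.
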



	By Theorems \ref{t:ikm_chordal}, \ref{t:hyperbolicity_1} and \ref{t:hyperbolicity_2} we obtain also the following results.
	
	\begin{corollary}\label{c:ikm_chordal_hyperbolic_1}If a finitely generated group $G$ is $(i_0,k,m)$-chordal, then $G$ is $\delta$-hyperbolic, with $ \delta \leqslant \max\{ \frac{k}{4} , m + i_0 \} $.
	\end{corollary}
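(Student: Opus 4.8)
The plan is to pass to a Cayley graph and simply chain together the two transfer results already recorded above, so the argument is essentially bookkeeping. Fix a finite generating set $S$ realizing the hypothesis, so that $G$ is $(i_0,k,m)$-chordal with respect to $S$, and set $\Gamma = Cay(G,S)$. By the second assertion of Remark~\ref{r:ikm_chordal} (the nontrivial direction of Theorem~\ref{t:ikm_chordal}), the hypothesis that $G$ is $(i_0,k,m)$-chordal with respect to $S$ implies that $\Gamma$ is $i_0$-densely $(k,m)$-chordal. It is essential to invoke this direction, and not the other one in Remark~\ref{r:ikm_chordal}, because here the density parameter comes out exactly as $\varepsilon = i_0$ rather than $2i_0$, and this is precisely what produces the constant claimed in the statement.

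Next I would apply Theorem~\ref{t:hyperbolicity_1} to the graph $\Gamma$ with $\varepsilon = i_0$: since $\Gamma$ is $i_0$-densely $(k,m)$-chordal, $\Gamma$ is $\delta$-hyperbolic with $\delta \leqslant \max\{ \frac{k}{4}, i_0 + m \}$. Note that $i_0, k, m$ are already natural numbers by Definition~\ref{d:ikm_chordal}, so no rounding step in the spirit of Remark~\ref{r:natural_ekm_chordal} is needed at this point.

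Finally I would recall from Section~\ref{S:back} that a finitely generated group is declared $\delta$-hyperbolic exactly when some one of its Cayley graphs is $\delta$-hyperbolic (hyperbolicity being a quasi-isometry invariant, independent of the generating set up to the value of the constant). Since we have exhibited such a Cayley graph, namely $\Gamma = Cay(G,S)$, with $\delta \leqslant \max\{ \frac{k}{4}, i_0 + m \}$, we conclude that $G$ itself is $\delta$-hyperbolic with this bound, which is the assertion of the corollary.

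As for difficulties, there is essentially no analytic content here: the corollary is a purely formal consequence of Theorem~\ref{t:ikm_chordal} and Theorem~\ref{t:hyperbolicity_1}. The only point that demands care is the tracking of constants — one must feed the density parameter $i_0$ (and not $2i_0$) into Theorem~\ref{t:hyperbolicity_1}, and one should read the conclusion as applying to the Cayley graph attached to the generating set $S$ that witnesses $(i_0,k,m)$-chordality, from which hyperbolicity of $G$ then follows with the displayed estimate.
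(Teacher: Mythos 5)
Your argument is correct and is exactly the paper's route: the corollary is stated as an immediate consequence of Theorem~\ref{t:ikm_chordal} (in the quantitative form of Remark~\ref{r:ikm_chordal}, giving that $\Gamma$ is $i_0$-densely $(k,m)$-chordal) combined with Theorem~\ref{t:hyperbolicity_1} applied with $\varepsilon=i_0$. Your care in using the $i_0$ (rather than $2\varepsilon$) direction of the remark is precisely what reproduces the bound $\delta\leqslant\max\{\frac{k}{4},m+i_0\}$.
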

	
	\begin{corollary}\label{c:ikm_chordal_hyperbolic_2}If a finitely generated group $G$ is $\delta$-hyperbolic, then $G$ is $ [4\delta + 2 , 4\delta + 3] $-chordal.
	\end{corollary}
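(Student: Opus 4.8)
The plan is simply to chain together the results already in hand. First I would unwind the hypothesis: by definition, $G$ being $\delta$-hyperbolic means that some Cayley graph $\Gamma = Cay(G,S)$ of $G$ with respect to a finite generating set $S$ is $\delta$-hyperbolic as a geodesic metric space, and the conclusion ``$G$ is $[4\delta+2,4\delta+3]$-chordal'' is then to be read with respect to this same $S$. If one wants the parameters to be natural numbers, one passes at the outset to $\lceil\delta\rceil$ (a $\delta$-hyperbolic space is $\lceil\delta\rceil$-hyperbolic), so this is only a cosmetic point.

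Next I would apply Theorem \ref{t:hyperbolicity_2} to $\Gamma$: since $\Gamma$ is $\delta$-hyperbolic, it is $\varepsilon$-densely $k$-chordal with $\varepsilon = 2\delta+1$ and $k = 4\delta+3$. Finally I would transfer this statement back to the group. By Remark \ref{r:ikm_chordal}, if $\Gamma$ is $\varepsilon$-densely $(k,m)$-chordal then $G$ is $(2\varepsilon,k,m)$-chordal with respect to $S$, and by Remark \ref{r:ikm_chordal_infty} the same translation is valid in the limiting case $m=\infty$. Applying this with $\varepsilon = 2\delta+1$, $k = 4\delta+3$ and $m=\infty$ yields that $G$ is $(2(2\delta+1),\,4\delta+3,\,\infty)$-chordal, that is, $G$ is $[4\delta+2,4\delta+3]$-chordal, which is exactly the claim.

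I do not expect a genuine obstacle: the corollary is a purely formal consequence of Theorem \ref{t:hyperbolicity_2} together with the group-theoretic dictionary of Theorem \ref{t:ikm_chordal} and Remark \ref{r:ikm_chordal}. The only thing that needs a little care is the bookkeeping of constants --- in particular the factor $2$ relating the density parameter $\varepsilon$ on a cycle to the bound $i_0 = 2\varepsilon$ on the index of the shortcut vertex in Definition \ref{d:ikm_chordal} --- and, if integer parameters are insisted upon, the rounding of $\delta$ at the start (and the corresponding appeal to Remark \ref{r:natural_ekm_chordal} to keep $\varepsilon$ and $k$ in $\NN$).
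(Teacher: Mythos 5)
Your proposal is correct and follows exactly the route the paper intends: the corollary is stated as an immediate consequence of Theorem \ref{t:hyperbolicity_2} (giving $\varepsilon=2\delta+1$, $k=4\delta+3$ for the Cayley graph) combined with Remark \ref{r:ikm_chordal} and Remark \ref{r:ikm_chordal_infty} (giving $i_0=2\varepsilon=4\delta+2$ and handling $m=\infty$). Your bookkeeping of the constants matches the paper's.
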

	
	Many groups may satisfy	being $k$-chordal. Let us see some examples.
	
	
	\begin{proposition}\label{p:Z_chordal} $\ZZ ^2 $ is 5-chordal.
	\end{proposition}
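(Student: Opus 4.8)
The plan is to work with the standard generating set $S=\{(1,0),(0,1)\}$, so that $Cay(\ZZ^2,S)$ is the square grid $\Gamma$; by the correspondence between simple relations and cycles (Remark \ref{r:reduced_relation}), it suffices to prove that $\Gamma$ is $5$-chordal as a graph. The first observation is that a cycle $\gamma$ in $\Gamma$ has \emph{no} shortcut if and only if it is isometrically embedded, i.e. $d_\Gamma(p,q)=d_\gamma(p,q)$ for all $p,q\in V(\gamma)$: the inequality $d_\Gamma(p,q)\le d_\gamma(p,q)$ always holds, and whenever it is strict a $\Gamma$-geodesic from $p$ to $q$ is a shortcut. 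Since $\Gamma$ is bipartite (colour $(x,y)$ by the parity of $x+y$) it has no odd cycles and no cycle of length $<4$, so proving $5$-chordality reduces to showing there is no isometrically embedded cycle of even length $n\ge 6$.

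Assume for contradiction that $\gamma$ is such a cycle, and let $x_{\max}$ be the largest first coordinate attained on $\gamma$. First I would check that $\gamma$ meets the line $x=x_{\max}$ in a nondegenerate vertical segment: an isolated vertex $(x_{\max},y_0)$ is impossible, since both of its $\gamma$-neighbours would be forced to equal the single grid vertex $(x_{\max}-1,y_0)$. Fix a maximal such segment, running from $(x_{\max},y_1)$ down to $(x_{\max},y_1-\delta)$ with $\delta\ge 1$. Because $\gamma$ cannot leave this column to the right and the segment is maximal, $\gamma$ enters and leaves it exactly at $a=(x_{\max}-1,y_1)$ and $b=(x_{\max}-1,y_1-\delta)$; the arc of $\gamma$ running $a,(x_{\max},y_1),\dots,(x_{\max},y_1-\delta),b$ has length $\delta+2$, whereas $d_\Gamma(a,b)=\delta$. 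Isometry of $\gamma$ then forces the complementary arc to have length exactly $\delta$, hence $n=2\delta+2$, and to be a $\Gamma$-geodesic from $a$ to $b$. Since any grid geodesic between two points with the same first coordinate must be the straight vertical segment joining them (a horizontal step would have to be compensated, wasting two of the $\delta$ steps), this complementary arc lies entirely on the line $x=x_{\max}-1$; therefore $\gamma$ is exactly the boundary of the $1\times\delta$ rectangle $[x_{\max}-1,x_{\max}]\times[y_1-\delta,y_1]$, with $\delta=(n-2)/2\ge 2$.

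It remains to contradict isometry by exhibiting a shortcut in this rectangle. Take $p=(x_{\max},y_1-1)$ and $q=(x_{\max}-1,y_1-1)$; since $\delta\ge 2$ these are not corners, so both lie on $\gamma$, and $d_\Gamma(p,q)=1$ while $d_\gamma(p,q)=\min(3,\,2\delta-1)=3$. Hence the edge $pq$ is a shortcut, contradicting the assumption. Therefore no isometrically embedded cycle of length $\ge 6$ exists and $\ZZ^2$ is $5$-chordal. (The same analysis shows $5$ is sharp: for $\delta=1$ the rectangle is a unit square, which is isometrically embedded, so $\ZZ^2$ is not $4$-chordal.)

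I expect the main obstacle to be the middle paragraph: carefully justifying that the rightmost vertical segment is nondegenerate, that it is entered and exited precisely at the two vertices on the adjacent column, and — most importantly — that isometry simultaneously pins down $n=2\delta+2$ and forces the return arc to be a straight segment. All of this rests on just two elementary facts about the grid (that $\gamma$ cannot pass to the right of $x_{\max}$, and that geodesics between points of equal first coordinate are unique straight segments); once these are in place, the explicit shortcut and the bipartiteness bookkeeping are routine.
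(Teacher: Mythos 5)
Your proof is correct. It ultimately exhibits the same two shortcuts as the paper --- the vertical path realizing $ab^{q}a^{-1}=b^{q}$ across a horizontal turn, and the corner-cutting edge realizing $b^{\varepsilon}a^{-1}b^{-\varepsilon}=a^{-1}$ in the thin rectangle --- but it is organized quite differently. The paper works on the word of the simple relation: it locates the first block of $a$'s, the following block of $b^{\varepsilon}$'s and the next $a^{\pm1}$, normalizes (``relabeling if necessary'') so that the turn reads $ab^{q}a^{-1}$, takes $b^{q}$ as the candidate shortcut, and asserts that if this candidate fails the cycle must be the commutator rectangle $ab^{q}a^{-1}b^{-q}$, where the length-one corner cut finishes the job. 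You instead reduce ``no shortcut'' to ``isometrically embedded'' and run an extremal argument on the rightmost column of the grid; uniqueness of grid geodesics between vertically aligned vertices then forces any shortcut-free cycle to be the boundary of a $1\times\delta$ rectangle, which for $\delta\geqslant 2$ is not isometrically embedded. What your route buys is rigor at exactly the two places the paper is thin: using the global maximum $x_{\max}$ replaces the unexplained relabeling step (needed because the first direction change could be $ab^{q}a$ rather than $ab^{q}a^{-1}$), and the geodesic-uniqueness argument supplies the missing justification for the claim that failure of the first shortcut pins down the entire cycle. The bipartiteness bookkeeping (disposing of length~$5$ outright) and the sharpness remark via the unit square are correct extras not present in the paper.
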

	
	\begin{proof} Let $\gamma$ be a cycle with length $L(\gamma) \geqslant 5$ and consider a simple relation $s_1 \cdots s_n = e$ associated with $\gamma$ in the sense of Remark \ref{r:reduced_relation}. Suppose without loss of generality that $s_1 = a$, then there is $1<i<n$ such that $s_1 = \ldots = s_{i-1} = a$ and $s_i = b^\varepsilon $, where $\varepsilon \in\{-1,1\}$. Now, consider $ i<j<n $ such that $ s_i = \ldots = s_{j-1} = b^\varepsilon $ and $s_j = a $ or $s_j = a^{-1} $. Since $\gamma$ is a cycle, we can assume, by relabelling if necessary, that $s_j = a^{-1}$. If we denote $q = \varepsilon (j-i) $, $ r = |q|$ and $s_1' = \ldots = s_r ' = b^\varepsilon$, then we get the following equality, see Figure \ref{Fig:shortcut}. $$ s_{i-1} s_i \cdots s_{j-1} s_j = a b^q a^{-1} = b^q = s_1 ' \cdots s_r ' $$ \begin{figure}[h]
			\centering
			\includegraphics[scale=1]{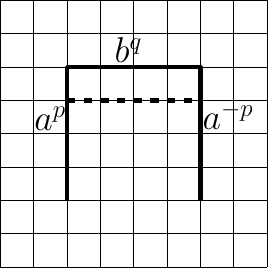}
			\caption{Shortcut $  a b^q a^{-1} = b^q  $.}
			\label{Fig:shortcut}
		\end{figure}
		
		Obviously $r < j-(i-1)$. If $s'_1\cdots s'_r$ defines a shortcut, we are done. Otherwise, if $r > n - j + (i-1) - 2$, then $\gamma$ is defined by $ s_1 = a,\, s_2 = \ldots = s_{r+1} = b^\varepsilon ,\, s_{r+2} = a^{-1},\, s_{r+3} = \ldots = s_n = b^{-\varepsilon} $, therefore $ s_1 \cdots s_n = ab^q a^{-1} b^{-q} = e$. If we denote $s_1 ' = a^{-1}$, then we get the following equality, see Figure \ref{Fig:shortcut_2}. $$ s_{r+1} s_{r+2}  s_{r+3} = b^\varepsilon a^{-1} b^{-\varepsilon} = a^{-1} = s_1 ' $$ \begin{figure}[h]
			\centering
			\includegraphics[scale=1]{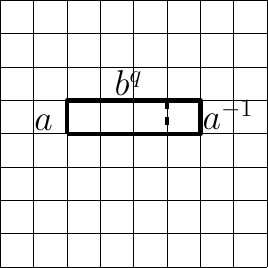}
			\caption{Shortcut $ b^\varepsilon a^{-1} b^{-\varepsilon} = a^{-1} $.}
			\label{Fig:shortcut_2}
		\end{figure}
		
		Since $L(\gamma) \geqslant 5$, then $|q|>1$ and therefore $ s_1 ' $ is a shortcut.  
	\end{proof}
	
	In the general case, since n-hypercubes are Hamiltonian graphs, we conjecture that $\ZZ ^n $ is $(2^n + 1)$-chordal, where $2^n$ is the number of vertices of an $n$-hypercube.
	
	\section{Baumslag-Solitar groups $BS(1,n)$}\label{BS}
	
	\begin{definition}For integers $m,n$, the Baumslag-Solitar group is defined by following presentation: $$ BS(m,n) = \left\langle a,b \,|\, ba^m b^{-1} = a^n \right\rangle  $$
	\end{definition}
	
	We will focus on $G_n := BS(1,n)$, $ n > 0 $. Let $ \Gamma_n$ denote the corresponding Cayley 2-complex built from rectangles homeomorphic to $ [0,1]\times [0,1] $, with both vertical sides labelled by $b$ upward, the top horizontal side labelled by an $a$ to the right, and the bottom horizontal side split into $n$ edges each labelled by $a$ to the right. See Figure \ref{Fig:leaf_bs}.
	
	\begin{figure}[h]
		\centering
		\includegraphics[scale=0.75]{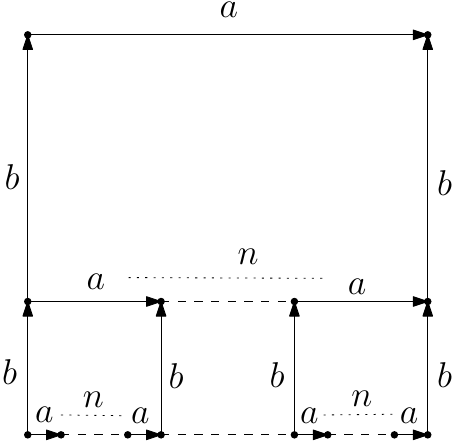}
		\caption{One part of the standard Cayley graph of $G_n$}
		\label{Fig:leaf_bs}
	\end{figure}
	
	Let $T_n$ be a Cantor tree where each vertex, besides having $n$ sons, has a father. Let us denote the horizontal projection as $\pi _n :\, \Gamma_n \longrightarrow T_n $, see Figure \ref{Fig:Baumslag-Solitar}. More details can be found in \cite{Eps}.
	
	\begin{figure}[h]
		\centering
		\includegraphics[scale=0.5]{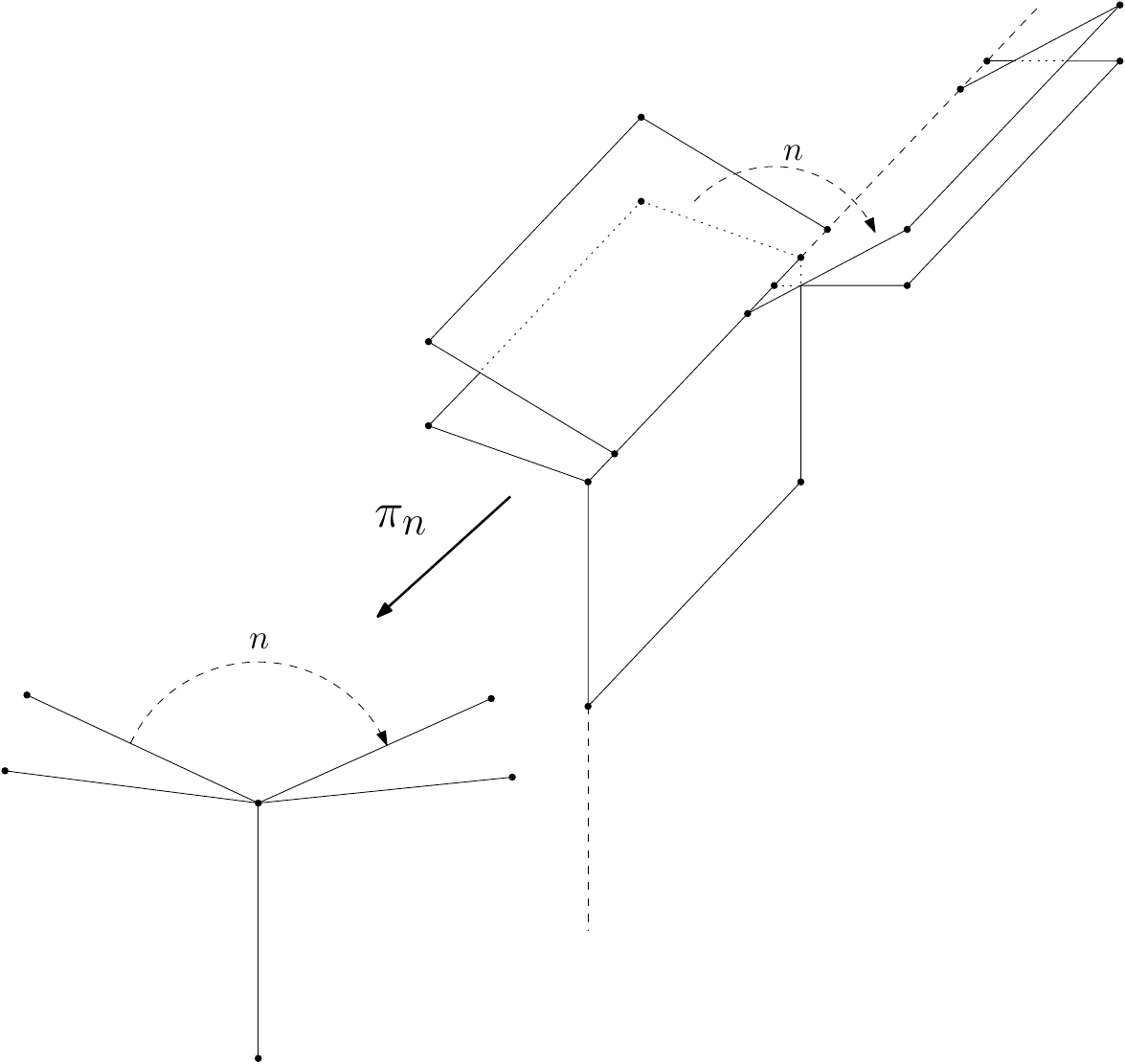}
		\caption{The Cayley graph of $G_n$ projected onto the Cantor tree $T_n$}
		\label{Fig:Baumslag-Solitar}
	\end{figure}

	\begin{proposition}\label{p:BS_eq}Given $s,t\in\NN$ and $\varepsilon,k,m > 0$ the following properties are equivalent.
		\begin{enumerate}
			\item $BS(s,t)$ is $\varepsilon$-densely $(k,m)$-chordal.
			\item $BS(-s,t)$ is $\varepsilon$-densely $(k,m)$-chordal.
			\item $BS(s,-t)$ is $\varepsilon$-densely $(k,m)$-chordal.
			\item $BS(-s,-t)$ is $\varepsilon$-densely $(k,m)$-chordal.
			\item $BS(t,s)$ is $\varepsilon$-densely $(k,m)$-chordal.
			\item $BS(-t,s)$ is $\varepsilon$-densely $(k,m)$-chordal.
			\item $BS(t,-s)$ is $\varepsilon$-densely $(k,m)$-chordal.
			\item $BS(-t,-s)$ is $\varepsilon$-densely $(k,m)$-chordal.
		\end{enumerate}
	\end{proposition}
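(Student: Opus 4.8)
The plan is to reduce the eight assertions to a single essential equivalence and then to settle that one by exhibiting an isomorphism of Cayley graphs. I would first record the elementary principle that if groups $H$ and $H'$, generated by $\{a,b\}$ and $\{a',b'\}$ respectively, admit an isomorphism $\varphi\colon H\to H'$ carrying $\{a^{\pm1},b^{\pm1}\}$ onto $\{(a')^{\pm1},(b')^{\pm1}\}$, then $\varphi$ itself is a graph isomorphism $Cay(H,\{a,b\})\to Cay(H',\{a',b'\})$; since being $\varepsilon$-densely $(k,m)$-chordal is a property of the underlying graph, $H$ has it (for given $\varepsilon,k,m$) if and only if $H'$ does. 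Applying the free-group automorphisms $a\mapsto a^{-1}$ and $b\mapsto b^{-1}$ to the relator $ba^sb^{-1}a^{-t}$ of $BS(s,t)$ produces presentations of $BS(-s,-t)$ and of $BS(t,s)$, and composing the two gives $BS(-t,-s)$; hence (1), (4), (5), (8) are equivalent. Likewise, the relation $ba^{-s}b^{-1}=a^t$ is, after taking inverses, the relation $ba^sb^{-1}=a^{-t}$, so $BS(-s,t)$ and $BS(s,-t)$ are literally the same presentation, and $b\mapsto b^{-1}$ turns that presentation into one of $BS(t,-s)$ $(=BS(-t,s))$; hence (2), (3), (6), (7) are equivalent.

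It remains to prove $(1)\Leftrightarrow(2)$: that $BS(s,t)$ is $\varepsilon$-densely $(k,m)$-chordal if and only if $BS(-s,t)$ is. Here no group isomorphism is available in general --- for instance $BS(1,1)=\ZZ^2$ while $BS(1,-1)$ is the Klein bottle group, and these are not isomorphic --- so I would instead show directly that the two Cayley graphs are isomorphic as graphs. Both $G=BS(s,t)$ and $G'=BS(-s,t)$ are HNN extensions of $\langle a\rangle\cong\ZZ$, with stable letter $b$ and with the \emph{same} pair of associated subgroups $\langle a^s\rangle$ and $\langle a^t\rangle$; consequently the Bass--Serre tree $T$ (vertices the cosets of $\langle a\rangle$, each vertex of valence $s+t$) is the same for both. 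Relative to $T$, the Cayley graph of either group is a \emph{tree of lines}: over each vertex $v$ sits a copy $L_v$ of $Cay(\ZZ,\{a\})$ carrying the $a$-edges, and the $b$-edges are organized into slabs, one over each edge of $T$, each slab gluing an arithmetic progression of one incident line bijectively to an arithmetic progression of the other (the two moduli being $s$ and $t$). A short computation with the relation $b^{-1}a^tb=a^{\pm s}$ shows that the only difference between $Cay(G,\{a,b\})$ and $Cay(G',\{a,b\})$ is that each of these gluing bijections is orientation-preserving for $BS(s,t)$ and orientation-reversing for $BS(-s,t)$; the tree $T$, the lines $L_v$, and the two moduli on every slab are exactly the same.

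To produce the isomorphism I would keep the identity on $T$ and put over each vertex $v$ either the identity or the reflection $j\mapsto -j$ of the line $L_v$, using reflections over exactly the vertices in one part of a bipartition $V(T)=V_0\sqcup V_1$ --- which exists because every tree is bipartite. Reflecting a line reverses the orientation of the gluing on every slab incident to it, so reflecting over exactly one endpoint of each edge converts every orientation-preserving slab into an orientation-reversing one and conversely; the basepoint (translation) of each line can then be fixed by rooting $T$ and propagating outward, with no obstruction since $T$ has no cycles. This yields a graph isomorphism $Cay(G,\{a,b\})\to Cay(G',\{a,b\})$, hence $(1)\Leftrightarrow(2)$, and so all eight statements are equivalent.

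The step I expect to be delicate is the bookkeeping in the third paragraph: verifying precisely that the sole discrepancy between the two Cayley graphs is the orientation of the slab gluings, and that a single reflection of one line simultaneously reverses that orientation on every slab meeting it; once this is in place the bipartiteness of $T$ does the rest. (In the special case $s=1$, which is all that Section \ref{BS} needs, one can bypass the tree-of-lines discussion: via the faithful affine representation $a\colon x\mapsto x+1$, $b\colon x\mapsto\pm n\,x$ one sees that $Cay(BS(1,\pm n),\{a,b\})$ is in either case literally the graph on $\ZZ\times\ZZ[1/n]$ whose $b$-edges join $(k,c)$ to $(k\pm1,c)$ and whose $a$-edges join $(k,c)$ to $(k,c\pm n^k)$.)
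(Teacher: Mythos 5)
Your proposal is correct, and it takes a genuinely different (and more careful) route than the paper. The paper's proof is a two-line assertion: the Cayley $2$-complexes of $BS(\pm s,\pm t)$ coincide except for the orientation of some edge labels, and the complex of $BS(t,s)$ is a $180^\circ$ rotation of that of $BS(s,t)$; from this it concludes that the Cayley graphs agree and the chordality constants transfer. (The paper phrases the conclusion as the graphs being ``quasi-isometric,'' which on its own would not preserve the fixed constants $\varepsilon,k,m$ in the statement; what is really being used is that the underlying unlabeled graphs are isomorphic, which is exactly what you prove.) You instead split the problem: the changes $(s,t)\mapsto(-s,-t)$, $(s,t)\mapsto(t,s)$, and the identification $BS(-s,t)=BS(s,-t)$ are realized by honest group isomorphisms sending generators to generators$^{\pm1}$, hence by labeled graph isomorphisms, reducing everything to the single equivalence $(1)\Leftrightarrow(2)$; that one, where no group isomorphism exists (your $\ZZ^2$ versus Klein-bottle example is apt), you settle by exhibiting a graph isomorphism via the tree-of-lines decomposition, reflecting the lines over one class of a bipartition of the Bass--Serre tree and fixing translations by propagation from a root. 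This is precisely the content hidden in the paper's phrase ``the same except for the orientation of some labels,'' and your argument supplies the justification the paper omits; the bookkeeping you flag as delicate is real but sound, since the associated subgroups $\langle a^s\rangle,\langle a^t\rangle$ (hence the tree and the slab moduli) are unchanged by the sign flips and the relation $ba^{\mp s}=a^{t}b^{\mp1}\cdots$ shows the sign only reverses the orientation of each slab gluing. Your closing remark that for $s=1$ the affine model on $\ZZ[1/n]\times\ZZ$ makes the two Cayley graphs literally identical is a nice shortcut covering everything Section~\ref{BS} actually uses.
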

	
	\begin{proof}The corresponding Cayley 2-complex of the groups $BS(s,t)$, $BS(-s,t)$, $BS(s,-t)$ and $BS(-s,-t)$ are the same except for the orientation of some labels on the edges. Therefore, these groups have Cayley graphs quasi-isometric, providing the proof of $ (1) \Leftrightarrow (2) \Leftrightarrow (3) \Leftrightarrow (4) $. 
		
		Since the corresponding Cayley 2-complex of $BS(t,s)$ is a $180^\circ $ rotation of the corresponding Cayley 2-complex of $BS(s,t)$, these groups have Cayley graphs quasi-isometric, providing the proof of $(1) \Leftrightarrow (5)$.
		
		The rest of equivalences are deduced by exchanging the role of the variables $s,t$.
	\end{proof}
	
	\smallskip
	
	Let us denote in $G_n$ as \textit{$a$-edges} the edges labelled by $a$ and \textit{$b$-edges} labelled by $b$.

	\begin{definition}Given $\omega = a^{\alpha _1} b^{\beta _1} \cdots a^{\alpha _k} b^{\beta _k} \in G_n$ we say that $h_n (\omega) = \sum _i \beta _i $ is the \textit{height} of $\omega$. Given $p$ any vertex in $T_n$ we denote $\widetilde{h}_n (p) = h_n (\omega)$ for some $\omega \in G_n$ such that $ \pi_n (\omega) = p $. Also, given a $b$-edge $e=\{v,w\} \in G_n$, let $h_n (e):=\max\{h_n(v),h_n(w)\} $.
	\end{definition}
	
	\begin{remark}If $\omega_1 , \omega_2 \in G_n$ are two points such that $\pi_n (\omega_1) = \pi_n (\omega_2) $, then $h_n (\omega_1) = h_n (\omega_2)  $, i.e., $\widetilde{h}_n (p)$ is well defined.
	\end{remark}


	
	Given a vertex $q\in T_n$, let us define: $$ L_q := \{ x \in \Gamma _n \,|\, d_{T_n}(\pi_n(x) , q) =  h_n(x) - \widetilde{h}_n (q) \}
	. $$ 
	Given $\omega \in L_q$, there is $\tilde{\omega} \in \pi_n ^{-1} (q) $ such that $ \tilde{\omega} b^{h(\omega) - h(\tilde{\omega})} = \omega $. We will consider $ p_q :\, L_q \longrightarrow \pi_n ^{-1} (q) $ the vertical proyection $ p_q (\omega) = \tilde{\omega} $ and we will consider an order in $\pi_n ^{-1} (q)$ where $x<y$ if $y=xa^\lambda$ for some $\lambda >0$. See Figure \ref{Fig:proyeccion_bs_2}.
	\begin{figure}[h]
		\centering
		\includegraphics[scale=0.75]{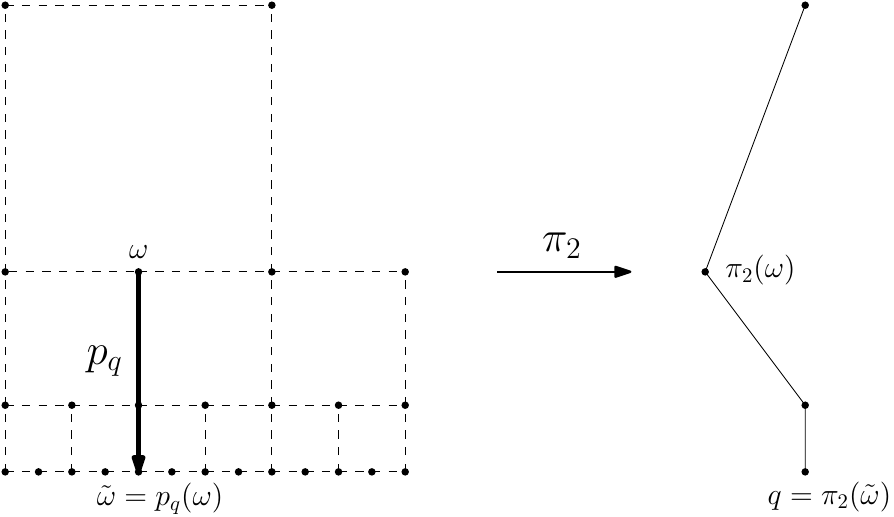}
		\caption{The projections $p_q$ and $\pi _n$.}
		\label{Fig:proyeccion_bs_2}
	\end{figure}
	
	\begin{definition}Given $x,y \in \Gamma _n$ we denote the geodesic path in $T_n$ from $ \pi _n (x) $ to $ \pi _n (y) $ as $ \widetilde{\sigma}_n (x,y) $. We say that the point $ q \in \widetilde{\sigma}_n (x,y) $ such that $ \widetilde{h}_n (q) = \min \widetilde{h}_n  (\widetilde{\sigma}_n (x,y))  $ is the branching point between $ x $ and $ y $. 
	\end{definition}
	
	\begin{remark}\label{r:branch}Let $x,y\in\Gamma _n$ and $\gamma$ be a path between $x$ and $y$. If $ q \in T_n $ is the branching point between $ x $ and $ y $, then $\gamma \cap \pi _n ^{-1} (q) \neq \emptyset$.
	\end{remark}
	
	\begin{definition}Let $ \gamma $ be a path in $\Gamma_n$ between two vertices $x,y \in V(\Gamma_n)$. Suppose that $ \gamma $ is defined by the vertices $x_0 = x$, $ x_i = x s_1 \cdots s_i $ for $i=1,\ldots , m$. We define the \textit{horizontal length} of $\gamma$ as follows: $$ H(\gamma) = \left| \{ i\in\{1,\ldots,m\} \,:\, \exists\varepsilon\in\{ -1,1 \},\, s_i = a^\varepsilon \} \right|.$$ 
	\end{definition}

	
	Notice that given two vertices in $G_n$, the horizontal length is not an invariant on the geodesics joining them as the following example shows. 
	
	\begin{example}In $G_3$, a geodesic defined by $a^3$ and a geodesic defined by $bab^{-1}$ conect the same pair of vertices but they do not have the same horizontal length. See Figure \ref{Fig:direct}.
	\end{example}
	
	\begin{figure}[h]
		\centering
		\includegraphics[scale=0.5]{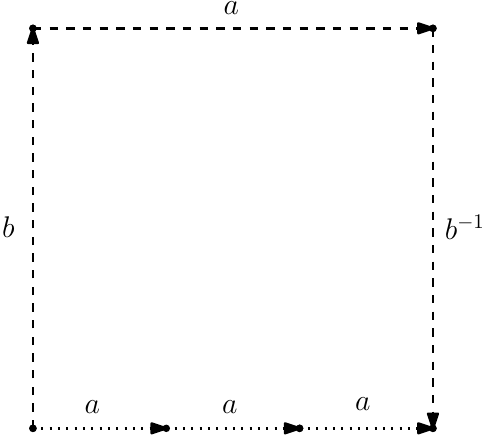}
		\caption{The geodesic given by $a^3$ has horizontal length 3 and the geodesic given by $bab^{-1}$ has horizontal length 1.}
		\label{Fig:direct}
	\end{figure}

	\begin{lemma}\label{l:heigth} If $\sigma$ is a geodesic in $G_2$, then $\sigma$ does not contain three $b$-edges $e_1,e_2,e_3$ with $h_n(e_1)=h_n(e_2)=h_n(e_3)$.
	\end{lemma}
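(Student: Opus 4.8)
The plan is to argue by contradiction, exploiting the structure of geodesics in $G_2$ as paths in the Cayley complex that sits inside $\RR\times T_2$. Suppose $\sigma$ is a geodesic containing three $b$-edges $e_1,e_2,e_3$ all at the same height, say $h_2(e_i)=h$ for $i=1,2,3$. Orient $\sigma$ and label these edges in the order in which $\sigma$ traverses them. Each $b$-edge is either traversed \emph{upward} (from height $h-1$ to $h$) or \emph{downward} (from $h$ to $h-1$); in either case both endpoints have heights in $\{h-1,h\}$. The first step is the pigeonhole observation: among $e_1,e_2,e_3$, at least two are traversed in the same vertical direction, say $e_i$ and $e_j$ with $i<j$ are both traversed upward (the downward case is symmetric, or follows by reversing $\sigma$).

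The second step is to analyze the subpath $\tau$ of $\sigma$ between the end of $e_i$ and the start of $e_j$. Both endpoints of $\tau$ lie at height $h$. I would track the height function $h_2$ along $\sigma$: just after $e_i$ the height is $h$, and just before $e_j$ the height is again $h-1$ (since $e_j$ is traversed upward from height $h-1$). Along $\tau$, the height changes by $\pm 1$ at each $b$-edge and stays constant along $a$-edges. The key structural input is that the portion of $\sigma$ strictly between the upper endpoint of $e_i$ and the lower endpoint of $e_j$ must at some point come back down from height $h$ to height $h-1$; combined with the fact that in $\Gamma_2$ each ``leaf'' at a given height sits below exactly one leaf on the level above (the bottom side of a rectangle is split into $n=2$ $a$-edges, so going down and up again forces passage through a common vertical fiber), one sees that $\tau$ must revisit — up to a bounded horizontal shift coming from the single rectangle width — a fiber $\pi_2^{-1}(q)$ it has already met, or else $e_i$ and $e_j$ are separated by a genuine detour. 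In the $n=2$ case the geometry is tight enough that this detour can be shortcut: the subpath from the lower endpoint of $e_i$ down below and back up to the lower endpoint of $e_j$ can be replaced by a strictly shorter path staying at height $\le h-1$ using the relation $ba^mb^{-1}=a^n$ with $m=1$, $n=2$. This contradicts $\sigma$ being a geodesic.

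The third step is to make the ``shortcut'' precise. I would use the projection $\pi_2$ and the height function $\widetilde h_2$: the branching point $q$ between the two endpoints of $\tau$ satisfies $\widetilde h_2(q)\le h-1$, and by Remark \ref{r:branch} the path $\tau$ meets $\pi_2^{-1}(q)$. Writing the two relevant points of $\tau\cap\pi_2^{-1}(q)$ and comparing the length of $\tau$ to the $a$-distance between them within the fiber $\pi_2^{-1}(q)$ (which is cheap, since $a$-edges cost $1$), one gets that $\tau$ is not efficient unless it descends and re-ascends in a very restricted pattern — and for $n=2$ the doubling exponent is small enough that the two upward $b$-edges $e_i,e_j$ at the same height, with anything between them, always admit the replacement $b\,a^{\pm1}\,b^{-1}=a^{\pm2}$ (or its inverse) performed at height $h$, reducing $b$-edge usage without increasing total length. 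Tracing this carefully yields a strictly shorter path with the same endpoints as $\sigma$.

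The main obstacle I anticipate is the bookkeeping in the second and third steps: one must handle the various cases for the directions of the three $b$-edges and the possible interleaving of $a$-edges, and must be careful that the ``shortcut'' really is \emph{strictly} shorter rather than merely no longer — this is exactly where $n=2$ (as opposed to $n\ge 3$) is used, since the arithmetic $a^k b a^{\pm 1} b^{-1}=a^{k\pm 2}$ keeps horizontal costs under control. I expect the cleanest route is to isolate a single ``valley'' of $\sigma$ between two same-height, same-direction $b$-edges, show it projects to a path in $T_2$ with a unique lowest point $q$, and then explicitly exhibit the replacement path; the identities in Proposition \ref{p:BS_eq} let me assume without loss of generality the most convenient sign conventions for the generators.
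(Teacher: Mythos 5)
Your plan (argue by contradiction, pigeonhole on the directions of the three $b$-edges, then build a shortcut from the defining relation) is pointed in the right direction, but there is a genuine gap: the shortcut is never actually exhibited, and the sub-segments you propose to replace are the wrong ones. Note first a sign of the underlying confusion: you assert that both endpoints of $\tau$ lie at height $h$ and in the next clause that one of them lies at height $h-1$. More seriously, if $e_i$ and $e_j$ are both traversed upward at height $h$, then neither the subpath from the top of $e_i$ to the bottom of $e_j$ nor the subpath between the two bottom endpoints need admit any shortcut. Take $\sigma = b\,a^2\,b^{-1}\,a\,b$ in $G_2$, with vertices $e,\,b,\,ba,\,ba^2,\,a^4,\,a^5,\,a^5b$: all three $b$-edges have height $1$ and the first and third point upward, yet the subword $a^2b^{-1}a$ (from $b$ to $a^5$) and the subword $ba^2b^{-1}a$ (from $e$ to $a^5$) are both geodesics in $G_2$ (one checks $d(b,a^5)=4$ and $d(e,a^5)=5$). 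So no \emph{strictly} shorter replacement with those endpoints exists, in particular none ``staying at height $\le h-1$'', and your argument yields no contradiction. The shortcut has to run from the \emph{bottom} vertex of the first edge to the \emph{top} vertex of the third, so that all three $b$-letters are consumed; here the whole segment $b\,a^2\,b^{-1}\,a\,b$ (length $6$) is replaced by $a\,b\,a^2$ (length $4$).

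That is exactly what the paper does. Between two $b$-edges of height $h$ that are consecutive among the $b$-edges of that height, the directions must alternate (returning from height $h$ to $h-1$ without another height-$h$ $b$-edge is impossible), so three such edges $e_1=\{v_i,v_{i+1}\}$, $e_2=\{v_j,v_{j+1}\}$, $e_3=\{v_k,v_{k+1}\}$ may be taken consecutive and of the form up--down--up (or down--up--down). The segment from $v_i$ to $v_{k+1}$ then reads $b\,u\,b^{-1}\,w\,b$, where $u=s_{i+2}\cdots s_j$ lives at heights $\ge h$ and $w=s_{j+2}\cdots s_k$ at heights $\le h-1$; the paper replaces it by $w\,b\,u$, which is two letters shorter and represents the same element, because in $BS(1,n)$ a word is determined by its net $b$-exponent together with the heights and signs of its $a$-letters (each $a^{\pm1}$ read at height $d$ contributes $\pm n^{d}$ to the $\ZZ[1/n]$-coordinate), and the swap preserves all of these data. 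Note also that this argument nowhere uses $n=2$: your expectation that the arithmetic $a^kba^{\pm1}b^{-1}=a^{k\pm2}$ is the crux of this lemma is misplaced, as the restriction to small $n$ matters elsewhere (Lemma \ref{l:geodesic} and Theorem \ref{t:BS_no_chordal}), not here. Likewise the tree $T_2$, branching points and fibers $\pi_2^{-1}(q)$ that you invoke play no role in this lemma; the proof is a purely combinatorial rearrangement of the word.
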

	
	\begin{proof} Consider that the geodesic $\sigma$ is defined by the vertices $v_0,v_1,\dots ,v_n$ and let $s_i=v_{i-1}^{-1}v_i$ for every $1\leq i \leq n$. Suppose that there exist three $b$-edges $e_1=\{v_i,v_ {i+1} \},e_2=\{v_j,v_{j+1}\},e_3=\{v_k,v_{k+1} \}$ with $i<j<k$ and $h_n(e_i)=h_n(e_j)$ for every $1\leq i,j\leq 3$. Let us assume that these edges are consecutive with this property. Therefore, if  $v_{i+1}=v_ib$, then $v_{j+1}=v_jb^{-1}$ and $v_{k+1}=v_kb$, and if $v_{i+1}=v_ib^{-1}$, then $v_{j+1}=v_jb$ and $v_{k+1}=v_kb^{-1}$.
		
		\textbf{Case 1.} Suppose $v_{i+1}=v_ib$.  Then, it is immediate to check that 
		$$v_is_{j+2} \cdots s_k b s_{i+2}\cdots s_j=v_{k+1}$$ 
		(notice that it has the same $b$-edges and $a$-edges with the same orientations at the same heigths) and this defines a shortcut (two edges shorter) in $\sigma$ between $v_i$ and $v_{k+1}$ leading to contradiction since $\sigma$ is geodesic. See Figure \ref{Fig:LH1}.
		
		\begin{figure}[h]
			\centering
			\includegraphics[scale=0.75]{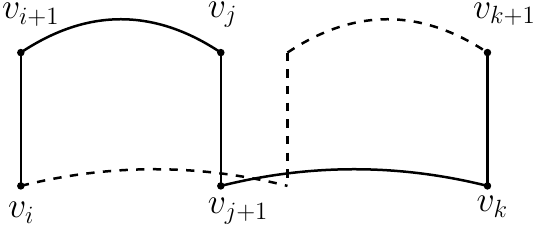}
			\caption{A shortcut in dashed line for Case 1.}
			\label{Fig:LH1}
		\end{figure}
		
		\textbf{Case 2.} Suppose $v_{i+1}=v_ib^{-1}$. Then, it is immediate to check that 
		$$v_is_{j+2} \cdots s_k b^{-1} s_{i+2}\cdots s_j=v_{k+1}$$ 
		and this defines a shortcut (two edges shorter) in $\sigma$ between $v_i$ and $v_{k+1}$ leading to contradiction. See Figure \ref{Fig:LH2}.
		
		\begin{figure}[h]
			\centering
			\includegraphics[scale=0.75]{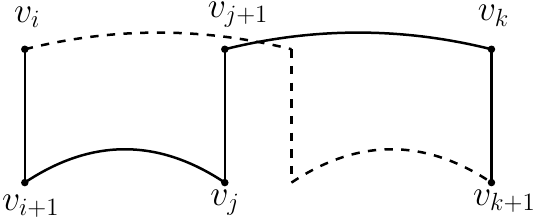}
			\caption{A shortcut in dashed line for Case 2.}
			\label{Fig:LH2}
		\end{figure}
	\end{proof}
	
	\begin{theorem}\label{t:BS_chordal}The group $G_2$ is 6-chordal with respect to $\{a,b\}$.
	\end{theorem}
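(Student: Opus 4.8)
I would argue by contradiction, starting from the standard reduction. If a cycle $\gamma$ with $L(\gamma)=n\geq 6$ had some sub-arc $\alpha$ with $L(\alpha)\leq\lfloor n/2\rfloor$ which is not geodesic in $\Gamma_2$, then any geodesic joining the endpoints of $\alpha$ would have length $<L(\alpha)=d_\gamma(\text{endpoints})$, so it would be a shortcut. Hence it suffices to treat a cycle $\gamma$ of length $n\geq 6$ that is \emph{isometrically embedded} in $\Gamma_2$ (equivalently: every sub-arc of length $\leq\lfloor n/2\rfloor\geq 3$ is a geodesic) and to reach a contradiction. From here on I work with the height $h_2$, with the tree projection $\pi_2\colon\Gamma_2\to T_2$ and Remark \ref{r:branch}, and with Lemma \ref{l:heigth}.

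Since $\gamma$ is a simple closed path it cannot consist of $a$-edges only (that would give a non-closing monotone $a$-ray), so its height profile is a non-constant closed lattice path; thus $\gamma$ has peaks (maximal $a$-runs at a local maximum height, flanked by a $b^{+}$- and a $b^{-}$-edge) and valleys (the analogue at local minima). As consecutive $a$-edges of a simple path carry the same exponent, a peak of width $w$ reads $b\,a^{w\varepsilon}b^{-1}$ and a valley of width $w$ reads $b^{-1}a^{w\varepsilon}b$. The relation $bab^{-1}=a^{2}$ then gives $b\,a^{\varepsilon}b^{-1}=a^{2\varepsilon}$ and $b^{-1}a^{2\varepsilon}=a^{\varepsilon}b^{-1}$, so the length-$3$ words $b\,a^{\varepsilon}b^{-1}$ and $b^{-1}a^{2\varepsilon}$ are non-geodesic; more generally $b\,a^{w}b^{-1}=a^{2w}$ with $|a^{2w}|\leq w+1<w+2$ for $w\geq 6$ (iterating $a^{6}=b\,a^{3}b^{-1}$ yields $|a^{m}|<m$ for $m\geq 6$), and a run of $\geq 6$ consecutive $a$-edges is non-geodesic for the same reason. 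Feeding these facts into the isometric-embedding hypothesis, together with the bound ``at most two $b$-edges of a given height on a geodesic'' of Lemma \ref{l:heigth} applied to the geodesic sub-arcs of $\gamma$, severely restricts the possible profiles: valleys have width exactly $1$, no peak has width $1$, and (for $n$ not too small) every maximal $a$-run has length at most $5$.

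It then remains to show that such a constrained $\gamma$ cannot close up unless $n\leq 5$. The plan is to do this by an extremal argument along the tree projection: pick a highest peak of $\gamma$, use Remark \ref{r:branch} and Lemma \ref{l:heigth} to control how $\gamma$ descends from it and reconnects through $\pi_2^{-1}$ of the branching vertices, and reduce to finitely many profile types of bounded length. Each residual type is then dispatched by hand: either it already contains a non-geodesic sub-word of one of the forbidden forms $b\,a^{\varepsilon}b^{-1}$, $b^{-1}a^{m}$ with $m\geq 2$, or $a^{m}$ with $m\geq 6$; or else, for the genuinely two-dimensional profiles, a shortcut is produced explicitly from the rearrangement $ba=a^{2}b$, exactly as in the proof of Lemma \ref{l:heigth}, where two $b$-edges at equal height are slid past an intervening horizontal stretch to shorten the path by two edges.

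The main obstacle is this last case analysis and the arithmetic it rests on: deciding which powers $a^{m}$ (and which words $b^{\pm}a^{m}b^{\mp}$, $b^{\pm}a^{m}$) are geodesic in $BS(1,2)$ is governed by base-$2$ expansions, and one must keep this bookkeeping straight while checking that no profile is overlooked. It is also precisely here that the hypothesis $n=2$ is essential: the collapses $b\,a^{w}b^{-1}=a^{2w}$ and $b^{-1}a^{2}=a\,b^{-1}$, hence the impossibility of long horizontal stretches and tall peaks in a shortcut-free cycle, have no analogue for $BS(1,n)$ with $|n|\geq 3$, consistently with those groups failing to be $k$-chordal for any $k$.
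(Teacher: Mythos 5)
Your reduction to isometrically embedded cycles is sound, and the local constraints you extract (the collapse $ba^{\varepsilon}b^{-1}=a^{2\varepsilon}$, which is the paper's Case 1; the bound on $a$-runs; the width of valleys; Lemma \ref{l:heigth} on geodesic sub-arcs) are all correct and genuinely useful. But the proof stops exactly where the theorem begins. The step ``such a constrained $\gamma$ cannot close up unless $n\leq 5$'' is announced as a plan --- ``reduce to finitely many profile types of bounded length'' and ``each residual type is then dispatched by hand'' --- and is never carried out. Worse, the plan itself is not justified: the constraints you list are all local, and nothing you have written shows that they force a shortcut-free cycle to have bounded length. That implication is precisely the content of the theorem, and it cannot be obtained by inspecting finitely many bounded profiles unless you first prove a length bound, which you do not. (Compare the cycles $\gamma_N$ of Theorem \ref{t:BS_no_chordal}: they satisfy analogous local constraints in $G_n$ for $n>2$ and are arbitrarily long; ruling out their $G_2$ analogues requires a quantitative, global argument, not a local profile classification.)

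The paper's proof supplies exactly the global mechanism that is missing from your sketch. After disposing of the pattern $b\,a\,b^{-1}$ (your peak-of-width-one case), it takes a vertex $x$ of maximal height flanked by two $a$-edges, projects to the Bass--Serre tree $T_2$ and vertically onto the fibre over the lowest branching point, considers the set $P$ of points of $\gamma$ lying over the same projected point as $x$ but in a different sheet, and chooses $y\in P$ at minimal $\gamma$-distance from $x$. The arc $\gamma_y$ from $x$ to $y$ is then either non-geodesic (shortcut found) or is normalized, using Lemma \ref{l:heigth} and the identities $b^{-1}a^{k}b=ab^{-1}a^{k-2}b$, into a path that descends monotonically after its lowest $a$-edge; a vertical comparison path through $z=xb^{-l}a$ then yields a strictly shorter connection, a contradiction. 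None of this structure --- the choice of $y$, the normalization of $\gamma_y$, the final length comparison --- appears in your proposal, and without some substitute for it the argument does not close. To repair the proposal you would need either to carry out the announced case analysis in full (and first prove that it is finite), or to adopt a global argument of the paper's type.
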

	
	\begin{proof} Let $\gamma$ be a cycle in $\Gamma _2$ with length $L(\gamma) \geqslant 6$. 
		
		\textbf{Case 1.} Suppose that there are four consecutive vertices $x_i$ with $1\leq i\leq 4$ in $\gamma$ such that 
		$x_2=x_1b$, $x_3=x_2a$ and $x_4= x_3b^{-1}$. Then, $v_1 = x_1a, \, v_2 = x_1a^2=x_4$ defines a shortcut between $x_1$ and $x_4$ and we are done. 
		
		\textbf{Case 2.} Suppose, otherwise, that there are not such four consecutive vertices. Consider $M =\max_{v\in V(\gamma)}\{h_2 (v)\} $, $ m = \min_{v\in V(\gamma)}\{h_2 (v)\} $, $ q_m \in \pi _2 (\gamma) $ such that $ \widetilde{h}_2 (q_m) = m $ and the vertical projection $ p := p_{q_m}:\, L_{q_m} \longrightarrow \pi^{-1}(q_m) $. Let us denote $V_q = V(\gamma) \cap \pi_2 ^{-1} (q) $ for some $q\in T_2$. Then, since there exists at least one $a$-edge $e=\{v,w\}$ with $h_2(v)=h_2(w)=M$ and we are not in case 1, there exist three consecutive vertices in the cycle $ x_0,x,x_1 \in h_2^{-1}(M) $ with $ p(x_0) < p(x) < p(x_1) $, i.e., $x_0 = xa^{-1}$, $x_1 = xa$. Since $\gamma$ is a cycle, then $$ P = \{ z \in\gamma \,|\, p(x) = p(z) \text{ and } \pi _2 (x) \neq \pi _2 (z)  \} \neq \emptyset, $$ 
		where $z$ need not be a vertex.
		
		Let $ y \in P $ be such that $ d_\gamma (x,y) = \min _{v\in P} d_\gamma (x,v) $ and we will consider that $\gamma_y $ is the shortest path in the cycle $\gamma$ that joins $x$ and $y$. Fix $q_0 \in T_2$ the  branching point between $ x $ and $ y $ and $m_0 = \widetilde{h}_2  (q_0)$. 
		
		\textbf{Case 2a.} Suppose that $ y \in V(\gamma)$. Then $x,y$ are in the same plane and $h_2(y)=m_0$. Thus, the vertical path from $x$ to $ xb^{ - (M-m_0) }=y$ defines a shortcut between $x$ and $y$.
		
		\textbf{Case 2b.} Suppose that $ y \notin V(\gamma)$. We can suppose that $y$ is in an horizontal edge with vertices $y_0,y_1$, see Figure \ref{Fig:case2}. Suppose without loss of generality that $x_1, y_1 \in \gamma_y $.
		
		\begin{figure}[h]
			\centering
			\includegraphics[scale=0.75]{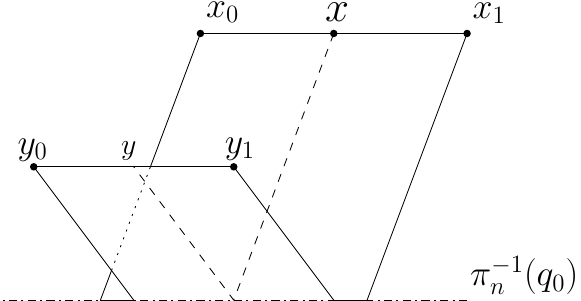}
			\caption{Case 2b.}
			\label{Fig:case2}
		\end{figure}
		
		If $\gamma_y$ is not a geodesic, since it is the shortest path in the cycle, then there is a shortcut between $x$ and $y$ and we are done. Suppose, otherwise that $\gamma_y$ is a geodesic. Then, by Lemma \ref{l:heigth}, there are not three $b$-edges in $\gamma_y$ with the same height.  
		
		Suppose $\gamma_y$ is defined by the vertices $x_1=v_0,v_1,\dots, v_n=y_1$ and let $s_i=v_{i-1}^{-1}v_i$ for every $1\leq i\leq n$.
		
		\textbf{Case 2b(i).} Suppose $h_2(y)=h_2(x)$. Then, since $x_1s_2\cdots s_n=y_1$ it is immediate to check that $xs_2\cdots s_n=y_0$ (it has the same $a$-edges and $b$-edges with the same orientations at the same heigths) and $d_\Gamma(x,y)\leq n-1+d_\Gamma(y_0,y)<n<L(\gamma_y)$ leading to contradiction since $\gamma_y$ is a geodesic.
		
		\textbf{Case 2b(ii).} Suppose $h_2(y)<h_2(x)$. Since there are not three $b$-edges in $\gamma_y$ with the same height, it is immediate to see that all vertices in $\gamma_y\cap \pi^{-1}(q_0)$ must be consecutive. Also, since $b^{-1}a^kb=ab^{-1}a^{k-2}b$ and $b^{-1}a^{-k}b=a^{-1}b^{-1}a^{-k+2}b$ for every $2\leqslant k\in \ZZ$ it is readily seen that $\gamma_y\cap \pi^{-1}(q_0)$ is a single edge, $\{v_i,v_{i+1}\}$ for some $0<i<n$ (otherwise, these relations allow a shortcut in $\gamma_y$ leading to contradiction).
		
		Since $h_2(x_1)=M=\max_{v\in V(\gamma)}\{h_2(v)\}$ and there are not three $b$-edges in $\gamma_y$ with the same height it follows that all $b$-edges $\{v_j,v_{j+1}\}$ with $ j <i$ satisfy that $v_{j+1}=v_jb^{-1}$.
		
		Also, let us see that all $b$-edges $\{v_j,v_{j+1}\}$ with $ j >i$ satisfy that $v_{j+1}=v_jb$. Suppose otherwise that $v_{j+1}=v_jb^{-1}$ for some $j>i$. If $h_2(v_j)\leq h_2(y_1)$ then necessarily there must be three $b$-edges with the same heigth leading to contradiction with Lemma \ref{l:heigth}. If $h_2(y_1)<h_2(v_j)\leq h_2(x_1)$, then there are at least three $b$-edges with heigth $h_2(y_1)+1$, one between $x_1=v_0$ and $v_i$, one between $v_i$ and $v_j$ and one between $v_{j}$ and $v_n=y_1$, leading to contradiction.
		
		Therefore, $L(\gamma_y)= H(\gamma_y)+h_2(x_1)+h_2(y_1)-2m_0$.
		
		Let us build a new geodesic $\sigma$ from $x$ to $y$ as follows. For each $a$-edge $\{v_j,v_{j+1}\}$ with $ j >i$ and $v_{j+1}=v_ja^{\varepsilon}$ with $\varepsilon\in \{-1,1\}$, consider any vertex $v_k$ with $k<i$ such that $h_2(v_k)=h_2(v_j)$. Then it is readily seen that the path $xs_1\cdots s_{k}a^{\varepsilon}s_{k+1}\cdots s_{j}s_{j+2}\cdots s_n$ is also a path from $x$ to $y_1$ with the same length as the restriction of $\gamma_y$ joining $x$ to $y_1$. Therefore, repeating this change for every $a$-edge betweeen the lowest $a$-edge and $y_1$, we build a path $\sigma$ from $x$ to $y$ such that $L(\sigma)=L(\gamma_y)$ and such that if $V(\sigma) \cap \pi_2 ^{-1} (q_m)=\{w_r,w_{r+1}\}$, $w_{s+1}=w_sb$ for every $s>r$, this is, $\sigma$ is a path that after its lowest $a$-edge is just a vertical geodesic. Since $\gamma_y$ is a geodesic, $\sigma$ is also a geodesic from $x$ to $y$ containing $x_1$ and $y_1$. 
		
		Let $l=h_2(x)-h_2(y)$. Then, let $z=xb^{-l}a$. By construction, notice that $p(y_1)<p(z)<p(x_1)$. If there is a vertex $w\neq x$ in $\sigma$ such that $p(w)=p(x)$ then the vertical geodesic from $x$ to $w$ is a shortcut in $\sigma$ leading to contradiction. Otherwise, there is a vertex $w$ in $\sigma$ such that $p(w)=p(z)$, see Figure \ref{Fig:claim1}.
		
		\begin{figure}[h]
			\centering
			\includegraphics[scale=0.5]{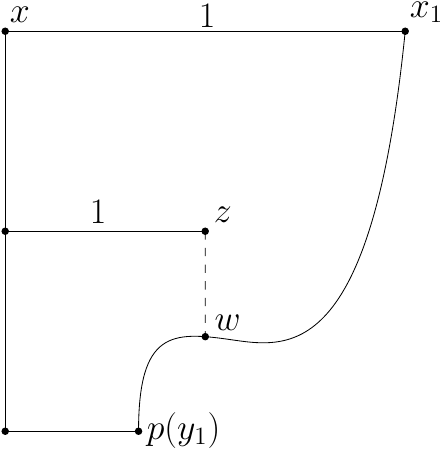}
			\caption{If there is no vertex $w\neq x$ in $\sigma$ with $p(w)=p(x)$, then there is a vertex $w$ in $\sigma$ with $p(w)=p(z)$.}
			\label{Fig:claim1}
		\end{figure}
		
		However, the path defined by $x, xb^{-1},\dots,xb^{-l},xb^{-l}a=z$ followed by the vertical geodesic from $z$ to $w$ 
		has length $h_2(x)-h_2(w)+1$ and is strictly shorter than the restriction of $\sigma$. Thus, this defines a shortcut 
		in $\sigma$ leading to contradiction.	
	\end{proof}

	By Proposition \ref{p:BS_eq} and Theorem \ref{t:BS_chordal}, we get the following result.
	
	\begin{corollary}The groups $BS(1,2)$, $BS(-1,2)$, $BS(1,-2)$, $BS(-1,-2)$, $BS(2,1)$, $BS(-2,1)$, $BS(-2,-1)$ and $BS(-2,-1)$ are 6-chordal with respect to $\{a,b\}$.
	\end{corollary}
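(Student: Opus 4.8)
The plan is short, because the statement is meant to be an immediate consequence of the two preceding results, and the real content has already been done. First I would record the bookkeeping: with $s=1$ and $t=2$, each of the eight groups appearing in the statement is literally one of
$BS(s,t),\,BS(-s,t),\,BS(s,-t),\,BS(-s,-t),\,BS(t,s),\,BS(-t,s),\,BS(t,-s),\,BS(-t,-s)$,
so Proposition \ref{p:BS_eq} applies and collapses the eight cases into the single case of $BS(1,2)=G_2$. Then I would simply invoke Theorem \ref{t:BS_chordal}, which says exactly that $G_2$ is $6$-chordal.

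The only point that I would spell out carefully is a matter of formulation rather than of substance. Proposition \ref{p:BS_eq} is phrased for the property ``$\varepsilon$-densely $(k,m)$-chordal'', whereas ``$6$-chordal'' is the case $k=6$, $m=\infty$, with no density clause, so one cannot quote the proposition as a black box. However, its proof does not use density at all: in the first four cases it observes that the Cayley $2$-complexes of $BS(s,t)$, $BS(-s,t)$, $BS(s,-t)$, $BS(-s,-t)$ coincide up to the orientation of some edge labels, and in the remaining cases that the Cayley $2$-complex of $BS(t,s)$ is a $180^\circ$ rotation of that of $BS(s,t)$; in particular the underlying Cayley graphs are not merely quasi-isometric but isomorphic as graphs.

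I would therefore conclude by noting that being $k$-chordal is defined purely in terms of the lengths of cycles and the lengths of shortcuts measured in the graph metric, hence is invariant under graph isomorphism; so the isomorphisms produced in the proof of Proposition \ref{p:BS_eq} transport the $6$-chordality of $G_2$ given by Theorem \ref{t:BS_chordal} to each of the other seven groups. There is essentially no obstacle here; the ``hard part'', such as it is, is only to make explicit this remark — that the proof of Proposition \ref{p:BS_eq} actually yields graph isomorphisms and that $k$-chordality is a graph-isomorphism invariant — so that the reduction to $G_2$ is rigorous even though the cited proposition is stated for a formally stronger property.
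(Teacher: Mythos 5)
Your proposal is correct and takes essentially the same route as the paper, whose entire proof is the one-line citation of Proposition \ref{p:BS_eq} together with Theorem \ref{t:BS_chordal}. Your extra observation --- that the proposition is stated for $\varepsilon$-dense $(k,m)$-chordality while the corollary concerns plain $6$-chordality, so the reduction really rests on the graph isomorphisms implicit in the proposition's proof rather than on the proposition as a black box --- is a gap the paper glosses over, and it is worth spelling out since $k$-chordality (unlike $\varepsilon$-dense $(k,m)$-chordality, which characterizes being quasi-isometric to a tree) is not a quasi-isometry invariant.
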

	
	\begin{lemma}\label{l:geodesic}Let $ n>2 $ and $ x,y \in V(\Gamma _n) $. If there exists $ k,k'\in\NN $ such that $ \tilde{x} = xb^k $, $ \tilde{y} = yb^{k'} $ and $ \tilde{y} = \tilde{x}a $, then there is a geodesic $ \sigma $ between $x$ and $y$ such that $ \tilde{x}, \tilde{y} \in \sigma $.
	\end{lemma}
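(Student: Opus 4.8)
The plan is to reduce the lemma to proving $d_{\Gamma_n}(e,b^k a b^{-k'})=k+k'+1$ for $n>2$. From the hypotheses $yb^{k'}=\tilde y=\tilde x a=xb^k a$, so $x^{-1}y=b^k a b^{-k'}$; since left translation is an isometry of $\Gamma_n$ we may assume $x=e$, whence $\tilde x=b^k$, $\tilde y=b^k a$, $y=b^k a b^{-k'}$. The path $e,b,b^2,\dots,b^k,\,b^k a,\,b^k a b^{-1},\dots,b^k a b^{-k'}$ has length $k+1+k'$ and passes through $b^k=\tilde x$ and $b^k a=\tilde y$, so as soon as $d_{\Gamma_n}(e,b^k a b^{-k'})=k+1+k'$ is established this path is a geodesic through $\tilde x$ and $\tilde y$ and the lemma follows. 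The inequality $\le$ is witnessed by the word $b^k a b^{-k'}$ itself, so only the lower bound requires work.

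For the lower bound I would use the standard identification $G_n\cong\ZZ[1/n]\rtimes\ZZ$ with $a\leftrightarrow(1,0)$, $b\leftrightarrow(0,1)$ and $\ZZ$ acting on $\ZZ[1/n]$ by multiplication by $n$; the $\ZZ$-coordinate is exactly the height $h_n$, and $b^k a b^{-k'}\leftrightarrow(n^k,k-k')$. Given any word $w=s_1\cdots s_N$ over $\{a^{\pm1},b^{\pm1}\}$ representing $(n^k,k-k')$, follow the path $g_0=e,\ g_i=g_{i-1}s_i$ and let $h_{i-1}:=h_n(g_{i-1})$ be the height at which the $i$-th letter is read; write $B$ for the number of $b^{\pm1}$-letters and $A$ for the set of $a^{\pm1}$-letters, so $N=B+|A|$. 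The multiplication law yields two facts. Horizontally: $n^k=\sum_{j\in A}\varepsilon_j n^{h_j}$, where $\varepsilon_j=\pm1$ and $h_j$ are the sign and height of the $j$-th $a$-letter; hence $A\ne\emptyset$, and with $H:=\max_{j\in A}h_j$ we get $n^k\le |A|\,n^{H}$, i.e.\ $|A|\ge n^{k-H}$. Vertically: the sequence $0=h_0,h_1,\dots,h_N=k-k'$ changes by $1$ at each $b^{\pm1}$-letter and is constant at each $a^{\pm1}$-letter, and since it starts at $0$, visits $H$, and ends at $k-k'$, we get $B\ge|H|+|H-(k-k')|$.

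It remains to deduce $N=B+|A|\ge k+k'+1$, and this is the step where $n>2$ is used. If $H>k$ then $H\ge\max(0,k-k')$, so $B\ge 2H-(k-k')\ge k+k'+2$ and we are done. If $H\le k$, set $t:=k-H\ge0$; then $|A|\ge n^{t}$ and $B\ge|k-t|+|k'-t|=:\phi(t)$, so it is enough to verify $\phi(t)+n^{t}\ge k+k'+1$ for every integer $t\ge0$. At $t=0$ this is the equality $\phi(0)+1=k+k'+1$. For $t\ge1$, the map $\phi$ is convex with slopes in $\{-2,0,2\}$, so $\phi(0)-\phi(t)\le 2t$, while $n^{t}\ge 3^{t}\ge 2t+1$ because $n\ge3$; hence $\phi(t)+n^{t}\ge\phi(0)+1=k+k'+1$. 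The real content, and the main obstacle, is that (i) and (ii) are the sharp constraints — they rule out any saving from spreading the $a$-letters over many low levels while oscillating the height — and that $3^{t}\ge 2t+1$ (but not $2^{t}\ge 2t+1$) holds; this is exactly why the statement fails for $n=2$, where e.g.\ $b^k a b^{-k}=a^{n^k}$ is represented by $b^{k-1}a^{2}b^{-(k-1)}$ of length $2k<2k+1$. The argument also gives the precise value $d_{\Gamma_n}(e,b^k a b^{-k'})=k+k'+1$.
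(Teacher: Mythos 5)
Your proof is correct and follows essentially the same strategy as the paper's: exhibit the path $b^k a b^{-k'}$ of length $k+k'+1$ and show any competing word is at least as long by comparing the maximal height it reaches with that of this path, the key point in both arguments being that staying $t$ levels lower forces at least $n^{t}\geq 2t+1$ horizontal edges once $n\geq 3$. The only difference is presentational: you make the horizontal-cost bound $|A|\geq n^{k-H}$ rigorous via the coordinates $\ZZ[1/n]\rtimes\ZZ$, whereas the paper asserts the corresponding estimate $H(\sigma')\geq n^{h-h'}$ directly on the Cayley complex.
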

	
	\begin{proof}Let us denote $\sigma$ the path between $x$ and $y$ defined by $b^k a b^{-k'}$. Then $ L(\sigma) = k + k' + 1 $. Let $\sigma '$ be other path between $x$ and $y$. Denote $ h = \max\{ h_n (x) : x\in\sigma \} $ and $ h' = \max\{ h_n (x') : x'\in\sigma' \} $. 
		
		If $ h' \geqslant h $, then $ L(\sigma ') \geqslant h'- h_n (x) + h'- h_n (y) + H(\sigma ') \geqslant k + k' + 1 = L(\sigma) $. 
		
		Suppose otherwise $h' < h$, then $ n^{h-h'} + 2h' - h_n (x) - h_n(y) \leqslant L(\sigma ')  $. Since $n>2$, then $ 2x + 1 \leqslant n^x $ for all $x\geqslant 1$. Therefore $ 2(h-h') + 1 \leqslant n^{h-h'} $. \[ \begin{array}{c}
			2h + 1 \leqslant n^{h-h'} + 2h' \\ 
			2h -h_n (x) - h_n (y) + 1 \leqslant n^{h-h'} + 2h' -h_n (x) - h_n (y)\\ 
			L(\sigma) \leqslant n^{h-h'} + 2h' -h_n (x) - h_n (y) \leqslant L(\sigma ')\\ 
			L(\sigma) \leqslant L(\sigma ')
		\end{array}  \]
	\end{proof}
	
	\begin{theorem}\label{t:BS_no_chordal}$G_n$ is not $k$-chordal with respect to $\{a,b\}$ for every $n > 2$, $k>0$.
	\end{theorem}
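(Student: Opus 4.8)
The plan is to show that for every $k\ge 1$ there is a cycle $\gamma_k$ in $\Gamma_n$ of length $4k+4$ admitting no shortcut; since $4k+4$ is unbounded, this means that for no $K$ is every cycle of length $\ge K$ equipped with a shortcut, i.e. $G_n$ is not $K$-chordal for any $K$. I would take $\gamma_k$ to be the closed path spelled by
\[w_k:=b^k a b^{-k}\;a\;b^k a^{-1}b^{-k}\;a^{-1}.\]
Since $bab^{-1}=_{G_n}a^n$ gives $b^kab^{-k}=_{G_n}a^{n^k}$ and $b^k a^{-1}b^{-k}=_{G_n}a^{-n^k}$, we get $w_k=_{G_n}a^{n^k}\cdot a\cdot a^{-n^k}\cdot a^{-1}=e$, so $\gamma_k$ is a cycle; its $4k+4$ vertices lie on four ``legs'' with first coordinates $0,n^k,n^k+1,1$ (distinct for $k\ge1$ since $n\ge3$) and distinct heights within each leg, so $\gamma_k$ is simple. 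Geometrically $\gamma_k$ is built from two ``staples'' $b^kab^{-k}$ — one from $e$ to $a^{n^k}$, one based at $a$ from $a$ to $a^{n^k+1}$ — glued by the two $a$-edges $a^{n^k}\!-\!a^{n^k+1}$ and $a\!-\!e$.

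The reduction I would use is: \emph{$\gamma_k$ has no shortcut provided every sub-arc $\alpha\subseteq\gamma_k$ with $L(\alpha)\le 2k+2$ is a geodesic of $\Gamma_n$}. Indeed, for any two vertices $p,q$ of $\gamma_k$, one of the two arcs of $\gamma_k$ joining them has length $d_{\gamma_k}(p,q)\le\frac12 L(\gamma_k)=2k+2$; if that arc is a geodesic then $d_{\Gamma_n}(p,q)=d_{\gamma_k}(p,q)$, so no path from $p$ to $q$ can be strictly shorter than $d_{\gamma_k}(p,q)$, i.e. no shortcut exists (the case of non-vertex $p,q$ follows since the graph metric extends the vertex metric).

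To establish the geodesic claim I would split into cases by how many of the four $a$-edges of $\gamma_k$ the arc $\alpha$ meets. These $a$-edges are equally spaced around $\gamma_k$ at cyclic distance $k+1$, so an $\alpha$ of length $\le 2k+2$ meets at most two of them, and, up to the automorphism $a\mapsto a^{-1}$ of $G_n$ and reversal, there are three cases. If $\alpha$ meets no $a$-edge it is a purely vertical segment, hence geodesic. If $\alpha$ meets one $a$-edge lying at the top of a staple (height $k$), then $\alpha$ spells $b^{j_1}ab^{-j_2}$ with $0\le j_1,j_2\le k$, and Lemma \ref{l:geodesic}, applied with $\tilde x,\tilde y$ the endpoints of that $a$-edge, produces exactly this path as a geodesic — this is the place where $n>2$ is essential. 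If $\alpha$ meets one $a$-edge at height $0$, it spells $b^{-j_1}ab^{j_2}$; its endpoints differ by an element of $\ZZ[\tfrac1n]\rtimes\ZZ$ whose first coordinate lies in $n^{-j_1}\ZZ\smallsetminus n^{-j_1+1}\ZZ$, which forces every connecting path to descend to height $\le-j_1$, and a length count then yields $d_{\Gamma_n}=j_1+1+j_2=L(\alpha)$. Finally, if $\alpha$ meets two $a$-edges they are consecutive ones — one at height $0$, one at height $k$ — so $\alpha$ spells $b^{\pm j_1}a^{\pm1}b^{\mp k}a^{\pm1}b^{\pm j_2}$; here a connecting path must once reach a height near $\pm j_1$ (for the low-denominator part) and once reach the ``far'' height near $\pm k$ (for the large part), and since an $a$-move below height $k$ changes the first coordinate by at most $n^{k-1}$ while $n\ge3$, a low-altitude detour costs strictly more than going over the top; a length estimate again identifies $L(\alpha)$ with $d_{\Gamma_n}$ of the endpoints of $\alpha$.

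The hard part is precisely this last computational step — checking by hand that none of these staple- and L-shaped arcs can be shortened in $BS(1,n)$ — and it genuinely uses $n>2$ both through Lemma \ref{l:geodesic} (for the height-$k$ $a$-edges) and through $n^{k}>2k+\mathrm{const}$ (making the over-the-top route optimal) for the rest. The threshold $|n|<3$ versus $|n|\ge3$ is already visible here: when $n=2$ the top staple is not a geodesic, as $b^kab^{-k}=a^{2^k}=b^{k-1}a^2b^{-(k-1)}$ is shorter by one, so $\gamma_k$ acquires shortcuts — consistent with $G_2$ being $6$-chordal by Theorem \ref{t:BS_chordal}.
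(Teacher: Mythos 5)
Your construction is essentially the paper's own proof: your cycle $\gamma_k$ is (up to cyclic rotation of the defining word) the same two-staple cycle $\gamma_N$ used there, your reduction to checking that every half-length arc is a geodesic matches the paper's leg-by-leg case analysis, and your use of Lemma \ref{l:geodesic} for the height-$k$ $a$-edges together with the inequality $n^x\geqslant 2x+1$ (valid precisely for $n\geqslant 3$) for ruling out low-altitude detours is exactly how the paper disposes of its Cases 2--4. The approach is correct; the only difference is that you leave the final horizontal-versus-vertical length count as a sketch where the paper writes out the explicit bound $n^{N-h}+1+H\leqslant L(\sigma)$.
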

	
	\begin{proof}Fix $N > 1$. Let us define the cycle $\gamma _N$ by its vertices as follow. $ x_0 = e $, $ x_1 = a $ , $x_2 = ab$, ..., $x_{N+1} = ab^N$, $x_{N+2} = ab^N a$, $x_{N+3} = ab^N a b^{-1}$, ..., $x_{2N+2} = ab^N a b^{-N}$, $x_{2N+3} = ab^N a b^{-N} a^{-1}$, $x_{2N+4} = ab^N a b^{-N} a^{-1} b$, ..., $x_{3N+3} = ab^N a b^{-N} a^{-1} b^N$, $x_{3N+4} = ab^N a b^{-N} a^{-1} b^N a^{-1}$, $x_{3N+5} = ab^N a b^{-N} a^{-1} b^N a^{-1} b^{-1}$, ..., $x_{4N+4} = ab^N a b^{-N} a^{-1} b^N a^{-1} b^{-N} = x_0 $. See Figure \ref{Fig:counterexample}.
		
		\begin{figure}[h]
			\centering
			\includegraphics[scale=1]{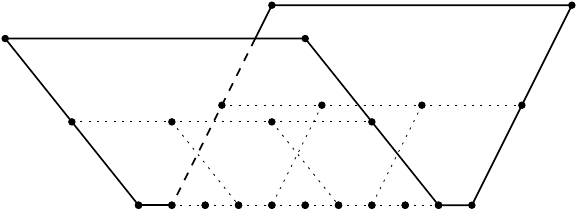}
			\caption{Example of $\gamma _2$ in $G_3$}
			\label{Fig:counterexample}
		\end{figure}
		
		Claim. $ \gamma _N $ has not shortcuts.
		
		Let $x_i,x_j\in V(\gamma)$. Denote $ L_1 = \{ 1 , ... , N+1 \} $, $ L_2 = \{ N+2, ... , 2N+2 \} $, $L_3 = \{ 2N + 3 , ... , 3N + 3 \}$, $ L_4 = \{ 3N + 4 , ... , 4N + 4 \} $, $O = \pi _n (e)$.
		
		Case 1. Suppose $ i,j\in L_m $ for $m\in\{1,2,3,4\}$. Since $ p_O(x_i) = p_O(x_j)  $, then $ \gamma _N $ defines a vertical geodesic between $x_i$ and $x_j$. Therefore, there is not a shortcut between $x_i$ and $x_j$.
		
		Case 2. Suppose $ i\in L_1 $, $ j\in L_2 $ (or $ i\in L_3 $, $ j\in L_4 $). By Lemma \ref{l:geodesic}, $ \gamma _N $ defines a geodesic between $ x_i $ and $x_j$ and therefore there is not a shortcut between $x_i$ and $x_j$.
		
		Case 3. Suppose $ i\in L_1 $, $ j\in L_3 $ (or $ i\in L_2 $, $ j\in L_4 $). Let $\sigma$ be a path between $x_i$ and $x_j$. Suppose without loss of generality that $h_n(x_i) \geqslant h_n(x_j)$. Denote $ h = \max\{ h_n (x) : x\in\sigma \} $. 
		
		If $ h \geqslant N $, then $L(\sigma) \geqslant h_n(x_j) + h + (h-h_n(x_i)) + H(\sigma) \geqslant 2h + 2 + h_n (x_j) - h_n(x_i) \geqslant 2N + 2 + h_n (x_j) - h_n(x_i) = d_{ \gamma_N } (x_i , x_j) $. 
		
		Otherwise, let us assume $h < N$. On one side $ n^{N-h} + 1 \leqslant H (\sigma) $, on the other hand the number of edges in $\sigma$ labelled by $b$ is at least $ H = h_n (x_i) + h_n(x_j) + 2 (h - h_n (x_i) ) $, therefore $ n^{N-h} + 1 + H  \leqslant L(\sigma)  $. Since $n>2$, then $ 2x + 1 \leqslant n^x $ for all $x\geqslant 1$. Therefore $ 2(N-h) + 1 \leqslant n^{N-h} $. 
		
		\[ \begin{array}{c}
			2(N-h) + 2 + H  \leqslant n^{N-h} + 1 + H \\ 
			d_{ \gamma_N } (x_i , x_j) = 2(N-h) + 2 + H \leqslant n^{N-h} + 1 + H  \leqslant L(\sigma )\\ 
			d_{ \gamma_N } (x_i , x_j) \leqslant L(\sigma )
		\end{array}  \]
		
		Thus, there is not a shortcut between $x_i$ and $x_j$.
		
		Case 4. Suppose $i\in L_1, j\in L_4$ (or $i\in L_2, j\in L_3$). In this case, if $\sigma$ is a path between $x_i$ and $x_j$, then $H(\sigma) > 0$ and $ L(\sigma) \geqslant h_n (x_i) + h_n(x_j) + H(\sigma) \geqslant h_n (x_i) + h_n(x_j) + 1 = d_{ \gamma_N } (x_i , x_j) $
		
		Therefore, $ \sigma $ is not a shortcut, and the proof of the claim is finished.
		
		Suppose that $G_n$ is $k$-chordal for some $k>0$. Then $\gamma_k$ has a shortcut, which contradicts the claim.
		
	\end{proof}
	
	\section{Word problem}\label{word}

	Let $S$ be a finite set, $F(S)$ the free group generated by $S$ and $R\subseteq F(S)$ a finite set. We consider a finite group presentation $ G = \left\langle S | R \right\rangle $. We will denote $a=_H b$ the equality $a=b$ in the group $H$, for example, two different relators $r_1,r_2 \in R$ are equal in $G$, i.e., $r_1 =_G r_2$, but they are not equal in $ F(S) $, i.e., $ a \neq _{F(S)} b$. Let $\omega\in F(S)$ be such that $ \omega =_G e $. The \emph{area} of $\omega$ is defined as follows: $$ A(\omega) = \min\{ m \,|\, \omega =_{F(S)} u_1 r_1 ^{\varepsilon _1} u_1 ^{-1} \cdots u_m r_m ^{\varepsilon _m} u_m ^{-1} ,\, u_i \in F(S) ,\, r_i \in R,\, \varepsilon\in\{ -1,1 \} \} $$
	
	The \emph{Dehn function} of the presentation $ \left\langle S | R \right\rangle $ is defined as follow: $$ \text{Dehn}(n) = \max\{ A(\omega) \,|\, \omega \in \left\langle \left\langle R \right\rangle \right\rangle  ,\, |\omega| \leqslant n \}, $$
	where $\left\langle \left\langle R \right\rangle \right\rangle=\{\prod_{i=1}^M u_ir_i^{\varepsilon_i} u_i^{-1} \, | \,  M\in \mathbb{N}, \, u_i\in F(S), \, r_i\in R, \,  \varepsilon\in\{ -1,1 \}\}$. See \cite{Sh}.
	
	A function $f:\,\NN \longrightarrow \RR$ is an \emph{isoperimetric function} for the presentation $ \left\langle S | R \right\rangle $ if it is a non-decreasing function such that $ \text{Dehn} \leqslant f $. 
	
	\begin{theorem}[Theorem 1.1, \cite{BRS}]\label{t:iso_recursive} A finite presentation has a solvable word problem if and only if it admits a recursive isoperimetric function.
	\end{theorem}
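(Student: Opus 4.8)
The plan is to prove the two implications of the equivalence separately, and both hinge on a single geometric bookkeeping lemma: if $w \in \langle\langle R \rangle\rangle$ has $A(w) = N$, then one can realize $w$ as a product $w =_{F(S)} \prod_{i=1}^{N} u_i r_i^{\varepsilon_i} u_i^{-1}$ in which every conjugator satisfies $|u_i| \leq |w| + N L$, where $L = \max_{r \in R} |r|$. I would establish this by passing to a van Kampen diagram $\Delta$ for $w$ with exactly $N$ two-cells: the number of edges of $\Delta$ is at most $|w| + N L$ (the reduced boundary contributes $|w|$ edges and each of the $N$ cells contributes at most $L$ edges to the $1$-skeleton), so a path in the $1$-skeleton from the basepoint to a chosen cell — whose label is the conjugator $u_i$ — has length bounded by this crude edge count. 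This lemma is precisely what converts the two a priori infinite searches below into terminating ones.

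For the direction ``recursive isoperimetric function $\Rightarrow$ solvable word problem'', suppose $f$ is recursive with $\text{Dehn} \leq f$. Given an input word $w$ with $|w| = n$, the algorithm first computes $N := f(n)$ and the bound $B := n + N L$. If $w =_G e$ then $A(w) \leq \text{Dehn}(n) \leq N$, so by the lemma $w$ equals in $F(S)$ some product of at most $N$ conjugates $u_i r_i^{\varepsilon_i} u_i^{-1}$ with all $|u_i| \leq B$. There are only finitely many such products, so the algorithm enumerates them all, freely reduces each, and tests equality with the free reduction of $w$; it answers ``$w =_G e$'' if a match is found and ``$w \neq_G e$'' otherwise. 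Correctness is clear: a match certifies $w \in \langle\langle R \rangle\rangle$, while if $w \notin \langle\langle R \rangle\rangle$ no product of conjugates of relators can equal $w$, so the bounded search necessarily fails.

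For the converse, suppose the word problem is solvable; I would show that $\text{Dehn}$ is itself recursive, which suffices since it is non-decreasing and bounds itself. To evaluate $\text{Dehn}(n)$ I enumerate the finitely many words $w$ with $|w| \leq n$, discard those with $w \neq_G e$ (decidable by hypothesis), compute $A(w)$ for the rest, and take the maximum. To compute $A(w)$ for a word known to lie in $\langle\langle R \rangle\rangle$, run $N = 0, 1, 2, \ldots$ and at stage $N$ search the finite set of products of at most $N$ conjugates with conjugators of length $\leq |w| + N L$ for one freely equal to $w$. By the lemma the first $N$ at which this search succeeds is exactly $A(w)$: no expression of area below $A(w)$ exists at all, while an expression of area $A(w)$ with conjugators within the stated bound is guaranteed to exist. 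Hence the procedure halts and returns $A(w)$, so $\text{Dehn}$ is computable.

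The main obstacle is the conjugator-length lemma of the first paragraph; once the lengths of the conjugating elements are controlled by the area together with the word length, every search space becomes finite and both algorithms terminate. The remaining work is routine enumeration over free words and free reduction, together with the observation that $\text{Dehn}(n)$ is a maximum taken over the finite, effectively listable set of identity words of length at most $n$.
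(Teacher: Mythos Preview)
The paper does not prove this theorem at all: it is quoted verbatim as Theorem~1.1 of \cite{BRS} and used as a black box in the proof of Theorem~\ref{t:word_problem}. There is therefore no ``paper's own proof'' to compare against.

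That said, your argument is the standard one and is essentially correct. The conjugator-length lemma is indeed the crux, and your van Kampen diagram justification is the right idea; the only point to tighten is the edge count. A diagram with $N$ faces, each of boundary length at most $L$, need not have \emph{exactly} $|w|+NL$ edges, but $|w|+NL$ is certainly an upper bound on the length of any embedded path in the $1$-skeleton (every edge lies on the boundary of the diagram or on the boundary of some $2$-cell), and that is all you use. With that bound in hand, both directions reduce to finite searches exactly as you describe. This is the argument one finds in \cite{BRS} and in standard references such as \cite{BH}, so your proposal matches the literature even though the present paper offers nothing to compare it with.
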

	
	\begin{theorem}\label{t:word_problem}Let $ G = \left\langle S | R \right\rangle $ be a finite group presentation. If $G$ is $k$-chordal, then $\left\langle S | R \right\rangle$ admits a recursive isoperimetric function, i.e., $\left\langle S | R \right\rangle$ has a solvable word problem.
	\end{theorem}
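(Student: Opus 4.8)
The plan is to invoke Theorem~\ref{t:iso_recursive}: it suffices to produce a recursive, non-decreasing function $f\colon\NN\to\RR$ with $\text{Dehn}\le f$. I will build $f$ by a divide-and-conquer recursion on word length, driven by the shortcuts furnished by $k$-chordality, and in the process show that $f$ may be taken of the form $f(n)=c\,2^{n-k}$ (up to adjusting the constant), which is patently recursive. Throughout I work with the Cayley graph $\Gamma=Cay(G,S)$, which is $k$-chordal since $G$ is (Definition~\ref{d:k,m-chordal_group} and the remark following it); we may assume $k\ge 3$, because a $k$-chordal graph is $k'$-chordal for every $k'\ge k$.

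First I would fix the base constant $c:=\max\{A(\omega)\colon \omega=_G e,\ |\omega|<k\}$, which is finite because $S$ is finite, hence there are only finitely many such words, each lying in $\langle\langle R\rangle\rangle$ and so having finite area. Set $f(n)=c$ for $n<k$ and $f(n)=2f(n-1)$ for $n\ge k$, i.e.\ $f(n)=c\,2^{n-k+1}$ for $n\ge k$; this $f$ is non-decreasing and (primitive) recursive. The heart of the argument is the claim that every word $\omega$ over $S^{\pm1}$ with $\omega=_G e$ and $|\omega|=\ell$ satisfies $A(\omega)\le f(\ell)$. I would prove this by strong induction on $\ell$, using freely that the area function on $F(S)$ is subadditive under concatenation, invariant under conjugation, and invariant under free reduction. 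The base case $\ell<k$ is immediate from the definition of $c$.

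For the inductive step take $|\omega|=\ell\ge k$. If $\omega$ is not freely reduced, replace it by its reduction $\omega'$, with $|\omega'|<\ell$ and $A(\omega')=A(\omega)$, and apply the inductive hypothesis. If $\omega$ is reduced but the loop $g_0,g_1,\dots,g_\ell=g_0$ it traces in $\Gamma$ is not a simple closed curve, pick a repeated vertex $g_p=g_{p'}$ with $(p,p')\neq(0,\ell)$; writing $\omega=_{F(S)}a\,b\,d$ with $b$ the subword between $g_p$ and $g_{p'}$, one has $\omega=_{F(S)}(aba^{-1})(ad)$ with $aba^{-1}$ and $ad$ null-homotopic of lengths $p'-p$ and $\ell-(p'-p)$, both in $[1,\ell-1]$, so $A(\omega)\le A(b)+A(ad)\le 2f(\ell-1)=f(\ell)$ by induction. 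Finally, if the loop is a simple closed curve of length $\ell\ge k\ge 3$, then $s_1\cdots s_\ell=e$ (with $s_t=g_{t-1}^{-1}g_t$) is a simple relation, so by $k$-chordality (Definition~\ref{d:k,m-chordal_group} with $m=\infty$) there are $1\le i<j\le\ell$ and $s_1',\dots,s_r'\in S^{\pm1}$ with $s_i\cdots s_j=_G s_1'\cdots s_r'=:v$ and $r\le\min\{j-i,\ \ell-j+i-2\}$. Writing $\omega=u\,x\,z$ in $F(S)$ with $u=s_1\cdots s_{i-1}$, $x=s_i\cdots s_j$, $z=s_{j+1}\cdots s_\ell$ and inserting $vv^{-1}$, one gets $\omega=_{F(S)}(u\,v\,z)\cdot z^{-1}(v^{-1}x)z$, whence $A(\omega)\le A(uvz)+A(v^{-1}x)$; the two new words are null-homotopic of lengths $(i-1)+r+(\ell-j)\le\ell-1$ and $r+(j-i+1)\le\ell-1$ (the two bounds on $r$ are exactly what forces both pieces to be strictly shorter than $\ell$), so $A(\omega)\le 2f(\ell-1)=f(\ell)$ by induction. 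This proves the claim, and then $\text{Dehn}(n)\le\max_{\ell\le n}f(\ell)=f(n)$, so Theorem~\ref{t:iso_recursive} applies and the word problem is solvable.

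The step I expect to be the main obstacle is the reduction to genuine cycles: $k$-chordality speaks only about simple closed curves (simple relations), so before it can be used one must peel off backtracks and split at repeated vertices, carefully tracking the conjugating elements and verifying that each resulting piece is strictly shorter — the last point relying precisely on the strictness $r\le j-i$ and $r\le\ell-j+i-2$ built into the notion of shortcut. Everything else is routine bookkeeping for the area function, and since each level of the recursion multiplies the bound only by $2$, the resulting isoperimetric function is of the form $f(n)=c\,2^{n-k}$ after absorbing constants, as announced.
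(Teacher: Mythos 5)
Your proof is correct and follows essentially the same route as the paper: both use the $k$-chordality shortcut to split a null-homotopic word of length $n> k$ into two strictly shorter null-homotopic pieces (the two bounds on $r$ providing exactly the needed length drop), deduce $\text{Dehn}(n)\leqslant 2\,\text{Dehn}(n-1)$, and feed the resulting recursive bound $c\,2^{n-k}$ into Theorem~\ref{t:iso_recursive}. The only differences are cosmetic: the paper glues two van Kampen diagrams where you invoke subadditivity and conjugation-invariance of the area directly, and you spell out the reduction to simple relations (free reduction and splitting at repeated vertices) that the paper only sketches.
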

	
	\begin{proof}Suppose that $G$ is k-chordal with respect to $S$. Since $\left\langle S | R \right\rangle$ is a finite presentation, then the set of words with length less than or equal to $k$ is finite, and we can compute $c = Dehn(k) $. We define the following recursive function: 
		
		$$ \left\lbrace \begin{array}{ll}
			f(n) = c & \text{if }  n = 1,\ldots , k \\ 
			f(n) = 2 f(n-1) = c 2^{n-k} & \text{if } n > k
		\end{array}  \right.  $$\\
		
		We claim that $ \text{Dehn}(n) \leqslant 2 \text{Dehn}(n-1) $ if $ n>k $. In fact, let $ \omega \in \left\langle \left\langle R\right\rangle \right\rangle  $ such that $ \text{Dehn}(n) = A(\omega) $ and suppose that $ |\omega| = n > k $. 
		
		Suppose $ \omega = s_1 \cdots s_n = e $ is a simple relation for some $s_1,\ldots , s_n \in S^{\pm 1}$. Since $G$ is $k$-chordal, then there exist $s_1 ' , \ldots , s_r ' \in S^{\pm 1}$ be such that $ s_i \cdots s_j = s_1 '  \cdots  s_r ' $ with $r \leqslant \min\{ j-i, n-j+i-2 \} $ for some $ 1\leqslant i < j \leqslant n $. We can split $\bar{\omega} = \omega _1 \omega _2$ where $ \omega _1 = s_{i+1} \cdots s_j s_r'^{-1} \cdots s_1'^{-1} $, $ \omega _2 = s_1' \cdots s_r' s_{j+1} \cdots s_n s_1 \cdots s_i $ and $\bar{\omega} $ is a cyclic conjugate of $\omega$. Suppose that $D_1$ and $D_2$ define a van Kampen diagram for $\omega_1$ and $\omega_2$ over $G$ (with base point $ g = s_1\cdots s_i$) respectively, see Figure \ref{Fig:D1_D2}.
		
		\begin{figure}[h]
			\centering
			\includegraphics[scale=0.5]{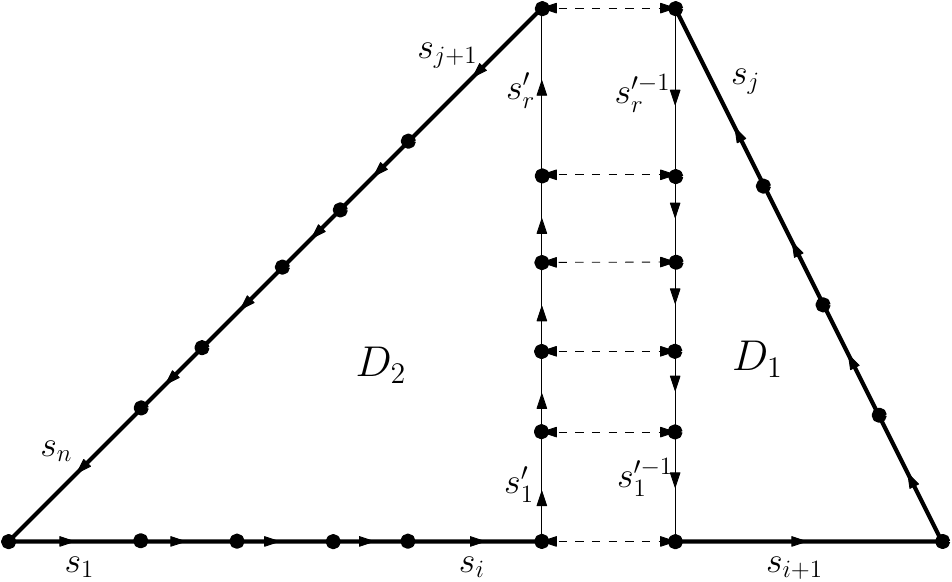}
			\caption{Van Kampen diagram for $\omega_1$ at right and Van Kampen diagram for $\omega_2$ at left.}
			\label{Fig:D1_D2}
		\end{figure}
		
		If we join $D_1$ and $D_2$ through the common vertices $s_1 ' , \ldots , s_r ' $ (see Figure \ref{Fig:D}), we obtain a van Kampen diagram $D$ for $\bar{\omega} $ over $G$ (with base point $ g $).
		
		\begin{figure}[h]
			\centering
			\includegraphics[scale=0.5]{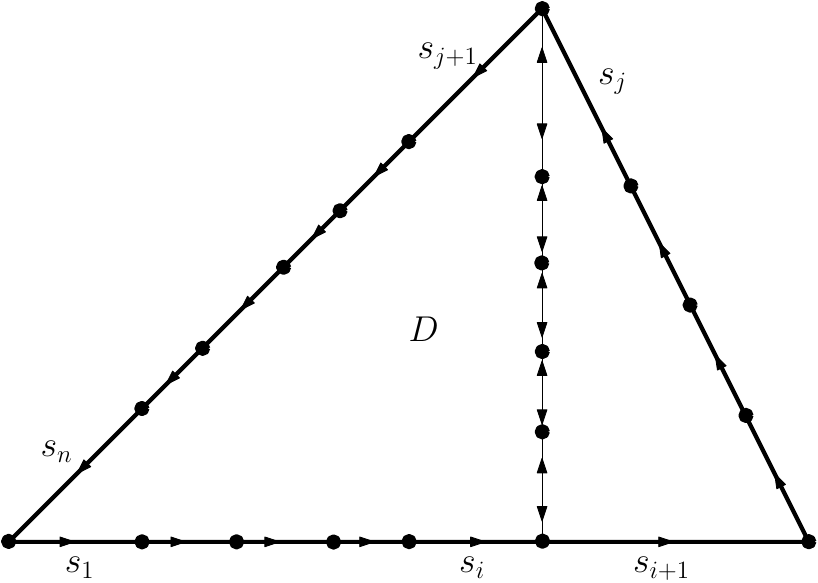}
			\caption{Van Kampen diagram for $\omega$.}
			\label{Fig:D}
		\end{figure}
		
		If $ \omega = e $ is not a simple relation, then we can split $\bar{\omega} = \omega _1 \omega _2$ where $ \omega _1 = e$, $ \omega _2 = e $ and $\bar{\omega} $ is a rotation of $\omega$.
		
		Thus $ \text{Dehn}(n) = A(\omega) = A(\bar{\omega}) \leqslant A(\omega_1) + A(\omega_2) \leqslant \text{Dehn}(n-1) + \text{Dehn}(n-1) = 2\text{Dehn}(n-1) $.
		
		By induction, $ \text{Dehn}(n) \leqslant f(n) $ for all $n\in\NN$. We have shown that $ \left\langle S | R \right\rangle $ admits a recursive isoperimetric function, and by Theorem \ref{t:iso_recursive}, the word problem is solvable.
	\end{proof}

	It is well known that finitely generated free abelian groups have solvable word problem. For the particular case of the group $\ZZ ^2$, since it is 5-chordal (Proposition \ref{p:Z_chordal}), it follows also from Theorem \ref{t:word_problem} that $\ZZ ^2$ has solvable word problem.
	
	We have seen in Section \ref{BS} that $G_2$ is 6-chordal (Theorem \ref{t:BS_chordal}). Therefore, by Theorem \ref{t:word_problem}: 
	
	\begin{corollary} The Baumslag-Solitar group $ G_2$ has solvable word problem.
	\end{corollary}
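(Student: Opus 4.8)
The plan is to deduce the corollary immediately from the two results already at hand, so this is really a bookkeeping step rather than a new argument. First I would recall that $G_2 = BS(1,2) = \langle a,b \mid bab^{-1}=a^2\rangle$ is a finite presentation, and that Theorem~\ref{t:BS_chordal} establishes that $G_2$ is $6$-chordal with respect to the generating set $S=\{a,b\}$. Being $6$-chordal is by definition the case $k=6$ of being $k$-chordal, so the hypothesis of Theorem~\ref{t:word_problem} is met.

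Next I would apply Theorem~\ref{t:word_problem} to the presentation $\langle a,b \mid bab^{-1}=a^2\rangle$: since this presentation is finite and $G_2$ is $6$-chordal, the presentation admits a recursive isoperimetric function. Concretely, following the construction in the proof of that theorem, one may take $f(n)=c$ for $1\leqslant n\leqslant 6$ and $f(n)=c\,2^{\,n-6}$ for $n>6$, where $c=\text{Dehn}(6)$ is computable because there are only finitely many words of length at most $6$ over $S^{\pm 1}$. By Theorem~\ref{t:iso_recursive}, the existence of a recursive isoperimetric function is equivalent to solvability of the word problem, and the corollary follows.

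I do not expect any genuine obstacle at this stage: all the difficulty is upstream, in the proof of Theorem~\ref{t:BS_chordal}. If one instead wanted a fully self-contained proof of this corollary, the hard part would be precisely re-deriving the $6$-chordality of $G_2$, that is, redoing the case analysis on cycles $\gamma$ in $\Gamma_2$ (separating the situation where a "corner" $x_1b x_2 a x_3 b^{-1}$ appears from the situation controlled by Lemma~\ref{l:heigth}). Granting Theorems~\ref{t:BS_chordal}, \ref{t:word_problem} and \ref{t:iso_recursive}, however, the statement is a one-line consequence.
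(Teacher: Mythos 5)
Your proposal is correct and matches the paper exactly: the corollary is stated there as an immediate consequence of Theorem~\ref{t:BS_chordal} ($G_2$ is $6$-chordal) combined with Theorem~\ref{t:word_problem}, with no additional argument. Your extra remarks about the explicit bound $f(n)=c\,2^{\,n-6}$ and the computability of $c=\text{Dehn}(6)$ are consistent with Remark~\ref{r:cota} and add nothing that conflicts with the paper.
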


	Given two functions $f,g:\, \NN \longrightarrow \RR$, we will denote $ f \preccurlyeq g $ if there exists $C>0$ such that $f(n) \leqslant Cg(Cn+C) + Cn + C$ for all $n\in\NN$. This gives an equivalence relation $f \approx g \Longleftrightarrow f \preccurlyeq g $ and $ g \preccurlyeq f $. We denote the exponential function in base $m$ as $ \exp _m (n) = m^n $. This simplifies the notation in some cases, for example, $ exp_m ^k (n) = m ^{m^{\dots ^{m^n}}} $.
	
	\begin{remark}\label{r:cota}Let $ G = \left\langle S | R \right\rangle $ be a finite group presentation. If $G$ is $k$-chordal and $c = Dehn(k) $, then the Dehn function is bounded by the following function:
		$$ \left\lbrace \begin{array}{ll}
			f(n) = c & \text{if }  n = 1,\ldots , k \\ 
			f(n) = 2 f(n-1) = c 2^{n-k} & \text{if } n > k
		\end{array}  \right.  $$
		Thus, if $ G $ is $k$-chordal, then $Dehn \preccurlyeq \exp _2 $.
	\end{remark}
	
	This idea provides us with a method to detect groups with solvable word problem that are not  $k$-chordal for any $k\in\NN$.
	
	\begin{example}\label{e:hydra}We will consider the group $$\Gamma _k = \left\langle a_1 , \ldots , a_k , t , p \, \mid \, t^{-1} a_i t = a_i a_{i-1}  ,\, [t,a_1] = 1 ,\, [p,a_j t] = 1, \, i>1,\, j>0 \right\rangle $$ In \cite{DR}, the authors prove that $ Dehn \approx A_k $, where $ A_3 (n) = \exp _2 ^n (1) $ and $ A_{m+1} \geqslant A_m $. By Theorem \ref{t:iso_recursive}, $\Gamma _k$ has a solvable word problem. However, if $\Gamma _k$ were $l$-chordal, by  Remark \ref{r:cota}, then $Dehn \preccurlyeq \exp _2 $, which is a contradiction for $k\geqslant 3$. Thus, we have shown that $ \Gamma _k $ is not chordal for $k\geqslant 3$.
	\end{example}

	\section{Acknowledgments}
	The authors are very grateful to Yago Antol\'in for his helpful comments and suggestions.

\end{document}